\pgfplotsset{compat=1.18} 
\newtheorem{theorem}{Theorem}
\newtheorem{proposition}[theorem]{Proposition}
\newtheorem{corollary}[theorem]{Corollary}
\theoremstyle{definition}
\newtheorem{lemma}[theorem]{Lemma}
\theoremstyle{remark}
\newtheorem{remark}[theorem]{Remark}
\numberwithin{theorem}{section}
\numberwithin{equation}{section}
\numberwithin{table}{section}
\numberwithin{figure}{section}
\definecolor{myBlue}{RGB}{113,104,238} 
\definecolor{myGreen}{RGB}{154,205,50} 
\definecolor{myGreen2}{RGB}{114,175,30} 
\definecolor{myRed}{RGB}{180,50,50}  
\definecolor{myOrange}{RGB}{225,92,22} 
\definecolor{lgray}{RGB}{200,200,200} 
\definecolor{llgray}{RGB}{155,155,155} 
\definecolor{alphaCol}{RGB}{230,190,50} 
\definecolor{betaCol}{RGB}{90,80,180} 
\definecolor{mycolor1}{rgb}{0.00000,0.44700,0.74100}
\definecolor{mycolor1_}{rgb}{0.00000,0.67400,0.87050}
\definecolor{mycolor2}{rgb}{0.85000,0.32500,0.09800}
\definecolor{mycolor2_}{rgb}{0.95000,0.64000,0.12900}
\definecolor{mycolor3}{rgb}{0.92900,0.69400,0.12500}
\definecolor{mycolor4}{rgb}{0.49400,0.18400,0.55600}
\definecolor{mycolor5}{rgb}{0.46600,0.67400,0.18800}
\definecolor{mycolor6}{rgb}{0.30100,0.74500,0.93300}
\definecolor{mycolor7}{rgb}{0.63500,0.07800,0.18400}%
\definecolor{corrRed}{RGB}{18,124,175} 
\newcommand\R{\mathbb R}
\newcommand\Sphere{\mathcal{S}}
\DeclareMathAlphabet{\mathpzc}{OT1}{pzc}{m}{it}
\newcommand\OB{\mathcal{O}\mathcal{B}_\Nbf(p,\Hsp)}
\newcommand\OBn{\mathcal{O}\mathcal{B}_{\Nbf,M}(p,n)}
\newcommand\Dp{\mathcal{D}(p)}
\DeclareMathOperator{\diag}{diag}
\DeclareMathOperator{\ddiag}{ddiag}
\DeclareMathOperator{\grad}{grad}
\DeclareMathOperator{\Hess}{Hess}
\DeclareMathOperator{\Kern}{ker}
\DeclareMathOperator{\tol}{tol}
\DeclareMathOperator{\trace}{trace}
\DeclareMathOperator{\Drm}{D}
\newcommand\calA{\mathcal A}
\newcommand\calB{\mathcal B}
\newcommand\calC{\mathcal C}
\newcommand\calE{\mathcal E}
\newcommand\calF{\mathcal F}
\newcommand\calG{\mathcal G}
\newcommand\calH{\mathcal H}
\newcommand\calL{\mathcal L}
\newcommand\calN{\mathcal N}
\newcommand\calP{\mathcal P}
\newcommand\calR{\mathcal R}
\newcommand\calAhat{\widehat{\calA}}
\newcommand\ahat{\widehat{a}}
\newcommand{\etabf}{{\bm \eta}}
\newcommand{\rhobf}{{\bm \rho}}
\newcommand{\varphibf}{{\bm \varphi}}
\newcommand{\gbf}{\bm{g}}
\newcommand{\ybf}{\bm{y}}
\newcommand{\ubf}{\bm{u}}
\newcommand{\vbf}{\bm{v}}
\newcommand{\wbf}{\bm{w}}
\newcommand{\zbf}{\bm{z}}
\newcommand{\zerobf}{\bm{0}}
\newcommand{\V}{V}
\newcommand{\Hsp}{H}
\newcommand{\Lsp}{L}
\newcommand{\Nbf}{N}
\def\dx{\,\text{d}x}
\newsavebox{\@brx}
\newcommand{\llangle}[1][]{\savebox{\@brx}{\(\m@th{#1\langle}\)}%
  \mathopen{\copy\@brx\mkern2mu\kern-0.9\wd\@brx\usebox{\@brx}}}
\newcommand{\rrangle}[1][]{\savebox{\@brx}{\(\m@th{#1\rangle}\)}%
  \mathclose{\copy\@brx\mkern2mu\kern-0.9\wd\@brx\usebox{\@brx}}}
\newcommand{\out}[2]{\llangle{#1},{#2}\rrangle}
\title[Riemannian Optimisation Methods for Multicomponent BEC]
{Riemannian Optimisation Methods for Ground States of Multicomponent Bose--Einstein Condensates}
\author[R. Altmann]{Robert Altmann}
\address[R. Altmann]{Institute of Analysis and Numerics, Otto von Guericke University Magdeburg, Universit\"atsplatz 2, 39106 Magdeburg, Germany}
\email{robert.altmann@ovgu.de}
\author[M. Hermann]{Martin Hermann}
\address[M. Hermann]{Institute of Mathematics, University of Augsburg, Universit\"atsstra{\ss}e~12a, 86159 Augsburg, Germany}
\email{martin.hermann@uni-a.de}
\author[D. Peterseim]{Daniel Peterseim}
\author[T. Stykel]{Tatjana Stykel}
\address[D. Peterseim, T. Stykel]{Institute of Mathematics \& Centre for Advanced Analytics and Predictive Sciences (CAAPS), University of Augsburg, Universit\"atsstra{\ss}e~12a, 86159 Augsburg, Germany}
\email{daniel.peterseim@uni-a.de}
\email{tatjana.stykel@uni-a.de}
\thanks{The work of M.~Hermann and D.~Peterseim is part of a project that has received funding from the European Research Council (ERC) under the European Union's Horizon 2020 research and innovation programme (Grant agreement No.~865751 -- RandomMultiScales).\\
\indent
This paper will appear in IMA Journal of Numerical Analysis.}
\begin{document}
%
%
\begin{abstract}
	This paper addresses the computation of ground states of multicomponent Bose--Einstein condensates, defined as the global minimiser of an energy functional on an infinite-dimensional ge\-ne\-ra\-lised oblique manifold. We establish the existence of the ground state, prove its uniqueness up to scaling, and characterise it as the solution to a coupled nonlinear eigenvector problem. By equipping the manifold with several Riemannian metrics, we introduce a suite of Riemannian gradient descent and Riemannian Newton methods. Metrics that incorporate first- or second-order information about the energy are particularly advantageous, effectively preconditioning the resulting methods. For a~Riemannian gradient descent method with an~energy-adaptive metric, we provide a qualitative global and quantitative local convergence analysis, confirming its reliability and robustness with respect to the choice of the spatial discretisation. Numerical experiments highlight the computational efficiency of both the Riemannian gradient descent and Newton methods.
\end{abstract}
%
\maketitle
{\tiny {\bf Key words.} Multicomponent Bose--Einstein condensates, coupled Gross--Pitaevskii eigenvalue problem, nonlinear eigenvector problem, Riemannian optimisation, energy-adaptive methods}\\
\indent
{\tiny {\bf AMS subject classifications.}  {\bf 66N25}, {\bf 81Q10}, {\bf 35Q55}}

%
%
%
%
\section{Introduction}
This paper is devoted to the numerical approximation of ground states of multicomponent Bose--Einstein condensates (BECs), where $p$ different species of matter can condense into one single internal state with $p$ components; see~\cite{Bao04,ChaKLS05,CheW03} for an~introduction. For the special case of two species and two components, we refer to~\cite{BaoC11,LinW06} and \cite[Sect.~9.2]{BaoC13}. Multicomponent systems can also be obtained, under some simplifying assumptions, from spinor BECs which provide a~valuable tool for exploring complex quantum phenomena \cite{BaoC18,BaoCZ13}.

Mathematically, a stationary quantum state of a $p$-component condensate can be modelled as a~wave function $\varphibf=(\varphi_1,\ldots,\varphi_p)$, defined in a domain $\Omega \subset \R^d$, $d=1,2,3$. The components of the wave function $\varphibf$ are square integrable with square integrable weak derivatives. Their squared $L^2$-norms represent the masses of the components which are subject to the constraints
\begin{equation}\label{eq:constraints}
\|\varphi_j\|_{L^2(\Omega)}^2 = N_j > 0, \qquad j=1,\ldots,p,
\end{equation}
where $N_j$ is the given number of particles in the $j$-th condensate component, and $N_1+\ldots+N_p$ is the total number of particles of the condensate. The set of admissible quantum states of the condensates thus forms an~infinite-dimensional generalised oblique manifold. Given the finite mass, the wave function is well approximated in a sufficiently large but bounded domain $\Omega$ with homogeneous Dirichlet boundary conditions. Furthermore, the linear combinations $\rho_1(\varphibf),\ldots,\rho_p(\varphibf)$ of the density functions $|\varphi_i|^2$ are given by
\[
\rho_j(\varphibf)=\sum_{i=1}^p \kappa_{ij} |\varphi_i|^2, \qquad j=1,\ldots,p,
\]
where the parameters $\kappa_{ij} \in \R$, $\kappa_{ij} = \kappa_{ji}$, characterise the strength of particle interactions between the $i$-th and $j$-th condensate components. These interactions can be either attractive \mbox{($\kappa_{ij}<0$)} or repulsive \mbox{($\kappa_{ij}>0$)}.  

The energy of the condensate is given by the functional 
\begin{equation}\label{eq:energy}
\calE(\varphibf)
= \sum_{j=1}^p \int_\Omega \frac{1}{2}\, \|\nabla\varphi_j\|^2 
+ \frac{1}{2}\, \V_j(x)\, |\varphi_j|^2 
+ \frac{1}{4}\, \rho_j(\varphibf) \,|\varphi_j|^2 \, \dx,
\end{equation}
where $\V_1, \ldots, \V_p$ denote the external potentials, which can be, for example, harmonic traps or periodic potentials created by optical lattices or realisations of random potentials created by laser speckles. The first term in \eqref{eq:energy} represents the kinetic energy of the $j$-th component, the second term is the potential energy, and the last term characterises the energy resulting from particle interactions. The coupling is encoded in the nonlinearities $\rho_j$ and driven by the parameters $\kappa_{ij}$, whose relative variation can significantly impact the ground state. In the two-component case $p=2$, for example, the parameter 
$$
\Delta_{\text{misc}} = \frac{\kappa_{11}\kappa_{22}}{\kappa_{12}^2} - 1
$$ 
describes the miscibility of the condensate. For $\Delta_{\text{misc}} > 0$, ground states with substantially overlapping components are expected, whereas for $\Delta_{\text{misc}} < 0$, the components are mostly separated in space according to the Gross--Pitaevskii model; see, e.g., \cite{Tri00}. In this paper, we focus on the regime where $\Delta_{\text{misc}} > 0$. 

We are interested in global minimisers of the energy \eqref{eq:energy} subject to the mass constraints \eqref{eq:constraints} on the components. These minimisers are the ground states that represent the most stable configurations of the system.
The existence of a ground state for $p=2$ in the full-space problem has been shown in \cite{BaoC11}. Building on the approach used to establish the existence and uniqueness in the single-component case \cite{CanCM10,LieSY00}, we extend these results to general $p$ and bounded domains under suitable assumptions on the potentials $V_j$ and the interaction matrix $K=[\kappa_{ij}]_{i,j=1}^p$. 

The ground state of a~multicomponent BEC with $p$ interacting components is further cha\-rac\-terised as the solution to the nonlinear eigenvector problem (NLEVP) given by the coupled (dimensionless) Gross--Pitaevskii equations 
\begin{equation}\label{eq:multiBEC}
	-\Delta \varphi_j + \V_j \varphi_j + \rho_j(\varphibf)\varphi_j 
    = \lambda_j\varphi_j, \qquad j=1,\ldots,p,
\end{equation}
where the eigenvalues $\lambda_1,\ldots,\lambda_p \in \R$ represent the chemical potentials of the components. We show that, similar to the single-component case \cite{CanCM10}, each component of the ground state is associated with the eigenfunction corresponding to the smallest eigenvalue for the corresponding condensate component. This relationship is crucial in proving both the uniqueness of the ground state, up to a~global sign change of its components, and a second-order necessary optimality condition.

Various numerical methods have been proposed for computing ground states of the Gross--Pitaevskii equation in the single-component case, based on either energy minimisation or eigenvalue characterisation. These methods include self-consistent field iterations~\cite{CanKL21,DioC07}, discrete normalised gradient flows~\cite{BaoCL06,BaoD04}, (projected) Sobolev gradient methods~\cite{ChenLLZ24,DanK10,GarP01,HenP20,KaE10}, the \mbox{$J$-me}\-thod \cite{AltHP21,JarKM14}, Riemannian optimisation methods in both discrete and continuous settings~\cite{AltPS24,AntLT17,DanP17}, and Newton-type methods~\cite{BaoT03,DuL22,WuWB17}. For more details and an extended overview, we refer to~\cite{HenJ23}.
Although the computation of ground states for multicomponent BECs is also important in practice, it has received significantly less attention. Among the methods proposed for these problems are gradient flow-based approaches~\cite{AntD14,Bao04}, a Newton-like method with an approximate line search strategy~\cite{CalORT09}, a regularised Riemannian Newton method \cite{TiaCWW20}, and more recently Newton-based alternating methods~\cite{HuaY24}.

This paper promotes Riemannian gradient descent and Newton-type methods for multicomponent BECs, emphasising their robustness across different spatial discretisation methods by working within an infinite-dimensional framework. Consequently, the convergence properties of selected optimisation schemes are asymptotically independent when the resolution in the spatial discretisation is increased, such as mesh refinement in finite element methods. In contrast, conventional convergence proofs in Riemannian optimisation for discrete models often lack an explicit dependence on the dimension of the problem, limiting their applicability to discretised partial differential equations where the dimension is critical and increases with increasing spatial resolution. 
Furthermore, we provide a rigorous global convergence analysis and quantify the local convergence behaviour of a~selected Riemannian gradient descent scheme, demonstrating its effectiveness in reliably identifying accurate initial values for empirically more efficient Riemannian Newton methods within sufficiently close neighbourhoods of the ground state.

The paper is organised as follows. Section~\ref{sec:setting} introduces the functional analytical framework for constrained energy minimisation and proves the existence of a ground state. Section~\ref{sec:eigenvector} establishes the connection to NLEVPs and shows the uniqueness of the ground state. In Section~\ref{sec:oblique}, we characterise the tangent and normal spaces of the infinite-dimensional generalised oblique manifold, deriving formulae for Riemannian gradients and Hessians based on the chosen metric. Section~\ref{sec:methods} presents the Riemannian gradient descent and Newton methods, followed by the convergence analysis of the energy-adaptive Riemannian gradient descent method in Section~\ref{sec:convergence}. Section~\ref{sec:FEM} discusses the finite element discretisation and the discrete geometric ingredients. Finally, Section~\ref{sec:numerics} reports on numerical experiments.\medskip

\textbf{Notation.}
The set of $p\times p$ real diagonal matrices is denoted by $\Dp$. For $M\in\R^{p\times p}$, the trace of~$M$ is denoted by $\trace M$. In addition, we denote by $\diag(v)$ the diagonal matrix with components of a~vector $v\in\R^p$ on the diagonal and by $\ddiag(M)$ the diagonal matrix whose diagonal elements are the same as that of $M$. The $p\times p$ identity and zero matrices are denoted by $I_p$ and $0_p$, respectively. The Euclidean vector norm is denoted by $\|\cdot\|$ and the spectral matrix norm is denoted by $\|\cdot\|_2$.
%
%
\section{Constrained energy minimisation and existence of a ground state}\label{sec:setting}
%
%
The ground states are precisely defined by a mathematical model of constrained energy minimisation. Their existence is then proved under appropriate assumptions on the model parameters.

\subsection{Energy-related mathematical model}\label{sec:setting:energy}
Let $\Omega\subset\R^d$ with $d=1,2,3$ be a~bounded convex Lipschitz domain. Consider the Hilbert space $L^2(\Omega)$ with the inner product $(\,\cdot\,,\cdot\,)_{L^2(\Omega)}$ and the Sobolev space $H_0^1(\Omega)$ with the inner product $(\,\cdot\,,\cdot\,)_{H^1(\Omega)}$. For $p\geq 1$, we define the Hilbert spaces $\Lsp=[L^2(\Omega)]^p$ and $\Hsp=[H_0^1(\Omega)]^p$ of $p$-frames which form a~Gelfand triple $\Hsp \subset \Lsp \subset \Hsp^*$, where $\Hsp^*=[H^{-1}(\Omega)]^p$ denotes the dual space of $\Hsp$. 
On the pivot space $\Lsp$, we define an~inner product
\begin{equation*}
	(\vbf, \wbf)_{\Lsp} 
	= \sum_{j=1}^p (v_j, w_j)_{L^2(\Omega)} 
\end{equation*}
for $\vbf = (v_1, \dots, v_p), \wbf = (w_1, \dots, w_p) \in \Lsp$, which induces the norm $\|\vbf\|_{\Lsp} = \sqrt{(\vbf, \vbf)_{\Lsp}}$. In addition, we introduce the diagonal matrix
\[
    \out{\vbf}{\wbf}
    =\diag\big((v_1, w_1)_{L^2(\Omega)},\ldots,(v_p, w_p)_{L^2(\Omega)}\big)
\]
of the component-wise $L^2$-inner products of $\vbf,\wbf\in\Lsp$.
Due to the symmetry of the $L^2$-inner product,
we have \mbox{$\out{\vbf}{\wbf}=\out{\wbf}{\vbf}$}. Moreover, it holds
$(\vbf, \wbf)_{\Lsp} = \trace\, \out{\vbf}{\wbf}$. For $1\le q < \infty$, we also define the norms
\[
\|\vbf\|_{[L^q(\Omega)]^p}
    = \Big(\sum_{j=1}^p \|v_j\|_{L^q(\Omega)}^q\Big)^{1/q}
\]
on the Lebesgue spaces $[L^q(\Omega)]^p$ of $p$-frames. 

Next, we consider the energy functional $\calE\colon \Hsp \rightarrow \R$ from~\eqref{eq:energy}, which can equivalently be written as
\begin{equation}\label{eq:energy_short}
\calE(\varphibf) = \int_\Omega \frac{1}{2} \trace\big((\nabla \varphibf)^T\nabla \varphibf\big)
+ \frac{1}{2}(\varphibf \circ \varphibf) V(x) + \frac{1}{4}(\varphibf \circ \varphibf) K(\varphibf \circ \varphibf)^T \,\dx,
\end{equation}
where 
$\varphibf \circ \vbf = (\varphi_1 v_1, \ldots, \varphi_p v_p)$ denotes the Hadamard (component-wise) product of two $p$-frames, $V = [\V_1, \ldots, \V_p]^T$, and $K = [\kappa_{ij}]_{i,j=1}^{p} \in \R^{p \times p}$. 
Our aim is to compute a global minimiser of $\calE$, a~so-called {\em ground state}, on the manifold
\begin{equation}\label{eq:obligman}
	\OB =  \big\{ \varphibf \in \Hsp \,:\, \out{\varphibf}{\varphibf} = N \big\}
\end{equation}
of admissible states that respect the prescribed masses of the individual components, encoded in the diagonal matrix $N = \diag(N_1, \ldots, N_p)$; see also \eqref{eq:constraints}. This manifold is known as the \emph{infinite-dimensional generalised oblique manifold}. The resulting constrained energy minimisation problem can then shortly be written as
\begin{equation}\label{eq:min}
	\min_{\varphibf\in\OB} \calE(\varphibf).
\end{equation} 
For simplicity, the restriction of $\,\calE$ to $\OB$ will also be denoted by $\,\calE$ in what follows.
\begin{remark}[Scaling invariance of $\calE$ and non-uniqueness of ground state]\label{rm:invariance}
The energy functional satisfies
$\calE(\varphibf \, \Sigma_{\pm 1}) = \calE(\varphibf)$ 
for all $\varphibf \in \OB$ and $\Sigma_{\pm 1} \in \Dp$ with $\pm 1$ on the diagonal. Thus, any global or local minimiser of \eqref{eq:min}, if it exists, is not unique in the strict sense. A proper concept of uniqueness will be introduced later in Section~\ref{sec:eigenvector}.
\end{remark}
Throughout this paper, we make the following assumptions on the model parameters $\V_j$ and~$K$: 
\vspace{0.1em}
\begin{itemize}[itemsep=0.2em]
	\item[\bf{A1:}] The external potentials satisfy $\V_j \in L^\infty(\Omega)$ with $\V_j(x) \geq 0$ for almost all $x \in \Omega$.
	\item[\bf{A2:}] The interaction matrix $K$ is symmetric and positive definite.
\end{itemize}
\vspace{0.4em}
Assumption~{\bf A1} allows us to equip the space $\Hsp$ with the potential-dependent inner product
\[
(\vbf, \wbf)_\Hsp 
    = \sum_{j=1}^p \int_\Omega (\nabla v_j)^T \nabla w_j 
        + V_j(x)\, v_j w_j \, \dx
\]
and the induced norm $\|\vbf\|_\Hsp = \sqrt{(\vbf, \vbf)_\Hsp}$, which is equivalent to the canonical norm of $\Hsp$.
Note that for $p=2$, Assumption~{\bf A2} corresponds to the miscible regime $\Delta_{\text{misc}} > 0$. 
In Section~\ref{sec:setting:groundstate} below, we show that these assumptions ensure the existence of a ground state. In the proof and throughout the paper, we will frequently use several Sobolev embedding inequalities \cite[Sec.~10.9]{Alt16} generally given by  
\begin{equation}\label{eq:qSobolev}
\|\vbf\|_{[L^q(\Omega)]^p} \leq C_q\, \|\vbf\|_\Hsp \qquad \text{ for all } \vbf \in \Hsp
\end{equation}
with $1\leq q\leq 6$ and constants $C_q>0$ depending on $q$, $d$, and $\Omega$.

\subsection{Properties of the derivative of the energy}
The energy functional $\,\calE$ is Fr\'echet differentiable on~$\Hsp$. Given Assumption~{\bf A2}, its directional derivative at $\varphibf \in \Hsp$ along $\wbf \in \Hsp$ is given by 
\[
	\Drm \calE(\varphibf)[\wbf] = a_\varphibf(\varphibf, \wbf)
\]
with the bilinear form $a_\varphibf\colon \Hsp\times \Hsp\to\R$, 
\begin{align}\label{eq:aphi}
	a_\varphibf(\vbf, \wbf) 
	& = \sum_{j=1}^p \int_\Omega (\nabla v_j)^T \nabla w_j  
	   + \V_j(x)\, v_j w_j + \rho_j(\varphibf)\, v_j w_j \, \dx\\
    & = \int_\Omega \trace\big((\nabla \vbf)^T \nabla \wbf\big)  
	   + (\vbf \circ \wbf)\V(x) + (\varphibf \circ \varphibf)K(\vbf \circ \wbf)^T \, \dx. \notag
\end{align}
The following proposition establishes some useful properties of $a_{\varphibf}$. 
\begin{proposition}[Properties of $a_\varphibf$]\label{prop:aphiGarding}
Let Assumption~{\bf A1} be fulfilled and let $\varphibf\in \Hsp$. Then the bilinear form~$a_\varphibf$ defined in \eqref{eq:aphi} is symmetric, bounded, and satisfies a~G{\aa}rding inequality. 
\end{proposition}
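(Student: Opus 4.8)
The plan is to verify the three properties of $a_{\varphibf}$ directly from the representation \eqref{eq:aphi}, treating $\varphibf \in \Hsp$ as fixed. \textbf{Symmetry} is immediate: each of the three integrands is invariant under swapping $\vbf$ and $\wbf$, since $(\nabla v_j)^T\nabla w_j = (\nabla w_j)^T\nabla v_j$, the potential and coupling terms only involve the products $v_jw_j$, and the scalar weight $\rho_j(\varphibf)$ is independent of $\vbf,\wbf$.

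For \textbf{boundedness}, I would estimate each term of the first line of \eqref{eq:aphi} by $\|\vbf\|_\Hsp\|\wbf\|_\Hsp$ up to a constant. The gradient and potential contributions together give exactly $(\vbf,\wbf)_\Hsp$, which is bounded by $\|\vbf\|_\Hsp\|\wbf\|_\Hsp$ by Cauchy--Schwarz. The only nontrivial term is the interaction contribution $\sum_j \int_\Omega \rho_j(\varphibf)\,v_jw_j\dx$. Here I would use H\"older's inequality to write $\int_\Omega \rho_j(\varphibf)\,v_jw_j\dx \leq \|\rho_j(\varphibf)\|_{L^2(\Omega)}\|v_j\|_{L^4(\Omega)}\|w_j\|_{L^4(\Omega)}$, and bound $\|\rho_j(\varphibf)\|_{L^2(\Omega)}$ by a constant times $\sum_i |\kappa_{ij}|\,\|\varphi_i\|_{L^4(\Omega)}^2$. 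Applying the Sobolev embedding \eqref{eq:Sobolev4} (valid for $d\leq 3$) to both $\varphibf$ and to $\vbf,\wbf$, the interaction term is bounded by $C\,\|\varphibf\|_\Hsp^2\,\|\vbf\|_\Hsp\|\wbf\|_\Hsp$. Summing over $j$ yields boundedness with a constant depending on the fixed $\varphibf$ and on $K$.

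For the \textbf{G\aa rding inequality}, I would test the diagonal form $a_\varphibf(\vbf,\vbf)$. The first two terms reproduce $\|\vbf\|_\Hsp^2$ exactly, so it remains to control the interaction term $\sum_j\int_\Omega \rho_j(\varphibf)\,v_j^2\dx$ from below. Since the $\kappa_{ij}$ need not all be nonnegative, this term can be negative, but by the same H\"older/Sobolev estimate as above it is bounded in absolute value by $C\,\|\varphibf\|_\Hsp^2\,\|\vbf\|_{[L^4(\Omega)]^p}^2$. I would then invoke the Gagliardo--Nirenberg--type interpolation $\|\vbf\|_{[L^4(\Omega)]^p}^2 \leq \eta\,\|\vbf\|_\Hsp^2 + C_\eta\,\|\vbf\|_\Lsp^2$ for any $\eta>0$ (an $\eps$-version of \eqref{eq:Sobolev4} combined with the compact embedding $\Hsp\hook\Lsp$), choose $\eta$ small enough to absorb the interaction term into $\|\vbf\|_\Hsp^2$, and obtain
\[
  a_\varphibf(\vbf,\vbf) \geq \tfrac12\,\|\vbf\|_\Hsp^2 - C'\,\|\vbf\|_\Lsp^2
\]
for a constant $C'$ depending on $\varphibf$ and $K$, which is precisely a G\aa rding inequality.

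The main obstacle is the sign-indefiniteness of the interaction term in the coercivity estimate: because Assumption~{\bf A2} bounds $K$ but the individual $\kappa_{ij}$ may be negative, one cannot expect strict coercivity on $\Hsp$ and must settle for a G\aa rding inequality. The key technical device that makes this work is the interpolation inequality allowing a small multiple of $\|\vbf\|_\Hsp^2$ to be peeled off and absorbed, with the remainder controlled only in the weaker $\Lsp$-norm; I would state this interpolation carefully, since it is where the compactness of the embedding and the restriction $d\leq 3$ enter.
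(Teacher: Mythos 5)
Your proof is correct and follows essentially the same route as the paper: isolate the interaction term, control it by H\"older together with the subcritical Sobolev embedding, and absorb a small multiple of the $\Hsp$-norm so that only an $\Lsp$-remainder survives. The paper implements your $\eta$-interpolation $\|\vbf\|_{[L^4(\Omega)]^p}^2\le\eta\,\|\vbf\|_\Hsp^2+C_\eta\,\|\vbf\|_\Lsp^2$ concretely (via $\|\vbf\|_{[L^4(\Omega)]^p}\le\|\vbf\|_{\Lsp}^{1/4}\|\vbf\|_{[L^6(\Omega)]^p}^{3/4}$ and two applications of Young's inequality for $d=3$, the Ladyzhenskaya inequality for $d=2$, and $H_0^1(\Omega)\hookrightarrow L^\infty(\Omega)$ for $d=1$), which produces the explicit G\aa rding constant $C_6^6\|K\|_2^4\|\varphibf\|_{[L^4(\Omega)]^p}^8$ reused later in Section~\ref{ssec:GlobConv}; note only that this $\eta$-inequality really comes from the strict subcriticality of $L^4$ (interpolation plus Young) rather than from compactness of $\Hsp\hookrightarrow\Lsp$ as such, although an Ehrling-type argument based on the compact embedding $H_0^1(\Omega)\hookrightarrow L^4(\Omega)$ would serve equally well.
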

\begin{proof}
The proofs of symmetry and boundedness are straightforward. 
In order to derive the G{\aa}rding inequality, we first observe that the interpolation inequality \cite[App.~B.2.h]{Eva10} and the Sobolev embedding inequality \eqref{eq:qSobolev} with $q=6$ imply that 
\begin{equation}\label{eq:estL4}
\|\vbf\|_{[L^4(\Omega)]^p} 
\leq \|\vbf\|_{\Lsp}^{1/4} \|\vbf\|_{[L^6(\Omega)]^p}^{3/4}
\leq C_6^{3/4} \|\vbf\|_{\Lsp}^{1/4} \|\vbf\|_{\Hsp}^{3/4}.
\end{equation}
Using the Cauchy--Schwarz and Young's inequalities~\cite[App.~B.2]{Eva10} as well as \eqref{eq:estL4}, we then obtain 
\begin{align*}
\|\vbf\|_{\Hsp}^2 
    & = a_{\varphibf}(\vbf, \vbf)
        - \int_\Omega (\varphibf \circ \varphibf) K (\vbf \circ \vbf)^T \,\dx 
    \leq 
    a_{\varphibf}(\vbf, \vbf)
        + \|K\|_2\, \|\varphibf \circ \varphibf\|_\Lsp \|\vbf \circ \vbf\|_\Lsp \\  
    & \leq a_{\varphibf}(\vbf, \vbf)
        + C_6^{3/2} \|K\|_2\, \|\varphibf\|_{[L^4(\Omega)]^p}^2 \|\vbf\|_{\Lsp}^{1/2} \|\vbf\|_{\Hsp}^{3/2} \\
    & \leq a_{\varphibf}(\vbf, \vbf) +
     \tfrac{1}{4}\, C_6^{6}\, \|K\|_2^4 \|\varphibf\|_{[L^4(\Omega)]^p}^8 \|\vbf\|_{\Lsp}^2 + \tfrac{3}{4}\, \|\vbf\|^2_{\Hsp}.
\end{align*}
This implies the G{\aa}rding inequality 
\begin{equation}\label{eq:Garding}
a_{\varphibf}(\vbf, \vbf)
    \geq \tfrac{1}{4}\, \|\vbf\|_{\Hsp}^2 -c_\varphibf \|\vbf\|_\Lsp^2
\end{equation}
with $c_\varphibf=\tfrac{1}{4}\,C_6^{6}\, \|K\|^4_2\, \|\varphibf\|_{[L^4(\Omega)]^p}^8$.
\end{proof}

Note that for the G{\aa}rding inequality~\eqref{eq:Garding}, we do not need any assumption on the matrix~$K$.

\begin{remark}[Coercivity of $a_\varphibf$ for non-negative $K$]\label{re:non-negative}
If all entries of the interaction matrix $K$ are non-negative, then the integrand $(\varphibf \circ \varphibf) K (\vbf \circ \vbf)^T$ in the nonlinear term of $a_\varphibf(\vbf, \vbf)$ is non-negative almost everywhere. Hence, $a_\varphibf$ is coercive with a coercivity constant of $1$, which means that for all $\vbf \in \Hsp$, we have $a_{\varphibf}(\vbf, \vbf) \geq \|\vbf\|_\Hsp^2$.
\end{remark}

The bilinear form $a_\varphibf$ defines the \emph{Gross--Pitaevskii Hamiltonian} $\calA_\varphibf\colon \Hsp\to \Hsp^*$ given by
\[
\langle \calA_\varphibf \vbf,\wbf\rangle = a_\varphibf(\vbf,\wbf)  
\qquad \text{for all }\vbf,\wbf\in \Hsp,
\]
where $\langle\,\cdot\,,\cdot\,\rangle$ denotes the duality pairing on $H^*\times \Hsp$. 
In particular, we have $\Drm \calE(\varphibf)[\wbf] = \langle \calA_\varphibf \varphibf, \wbf\rangle$ for all $\wbf \in \Hsp$. Due to the additive structure of $a_\varphibf$ in \eqref{eq:aphi}, the operator $\calA_\varphibf$ can be represented as 
\begin{subequations}
\label{eq:calA}
\begin{equation}\label{eq:calAsum}
\langle \calA_\varphibf \vbf,\wbf\rangle 
= \sum_{j=1}^p \langle \calA_{\varphibf,j} v_j,w_j\rangle
\qquad \text{for all }\vbf,\wbf\in \Hsp,
\end{equation}
where the component operators $\calA_{\varphibf,j}\colon H_0^1(\Omega)\to H^{-1}(\Omega)$ are given by
\begin{equation}\label{eq:calAj}
    \langle\calA_{\varphibf,j}v_j,w_j\rangle 
    = \int_\Omega (\nabla v_j)^T \nabla w_j  
	+ \V_j(x)\, v_j w_j
    + \rho_j(\varphibf)\, v_j w_j \dx, \quad j=1,\ldots,p.
\end{equation}
\end{subequations}
Note that, for fixed $\varphibf$, the operator $\calA_{\varphibf,j}$ is simply the weak form of the Laplacian shifted by a~bounded potential $V_j+\rho_j(\varphibf)$. Formula~\eqref{eq:calAsum} shows that the operator $\calA_\varphibf$ acts component-wise on a $p$-frame,~i.e., 
\begin{equation}\label{eq:calAcomp}
\calA_\varphibf\vbf=(\calA_{\varphibf,1}v_1,\ldots,\calA_{\varphibf,p}v_p).
\end{equation}
Hence, for all $\vbf \in \Hsp$ and $\Lambda \in \Dp$, we have $\calA_\varphibf(\vbf\Lambda) = (\calA_\varphibf\vbf)\Lambda$. Such properties are also present in the Hartree--Fock and Kohn--Sham Hamiltonians studied in \cite{AltPS22,SchRNB09}. Unlike these Hamiltonians, however, the individual operators~$\calA_{\varphibf,j}$ in \eqref{eq:calAcomp} vary between the components, making the analysis of multicomponent BECs more difficult.
%
%
\subsection{Existence of a ground state}\label{sec:setting:groundstate}
The following theorem generalises the existence of ground states for two-component BECs~\cite[Th.~2.6]{BaoC11} to the multicomponent case. However, even in the case $p=2$, there are slight differences in the assumptions. Specifically, \cite{BaoC11} considers the entire space $\R^d$, whereas we focus on bounded spatial domains.
\begin{theorem}[Existence of a~ground state]
\label{thm:existence}
Let Assumptions~{\bf A1} and~{\bf A2} be satisfied. Then there exists a~ground state $\varphibf_* \in \OB$, i.e., a global minimiser of the constrained minimisation problem \eqref{eq:min}.
\end{theorem}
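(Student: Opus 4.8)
The plan is to establish existence via the direct method: I would show that the energy functional $\calE$ is bounded below on the manifold $\OB$, take a minimising sequence, extract a weakly convergent subsequence using coercivity, and then verify that the weak limit both lies in $\OB$ and attains the infimum by exploiting weak lower semicontinuity. The natural first step is \emph{coercivity of $\calE$ on $\Hsp$}, which is where Assumptions~{\bf A1} and~{\bf A2} enter decisively. By {\bf A2}, $K$ is positive definite, so the interaction integrand $(\varphibf\circ\varphibf)K(\varphibf\circ\varphibf)^T$ is pointwise nonnegative (it equals a positive-definite quadratic form in the nonnegative vector of densities $|\varphi_j|^2$), and by {\bf A1} the potential term is nonnegative as well. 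Hence $\calE(\varphibf)\geq \tfrac12\|\varphibf\|_\Hsp^2 - C$ on the manifold, using that the potential-weighted $\Hsp$-norm is equivalent to the canonical one; in particular $\calE$ is bounded below and any minimising sequence is bounded in $\Hsp$.

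Next I would extract a subsequence. Since $\Hsp=[H_0^1(\Omega)]^p$ is a reflexive Hilbert space, a bounded minimising sequence $(\varphibf^{(k)})$ has a subsequence converging weakly in $\Hsp$ to some $\varphibf_*$. Because $\Omega$ is a bounded Lipschitz domain, the Rellich--Kondrachov theorem gives compact embeddings $\Hsp\hookrightarrow\hook [L^2(\Omega)]^p$ and, for $d\leq 3$, also $\Hsp\hookrightarrow\hook [L^4(\Omega)]^p$ (this is subcritical in view of \eqref{eq:Sobolev6}), so along a further subsequence the convergence is strong in both $L^2$ and $L^4$. The strong $L^2$-convergence is exactly what is needed to pass to the limit in the mass constraints: $\out{\varphibf^{(k)}}{\varphibf^{(k)}}\to\out{\varphibf_*}{\varphibf_*}=N$, so the limit remains admissible, $\varphibf_*\in\OB$, and the constraint set is preserved under the limit.

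Finally I would argue that $\calE$ is weakly lower semicontinuous, so $\calE(\varphibf_*)\leq\liminf_k\calE(\varphibf^{(k)})=\inf_{\OB}\calE$, forcing $\varphibf_*$ to be a minimiser. The kinetic term $\tfrac12\|\nabla\varphibf\|_\Lsp^2$ is a squared seminorm and hence weakly lower semicontinuous by convexity; the potential and interaction terms are \emph{continuous} under the strong $L^2$- and $L^4$-convergence established above (the quartic interaction term is a continuous function of the $L^4$-norms of the components, and $K$ has bounded spectral norm), so they pass to the limit directly. \textbf{The main obstacle is the nonlinear quartic interaction term:} one must ensure it converges rather than merely being lower semicontinuous, and this is precisely why the compact embedding into $[L^4(\Omega)]^p$ for $d\leq 3$ is essential---it upgrades weak convergence to strong $L^4$-convergence and thereby controls the quartic nonlinearity. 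A secondary subtlety is that positive definiteness of $K$, rather than mere nonnegativity of its entries, is what guarantees the interaction energy is bounded below and coercive-compatible; I would keep track of this to confirm the argument works throughout the miscible regime $\Delta_{\mathrm{misc}}>0$ and not only when all $\kappa_{ij}\geq0$.
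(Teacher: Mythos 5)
Your proposal is correct and follows essentially the same route as the paper: the direct method with a minimising sequence bounded in $\Hsp$ (via non-negativity of the potential and interaction terms under \textbf{A1} and \textbf{A2}), a weak limit, the compact embedding into $[L^2(\Omega)]^p$ to preserve the mass constraints, and weak lower semicontinuity of $\calE$. You merely spell out details the paper delegates to its reference (in particular the compact embedding into $[L^4(\Omega)]^p$ handling the quartic term), and these details are accurate.
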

\begin{proof} The proof essentially follows the approach of \cite[Lem.~2]{CanCM10}, to which we refer for details that we omit here for the sake of brevity. Due to Assumptions~{\bf A1} and~{\bf A2}, the energy functional $\calE$ in~\eqref{eq:energy_short} is bounded from below (by zero). Hence, there exists a~minimising sequence~$\varphibf^n$ in~\mbox{$\OB \subset \Hsp$}. Since this sequence is bounded in~$\Hsp$, there exists a weak limit~$\varphibf^\infty \in \Hsp$ such that (up to a subsequence) for each component $j=1,\dots,p$, we have 
\[
    \varphi_j^n \rightharpoonup \varphi_j^\infty \enskip\text{ in } H^1_0(\Omega).
\]
The compact embedding $H^1_0(\Omega) \hookrightarrow L^2(\Omega)$ then implies strong convergence $\varphi_j^n \rightarrow \varphi_j^\infty$ of the components in $L^2(\Omega)$ and, hence,
\[
    \|\varphi_j^\infty\|^2_{L^2(\Omega)}
    = \lim_{n \to \infty} \|\varphi_j^n\|^2_{L^2(\Omega)}
    = N_j, \qquad j=1,\ldots,p,  
\]
proving the feasibility of $\varphibf^\infty \in \OB$. The weak lower semi-continuity of the functional $\calE$ on~$\Hsp$ shows that~$\varphibf_* = \varphibf^\infty$ is a global minimiser. 
\end{proof}
Note that in Theorem~\ref{thm:existence}, Assumption~{\bf A2} may be replaced by the condition that $K$ is element-wise non-negative, since this as well guarantees the boundedness (from below) and weak lower semi-continuity of $\calE$.
\begin{remark}[Non-negative ground state]
If $\varphibf_* \in \OB$, then $|\varphibf_*| \in \OB$, where~$|\varphibf_*|$ is the $p$-frame with absolute values in all the components. Moreover, we have \mbox{$\calE(|\varphibf_*|) = \calE(\varphibf_*)$}. This implies that with $\varphibf_*$ also $|\varphibf_*|$ is a ground state. Hence, there exists a~non-negative ground state $\varphibf_* \geq 0$, where the inequality is understood to be component-wise.    
\end{remark}
%
%
\section{Nonlinear eigenvector problem and uniqueness of the ground state}
\label{sec:eigenvector}
This section aims to derive the first- and second-order necessary optimality conditions of the ground state. In particular, we will characterise the constrained critical points of the energy as solutions to the NLEVP~\eqref{eq:multiBEC} representing the critical points of the Lagrangian.
\subsection{Critical points and nonlinear eigenvector problem}
\label{sec:eigenvector:Hessian}
To establish a~relation between the coupled Gross--Pitaevskii eigenvalue problem~\eqref{eq:multiBEC} and the energy minimisation problem~\eqref{eq:min}, we consider the Lagrange functional 
\begin{equation}\label{eq:Lagrange}
	\calL(\varphibf,\Lambda) 
	= \calE(\varphibf) - \frac{1}{2} \trace\Big(\Lambda \big(\out{\varphibf}{\varphibf}-N\big) \Big)
\end{equation}
with a~Lagrange multiplier $\Lambda\in\Dp$. The directional derivative of $\calL$ with respect to $\varphibf\in\Hsp$ along $\wbf\in\Hsp$ is given by 
\begin{equation*}
	\Drm_\varphibf \calL(\varphibf,\Lambda)[\wbf]
	 = \Drm\calE(\varphibf)[\wbf]-(\varphibf\,\Lambda,\wbf)_\Lsp = \langle\calA_\varphibf\,\varphibf,\wbf\rangle - (\varphibf\,\Lambda,\wbf)_\Lsp. 
\end{equation*}
A $p$-frame $\varphibf \in \Hsp$ is called a \emph{constrained critical point} of the energy functional~$\,\calE$ if $\varphibf \in \OB$ is feasible and if it is a critical point of the Lagrangian, i.e., there exists a Lagrange multiplier $\Lambda \in \Dp$ such that $\Drm_\varphibf \calL(\varphibf,\Lambda)[\wbf] = 0$ for all $\wbf \in \Hsp$, or, equivalently,
\begin{equation}\label{eq:nlevp}
 \langle\calA_\varphibf\,\varphibf, \wbf\rangle = (\varphibf\,\Lambda,\wbf)_\Lsp \qquad \text{for all } \wbf \in \Hsp.
\end{equation}
Note in particular that a ground state $\varphibf_*$ is a constrained critical point of $\calE$. The computation of critical points for the constrained energy minimisation problem~\eqref{eq:min} is therefore linked to the NLEVP~\eqref{eq:nlevp}.
Using \eqref{eq:calAsum}, the NLEVP~\eqref{eq:nlevp} can equivalently be written component-wise as
\begin{equation}\label{eq:levp1}
    \langle \calA_{\varphibf,j} \varphi_j, w_j \rangle 
    = \lambda_j (\varphi_j, w_j)_{L^2(\Omega)} \qquad \text{for all } w_j \in H_0^1(\Omega).
\end{equation}
This equals the weak formulation of the coupled Gross--Pitaevskii eigenvalue problem previously stated in~\eqref{eq:multiBEC}.
Given a pair consisting of a $p$-frame $\varphibf$ and a diagonal matrix $\Lambda = \diag(\lambda_1, \ldots, \lambda_p)$ that solves~\eqref{eq:nlevp}, we call $\varphibf$ an \emph{eigenvector} corresponding to \emph{eigenvalues} $\lambda_1, \ldots, \lambda_p$.

For a~constrained critical point $\varphibf \in \OB$, the corresponding Lagrange multiplier~$\Lambda$ can be determined as $\Lambda = \out{\calA_\varphibf\,\varphibf}{\varphibf}N^{-1}$. This particularly implies that the diagonal entries $\lambda_j$ of $\Lambda$ are given by the Rayleigh quotients
\[
\lambda_j
= \frac{\langle \calA_{\varphibf,j} \varphi_j, \varphi_j \rangle}{(\varphi_j, \varphi_j)_{L^2(\Omega)}}, \qquad j=1,\ldots,p,
\]
 of the component operators~$\calA_{\varphibf,j}$.
\begin{lemma}[Eigenvalues and eigenfunctions of $\calA_{\varphibf,j}$]
\label{lem:spectralGap}
Let Assumption~{\bf A1} be fulfilled. Then for fixed $\varphibf\in \OB$ and \mbox{$j=1,\ldots,p$}, the operator $\calA_{\varphibf,j}$ introduced in~\eqref{eq:calAj} has 
an $L^2$-or\-tho\-go\-nal basis of eigenfunctions $v_{j,1},v_{j,2},\ldots$ in~$H_0^1(\Omega)$ corresponding to the real eigenvalues \mbox{$\lambda_{1}(\calA_{\varphibf,j})< \lambda_{2}(\calA_{\varphibf,j})\leq \ldots$}  ordered increasingly. Furthermore, the smallest eigenvalue $\lambda_{1}(\calA_{\varphibf,j})$ is simple and the corresponding eigenfunction satisfies $|v_{j,1}|>0$ in $\Omega$.
\end{lemma}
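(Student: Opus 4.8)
The plan is to recognise $\calA_{\varphibf,j}$ as a Schr\"odinger-type operator $-\Delta + W_j$ with coefficient $W_j := \V_j + \rho_j(\varphibf)$ and to run the standard spectral theory for such operators on a bounded domain. First I would record the integrability of $W_j$ for fixed $\varphibf \in \OB$: Assumption~{\bf A1} gives $\V_j \in L^\infty(\Omega)$, while $\rho_j(\varphibf) = \sum_i \kappa_{ij}|\varphi_i|^2 \in L^3(\Omega)$ because $\varphi_i \in H_0^1(\Omega) \hookrightarrow L^6(\Omega)$ for $d \le 3$; in particular $W_j \in L^q(\Omega)$ with some $q > d/2$. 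The bilinear form defining $\calA_{\varphibf,j}$ is then symmetric and bounded on $H_0^1(\Omega)$, and coercive under the standing coercivity assumption (equivalently, by restricting the G{\aa}rding estimate of Proposition~\ref{prop:aphiGarding} to $p$-frames supported in the single component $j$). Consequently the solution operator $T_j\colon L^2(\Omega)\to L^2(\Omega)$, sending $f$ to the weak solution of $\calA_{\varphibf,j}u=f$, is well defined, self-adjoint, and positive, and it is compact because its range lies in $H_0^1(\Omega)$ and the embedding $H_0^1(\Omega)\hookrightarrow L^2(\Omega)$ is compact. The spectral theorem for compact self-adjoint operators then produces an $L^2$-orthogonal basis of eigenfunctions $v_{j,1},v_{j,2},\ldots \in H_0^1(\Omega)$ with real eigenvalues $\lambda_k(\calA_{\varphibf,j})=1/\mu_k$ accumulating only at $+\infty$, ordered increasingly. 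This settles the first assertion apart from the strict inequality $\lambda_1(\calA_{\varphibf,j}) < \lambda_2(\calA_{\varphibf,j})$.

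Next I would prove positivity of the lowest eigenfunction via the variational characterisation $\lambda_1(\calA_{\varphibf,j}) = \min_{0\neq v\in H_0^1(\Omega)} \langle \calA_{\varphibf,j}v,v\rangle / \|v\|_{L^2(\Omega)}^2$. The key observation is that for $v\in H_0^1(\Omega)$ one has $|v|\in H_0^1(\Omega)$ with $|\nabla|v||=|\nabla v|$ almost everywhere, so that $\langle \calA_{\varphibf,j}|v|,|v|\rangle = \langle \calA_{\varphibf,j}v,v\rangle$ (the zeroth-order terms depend on $v$ only through $v^2=|v|^2$) while $\||v|\|_{L^2(\Omega)}=\|v\|_{L^2(\Omega)}$. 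Hence whenever $v$ minimises the Rayleigh quotient, so does $|v|\ge 0$, and therefore a non-negative ground state $u$ exists. Writing the eigenvalue equation as $-\Delta u = (\lambda_1(\calA_{\varphibf,j}) - W_j)\,u$ with $u\ge 0$, $u\not\equiv 0$, and $W_j\in L^q(\Omega)$, $q>d/2$, the Harnack inequality and strong maximum principle for Schr\"odinger operators with such potentials (via De Giorgi--Nash--Moser theory) yield $u>0$ throughout the connected domain $\Omega$. In particular $|v_{j,1}|>0$ in $\Omega$.

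Finally, simplicity of $\lambda_1(\calA_{\varphibf,j})$, hence $\lambda_1(\calA_{\varphibf,j})<\lambda_2(\calA_{\varphibf,j})$, would follow by a standard sign argument that reuses this positivity. The same regularity theory shows that eigenfunctions are continuous on $\Omega$. Assuming, for contradiction, that the eigenspace of $\lambda_1(\calA_{\varphibf,j})$ has dimension at least two, I would take two linearly independent eigenfunctions and form a nontrivial linear combination $w$ that vanishes at a prescribed interior point $x_0\in\Omega$ (one linear condition on two free coefficients). Then $w$ is again a ground state, so $|w|$ is a non-negative ground state with $|w|(x_0)=0$ and $|w|\not\equiv 0$, contradicting the strict positivity just established. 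Thus the eigenspace is one-dimensional, $\lambda_1(\calA_{\varphibf,j})$ is simple, and $v_{j,1}$ has constant sign with $|v_{j,1}|>0$.

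The hard part will not be the abstract spectral decomposition, which is routine, but ensuring that the maximum principle and the continuity of eigenfunctions remain valid for the potentially unbounded coefficient $\rho_j(\varphibf)$, which for $d=3$ only lies in $L^3(\Omega)$; this is precisely what forces the $L^q$-version ($q>d/2$) of the De Giorgi--Nash--Moser and Harnack theory in place of the classical bounded-coefficient strong maximum principle. The convexity, and hence connectedness, of $\Omega$ is also used crucially to propagate positivity across the entire domain.
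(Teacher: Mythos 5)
Your proof is correct and follows essentially the same route as the paper: both recognise $\calA_{\varphibf,j}$ as a linear Schr\"odinger operator, use the G{\aa}rding inequality of Proposition~\ref{prop:aphiGarding} to pass to a shifted coercive form, invoke the standard compact self-adjoint spectral theory for the $L^2$-orthogonal eigenbasis, and obtain simplicity and positivity of the ground mode from the Rayleigh-quotient/$|v|$ trick combined with a Harnack-type strong maximum principle. The only difference is one of presentation: the paper delegates the regularity, positivity, and simplicity details to the proof of \cite[Lem.~2]{CanCM10}, whereas you spell out exactly the argument (including the $L^q$, $q>d/2$, integrability of $\rho_j(\varphibf)$ needed for De Giorgi--Nash--Moser) that this citation encapsulates.
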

\begin{proof}
For fixed $\varphibf\in \OB$, the operator~$\calA_{\varphibf,j}$ corresponds to the linear Schr\"odinger operator with external potential $\V_j+\rho_j(\varphibf)$. By Proposition~\ref{prop:aphiGarding}, we know that there exists a~constant $c_\varphibf>0$ such that $\widehat{a}_{\varphibf,j}(v ,w)  
    := \langle \calA_{\varphibf,j} v,w\rangle 
        + c_\varphibf\, (v ,w)_{L^2(\Omega)}$
defines a symmetric, bounded, and coercive bilinear form. This implies that $\calA_{\varphibf,j}=\calA_{|\varphibf|,j}$ has a~countably infinite number of real eigenvalues such that the smallest eigenvalue is simple. The corresponding eigenfunctions form an $L^2$-orthogonal basis in $H_0^1(\Omega)$. Moreover, according to the proof of~\cite[Lem.~2]{CanCM10}, $v_{j,1}$ is H\"older continuous and can be chosen to be positive in $\Omega$. 
\end{proof}
The subsequent result connects the eigenvalues of the component operators with the diagonal elements of the Lagrange multiplier corresponding to the ground state.
\begin{proposition}[Characterisation of Lagrange multipliers as eigenvalues of components]\label{prop:LagrEigVal}
Let Assumptions~\textup{\textbf{A1}} and~\textup{\textbf{A2}} be fulfilled and let $\varphibf_*\in\OB$ with $\varphibf_*\geq 0$ be a~ground state of the energy functional $\,\calE$ in \eqref{eq:energy} with the Lagrange multiplier \mbox{$\Lambda_*=\diag(\lambda_{*,1},\ldots,\lambda_{*,p})$}. Then, for all $j=1,\ldots,p$,
the smallest eigenvalue $\lambda_{1}(\calA_{\varphibf_*,j})$ of $\calA_{\varphibf_*,j}$ coincides with $\lambda_{*,j}$, i.e.,
$\lambda_{1}(\calA_{\varphibf_*,j})=\lambda_{*,j}$. 
\end{proposition}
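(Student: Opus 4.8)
The plan is to show that each component $\varphi_{*,j}$ of the ground state is, because of its nonnegativity, forced to be the lowest eigenfunction of $\calA_{\varphibf_*,j}$, so that the associated eigenvalue $\lambda_{*,j}$ must be the smallest one. First I would record that, as noted just after \eqref{eq:nlevp}, the ground state $\varphibf_*$ is a constrained critical point and therefore satisfies the component-wise NLEVP \eqref{eq:levp1} with multiplier $\Lambda_*$; that is, for every $j$,
\[
\langle \calA_{\varphibf_*,j}\,\varphi_{*,j}, w_j\rangle = \lambda_{*,j}\,(\varphi_{*,j}, w_j)_{L^2(\Omega)} \qquad \text{for all } w_j \in H_0^1(\Omega).
\]
This is precisely the weak eigenvalue equation, and since $\|\varphi_{*,j}\|_{L^2(\Omega)}^2 = N_j > 0$ by feasibility $\varphibf_* \in \OB$, the component $\varphi_{*,j}$ is a nonzero eigenfunction of $\calA_{\varphibf_*,j}$ with eigenvalue $\lambda_{*,j}$. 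In particular, $\lambda_{*,j}$ coincides with one of the eigenvalues $\lambda_k(\calA_{\varphibf_*,j})$ from Lemma~\ref{lem:spectralGap}, and it remains only to identify it as the smallest, $k=1$.

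The decisive ingredient is the sign structure. By hypothesis $\varphibf_* \geq 0$, so $\varphi_{*,j} \geq 0$ almost everywhere, while Lemma~\ref{lem:spectralGap} provides a simple smallest eigenvalue $\lambda_1(\calA_{\varphibf_*,j})$ whose eigenfunction $v_{j,1}$ can be chosen with $v_{j,1} > 0$ in $\Omega$. I would argue by contradiction: suppose $\lambda_{*,j} > \lambda_1(\calA_{\varphibf_*,j})$. Since the eigenfunctions of the self-adjoint operator $\calA_{\varphibf_*,j}$ form an $L^2$-orthogonal basis and $\lambda_1$ is simple, the eigenfunction $\varphi_{*,j}$, belonging to a strictly larger eigenvalue, is $L^2$-orthogonal to $v_{j,1}$, whence $(\varphi_{*,j}, v_{j,1})_{L^2(\Omega)} = 0$. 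However, the integrand $\varphi_{*,j}\,v_{j,1} \geq 0$ almost everywhere, so vanishing of the integral forces $\varphi_{*,j}\,v_{j,1} = 0$ a.e.; as $v_{j,1} > 0$ in $\Omega$, this yields $\varphi_{*,j} = 0$ a.e., contradicting $\|\varphi_{*,j}\|_{L^2(\Omega)}^2 = N_j > 0$. Hence $\lambda_{*,j}$, being an eigenvalue that cannot exceed the smallest one, must equal $\lambda_1(\calA_{\varphibf_*,j})$. Applying this for each $j=1,\ldots,p$ completes the argument.

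The step I expect to require the most care is the positivity argument, i.e. the Perron–Frobenius-type principle that a nonnegative eigenfunction of a Schrödinger-type operator must correspond to its lowest eigenvalue. Its validity here hinges on two facts supplied earlier: the strict positivity $v_{j,1} > 0$ of the ground eigenfunction and the simplicity of $\lambda_1(\calA_{\varphibf_*,j})$, both furnished by Lemma~\ref{lem:spectralGap}, together with the assumed nonnegativity $\varphibf_* \geq 0$ of the ground state, which transfers componentwise to $\varphi_{*,j}$. Everything else—the identification of $\varphi_{*,j}$ as an eigenfunction and the orthogonality of eigenfunctions for distinct eigenvalues—is routine given the self-adjoint spectral structure already established.
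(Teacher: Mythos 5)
Your argument is correct and follows the same route as the paper: first identify $\varphi_{*,j}$ as a nonzero eigenfunction of $\calA_{\varphibf_*,j}$ with eigenvalue $\lambda_{*,j}$ via \eqref{eq:levp1}, then conclude minimality from the sign structure. The paper simply delegates this second step to the proof of \cite[Lem.~2]{CanCM10}; your explicit Perron--Frobenius-type contradiction (orthogonality to the strictly positive first eigenfunction $v_{j,1}$ would force $\varphi_{*,j}=0$, contradicting $\|\varphi_{*,j}\|_{L^2(\Omega)}^2=N_j>0$) is precisely the content of that cited argument, relying as it should on the simplicity and positivity provided by Lemma~\ref{lem:spectralGap}.
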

\begin{proof}
For a~ground state $\varphibf_*=(\varphi_{*,1},\ldots,\varphi_{*,p})\in\OB$ with $\varphibf_*\geq 0$, we obtain from~\eqref{eq:levp1} that $\lambda_{*,j}$ is an~eigenvalue of the operator $\calA_{\varphibf_*,j}$ and $\varphi_{*,j}$ is the corresponding eigenfunction.  Then the required result follows from the proof of \cite[Lem.~2]{CanCM10}.  
\end{proof}
\subsection{Uniqueness of the ground state}
Given that the ground state as a global minimiser of \eqref{eq:min} is also an eigenvector of the NLEVP~\eqref{eq:nlevp} with component-wise minimality properties as established in Proposition~\ref{prop:LagrEigVal}, we can now state a~uniqueness result for the ground state of a~multicomponent BEC. 
\begin{theorem}[Uniqueness of the~ground state]
\label{thm:uniquedensity}
Let Assumptions~{\bf A1} and~{\bf A2} be fulfilled. Then the ground state is unique up to the global signs of its components. We can, therefore, choose the ground state $\varphibf_*$ to be component-wise positive in $\Omega$.
\end{theorem}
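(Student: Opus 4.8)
The plan is to reduce the uniqueness of the full ground state to the component-wise uniqueness already implicit in Lemma~\ref{lem:spectralGap} and Proposition~\ref{prop:LagrEigVal}. Suppose $\varphibf_*\geq 0$ and $\psibf_*\geq 0$ are two ground states of \eqref{eq:min}; by the preceding results each is a constrained critical point and hence an eigenvector of the NLEVP \eqref{eq:nlevp}, with Lagrange multipliers $\Lambda_*$ and $M_*$ whose diagonal entries are, by Proposition~\ref{prop:LagrEigVal}, the smallest eigenvalues of the respective component operators $\calA_{\varphibf_*,j}$ and $\calA_{\psibf_*,j}$. The key structural fact I would exploit is that, by Lemma~\ref{lem:spectralGap}, for each $j$ the smallest eigenvalue of a component operator is \emph{simple} with a strictly positive eigenfunction; thus once I know that $\varphi_{*,j}$ and $\psi_{*,j}$ solve the \emph{same} component eigenvalue problem with the same (minimal) eigenvalue, they must be scalar multiples of one another, and the mass constraint $\|\varphi_{*,j}\|_{L^2}^2=\|\psi_{*,j}\|_{L^2}^2=N_j$ together with positivity forces $\varphi_{*,j}=\psi_{*,j}$.

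The crux, therefore, is to show that the two ground states induce the \emph{same coupling potentials}, i.e.\ $\rho_j(\varphibf_*)=\rho_j(\psibf_*)$ almost everywhere for every $j$, since only then do $\calA_{\varphibf_*,j}$ and $\calA_{\psibf_*,j}$ coincide as operators and the simplicity argument applies. I would approach this via a convexity/strict-convexity argument on the densities. The natural route is to pass from the wave functions $\varphibf=(\varphi_j)$ to the component densities $n_j:=|\varphi_j|^2$ and to rewrite the energy \eqref{eq:energy} as a functional of $(n_1,\dots,n_p)$: the potential and interaction terms $\tfrac12\sum_j\int V_j n_j + \tfrac14\int (n)K(n)^T$ are (jointly) convex in $n$ precisely because $K$ is positive definite by Assumption~\textbf{A2}, while the kinetic term $\tfrac12\sum_j\int\|\nabla\sqrt{n_j}\|^2$ is convex in $n_j$ (this is the standard convexity of the map $n\mapsto\int|\nabla\sqrt n|^2$). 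Strict convexity of the interaction part in the density variable then yields that the minimising density vector is unique, giving $|\varphi_{*,j}|^2=|\psi_{*,j}|^2$ for all $j$, which in turn gives $\rho_j(\varphibf_*)=\rho_j(\psibf_*)$ and closes the argument as above.

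I would organise the steps as follows. First, invoke the non-negative representative so that without loss of generality both ground states are $\geq 0$. Second, set up the density functional and verify its convexity, taking care that the kinetic-energy convexity in $\sqrt{n_j}$ is the delicate classical fact (it follows from the convexity of $(a,b)\mapsto |b|^2/a$ or equivalently from the Benguria--Brezis--Lieb type identity). Third, argue that strict convexity — which comes from positive definiteness of $K$ in the interaction term — forces the optimal densities to be unique. Fourth, deduce $\rho_j(\varphibf_*)=\rho_j(\psibf_*)$ and hence $\calA_{\varphibf_*,j}=\calA_{\psibf_*,j}$; then use Proposition~\ref{prop:LagrEigVal} and the simplicity of the smallest eigenvalue from Lemma~\ref{lem:spectralGap} to conclude each $\varphi_{*,j}$ equals $\psi_{*,j}$ up to a scalar, pinned down to $+1$ by the mass constraint and non-negativity. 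Finally, restore the general (signed) case: an arbitrary ground state equals $\varphibf_*\Sigma_{\pm1}$ for some diagonal sign matrix, which is exactly the asserted uniqueness up to global component signs, and the non-negative choice is the one constructed.

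The main obstacle I anticipate is the kinetic-energy convexity step. The interaction and potential terms are visibly convex in the densities under Assumption~\textbf{A2}, but the kinetic term is only convex after the change of variables $\varphi_j\mapsto\sqrt{n_j}$, and one must handle the subtlety that $\sqrt{n_j}\in H_0^1$ and that strict convexity is genuinely \emph{strict} only on the density side where $K$ contributes — the kinetic term alone is convex but not strictly so, so the uniqueness of the minimiser must be extracted carefully, likely by combining the strict convexity furnished by $K>0$ on the $L^2$-level of the densities with the observation that equality in the kinetic convexity forces proportionality of the $\sqrt{n_j}$. This interplay, rather than any single inequality, is where the argument requires the most care, and it is precisely here that the simplicity of the smallest eigenvalue (Lemma~\ref{lem:spectralGap}) becomes indispensable to convert density equality into pointwise equality of the wave-function components.
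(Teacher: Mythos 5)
Your proposal is correct and follows essentially the same route as the paper: pass to the convex set of densities, use the Lieb--Simon--Yngvason-type convexity of the kinetic term together with the strict convexity supplied by the positive definite interaction matrix $K$ to get uniqueness of the density vector, and then use the no-sign-change property of the components (Lemma~\ref{lem:spectralGap}) to upgrade density uniqueness to uniqueness up to global component signs. Your final step via ``equal densities $\Rightarrow$ equal component operators $\Rightarrow$ simple smallest eigenvalue'' is a slightly more roundabout but equivalent way of phrasing the paper's direct appeal to the maximum principle.
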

\begin{proof}
Similarly to the single- and two-component cases in \cite{CanCM10,LieSY00} and \cite[Lem.~2.2]{BaoC11}, respectively, we introduce a convex problem by considering the modified functional $\calE(\sqrt{\bullet})$ acting on the convex set of densities
\begin{equation*}
\Upsilon = \big\{\rhobf\in \Lsp \enskip:\enskip \rhobf\geq 0 \text{ and } \sqrt{\rhobf}\in \OB\big\},
\end{equation*}
where $\rhobf\geq 0$ means that all components of $\rhobf$ are non-negative point-wise almost everywhere, and the square root is taken component-wise.
Given $\rhobf^{[1]},\rhobf^{[2]}\in \Upsilon$, define $\varphibf^{[i]} = \sqrt{\rhobf^{[i]}}\in \OB$ for $i=1,2$.  
First note that $\varphibf=(\varphi_1,\ldots,\varphi_p)$ with components
\[
    \varphi_j 
    = \sqrt{ \alpha\, \rho^{[1]}_j + (1-\alpha)\, \rho^{[2]}_j}, 
    \qquad\alpha\in(0,1),
\]
satisfies $\out{\varphibf}{\varphibf}=N$ and, hence, $\varphibf\in\OB$. 
Then \cite[Lem.~A.1]{LieSY00} implies that 
\[
    \big\|\nabla \varphi_j\big\|^2 
    \le \alpha\, \big\|\nabla \varphi^{[1]}_j \big\|^2 + (1-\alpha)\, \big\|\nabla \varphi^{[2]}_j \big\|^2, \qquad j=1,\ldots, p.
\]
Therefore, the first term of $\calE(\sqrt{\bullet})$ involving the gradients is convex on $\Upsilon$. In addition, the second term containing the potential $V$ is linear and the last term with the positive definite interaction matrix $K$ is quadratic.  
As the sum of three (strictly) convex functionals, $\calE(\sqrt{\bullet})$ is therefore strictly convex on the convex set~$\Upsilon$. For~$\varphibf_*\ge 0$, its density $\rhobf_* = \varphibf_* \circ \varphibf_* \in \Upsilon$ minimises the functional~$\calE(\sqrt{\bullet})$ on $\Upsilon$. The strict convexity of~$\calE(\sqrt{\bullet})$ on the convex feasible set~$\Upsilon$, as shown above, implies the uniqueness of the density. By Lemma~\ref{lem:spectralGap}, the components of any ground state have no sign change in $\Omega$ by a maximum principle. This, in turn, shows that the only remaining degree of freedom is a~global sign change in the components, which is the strongest possible form of uniqueness according to Remark~\ref{rm:invariance}.
\end{proof}
It is worth noting that Assumption~\textbf{A2} is essential for the uniqueness (up to sign) of the ground state. Replacing it by the condition that $K$ is element-wise non-negative does not guarantee uniqueness; see \cite[Rem.~2.1(ii)]{BaoC11}.

Based on Proposition~\ref{prop:LagrEigVal}, under Assumptions~\textup{\textbf{A1}} and~\textup{\textbf{A2}}, the unique positive ground state $\varphibf_*$ is an eigenvector of the NLEVP~\eqref{eq:nlevp} corresponding to the $p$ smallest eigenvalues $\lambda_{*,1}, \ldots, \lambda_{*,p}$ of the component operators~$\calA_{\varphibf,1},\ldots,\calA_{\varphibf,p}$, respectively. There exists a~spectral gap in the following sense.

\begin{remark}[Spectral gap]\label{lm:GroundEigVal}
Generalising the proof for the single-component BEC given in~\textup{\cite[p.~117]{CanCM10}} to the multicomponent case, one can show that if $\varphibf \in \OB$ is an eigenvector of the NLEVP~\eqref{eq:nlevp} corresponding to the eigenvalues $\lambda_1, \ldots, \lambda_p$, then either $\lambda_j = \lambda_{*,j}$ for $j = 1, \ldots, p$ and $\varphibf = \varphibf_*\Sigma_{\pm 1}$, or there exists an index $j_0 \in \{1, \ldots, p\}$ such that $\lambda_{j_0} > \lambda_{*,j_0}$.
\end{remark}
\subsection{Second-order derivatives and necessary optimality condition}
The second-order (directional) derivative of $\,\calE$ at $\varphibf$ in the direction of $\vbf, \wbf \in \Hsp$ can be computed as
\begin{align}\label{eq:HessE}
	\Drm^2 \calE(\varphibf)[\vbf,\wbf] 
	& = \lim_{t\to 0} \tfrac{1}{t}\, \big\langle \calA_{\varphibf+t\vbf}(\varphibf+t\vbf)-\calA_\varphibf\, \varphibf ,\wbf  \big\rangle 
 = \big\langle \calA_\varphibf \,\vbf +\calB_{\varphibf}(\vbf,\varphibf), \wbf\big\rangle,
\end{align}
where the operator $\calB_\varphibf\colon \Hsp\times \Hsp \to \Hsp^*$ is given by 
\begin{subequations}
\label{eq:calB}
\begin{equation}\label{eq:calBsum}
\langle\calB_{\varphibf}(\vbf,\ubf), \wbf\rangle 
=  \sum_{i,j=1}^p \langle\calB_{\varphibf,ij}(v_i,u_j), w_j\rangle 
\qquad \text{for all }\ubf,\vbf,\wbf\in \Hsp
\end{equation}
with 
\begin{equation}\label{eq:calBij}
\langle\calB_{\varphibf,ij}(v_i,u_j), w_j\rangle
= 2 \int_\Omega \kappa_{ij}\, \varphi_i v_i \, u_j w_j \dx, \qquad i,j=1,\ldots,p. 
\end{equation}
\end{subequations}
Similar to (\ref{eq:energy_short}), we can also express $\calB_\varphibf$ compactly as
\begin{align}\label{eq:opB}
    \big\langle \calB_\varphibf(\vbf, \ubf), \wbf \big\rangle 
    = 2 \int_\Omega (\varphibf \circ \vbf) K (\ubf \circ \wbf)^T \, \dx.
\end{align}

We now turn to the second-order derivative of the Lagrangian $\calL$ in \eqref{eq:Lagrange} with respect to the first variable. It reads 
\begin{align}
	\Drm^2_{\!\varphibf\varphibf} \calL(\varphibf,\Lambda)[\vbf,\wbf]
	= \Drm^2 \calE(\varphibf)[\vbf,\wbf] - (\vbf\,\Lambda,\wbf)_{\Lsp}
	= \langle\calA_\varphibf\,\vbf,\wbf\rangle +\langle\calB_\varphibf (\vbf,\varphibf),\wbf\rangle - (\vbf\, \Lambda,\wbf)_\Lsp
    \label{eq:HessL}
\end{align}
for all $\vbf,\wbf \in \Hsp$.
  
The following theorem  shows that the second-order derivative of the Lagrangian with respect to the first argument at a~ground state and the corresponding Lagrange multiplier is positive definite on the~tangent space to the set of constraints described by~\eqref{eq:constraints}. This result is stronger than a~second-order necessary optimality condition which requires only positive semi-definiteness. 
\begin{theorem}
\label{thm:second-orderOpt}
Let Assumptions~\textup{\textbf{A1}} and \textup{\textbf{A2}} be fulfilled and let \mbox{$\varphibf_*\in\OB$} with $\varphibf_*\geq 0$ be a~ground state of $\,\calE$ with the corresponding Lagrange multiplier $\Lambda_* = \out{\varphibf_*}{\calA_{\varphibf_*}\varphibf_*}N^{-1}$. Then
\[
    \Drm^2_{\!\varphibf\varphibf} \calL(\varphibf_*,\Lambda_*)[\zbf,\zbf] > 0
\]
for all non-zero $\zbf\in \Hsp$ such that $\out{\varphibf_*}{\zbf}=0$. 
\end{theorem}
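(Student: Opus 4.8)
The plan is to expand $\Drm^2_{\!\varphibf\varphibf}\calL(\varphibf_*,\Lambda_*)[\zbf,\zbf]$ via \eqref{eq:HessL} and to split it into a component-diagonal part coming from $\calA_{\varphibf_*}$ together with the multiplier term, plus a coupling part coming from $\calB_{\varphibf_*}$. Using \eqref{eq:calAsum} and \eqref{eq:opB}, this reads
\[
\Drm^2_{\!\varphibf\varphibf}\calL(\varphibf_*,\Lambda_*)[\zbf,\zbf]
= \sum_{j=1}^p\Big(\langle\calA_{\varphibf_*,j}z_j,z_j\rangle - \lambda_{*,j}\,\|z_j\|_{L^2(\Omega)}^2\Big)
+ 2\int_\Omega (\varphibf_*\circ\zbf)\,K\,(\varphibf_*\circ\zbf)^T \dx .
\]
First I would dispose of the coupling term: writing $\wbf=\varphibf_*\circ\zbf\in\Lsp$, its integrand is the pointwise quadratic form $\wbf(x)^T K\,\wbf(x)$, which is non-negative almost everywhere by Assumption~\textbf{A2} (positive definiteness of $K$). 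Hence the whole coupling term is $\ge 0$ and can only help.

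The heart of the argument is the component-diagonal part. By Proposition~\ref{prop:LagrEigVal}, under \textbf{A1}--\textbf{A2} each multiplier satisfies $\lambda_{*,j}=\lambda_1(\calA_{\varphibf_*,j})$, the smallest eigenvalue of the component operator, which by Lemma~\ref{lem:spectralGap} is simple with sign-definite eigenfunction $v_{j,1}$; since $\varphi_{*,j}$ solves \eqref{eq:levp1} for $\lambda_{*,j}$, simplicity forces $\varphi_{*,j}$ to be a scalar multiple of $v_{j,1}$. The tangent condition $\out{\varphibf_*}{\zbf}=0$ means $(\varphi_{*,j},z_j)_{L^2(\Omega)}=0$ for every $j$, so in the $L^2$-orthogonal eigenbasis from Lemma~\ref{lem:spectralGap} the expansion $z_j=\sum_{k\ge 1}c_{j,k}v_{j,k}$ (convergent since $z_j\in H_0^1(\Omega)$) has vanishing leading coefficient $c_{j,1}=0$. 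The Rayleigh-quotient computation then yields
\[
\langle\calA_{\varphibf_*,j}z_j,z_j\rangle - \lambda_{*,j}\,\|z_j\|_{L^2(\Omega)}^2
= \sum_{k\ge 2}\big(\lambda_k(\calA_{\varphibf_*,j})-\lambda_1(\calA_{\varphibf_*,j})\big)\,c_{j,k}^2\,\|v_{j,k}\|_{L^2(\Omega)}^2
\ge \big(\lambda_2(\calA_{\varphibf_*,j})-\lambda_1(\calA_{\varphibf_*,j})\big)\,\|z_j\|_{L^2(\Omega)}^2 ,
\]
and the last bound is strictly positive whenever $z_j\neq 0$, because simplicity of the smallest eigenvalue gives the spectral gap $\lambda_2(\calA_{\varphibf_*,j})>\lambda_1(\calA_{\varphibf_*,j})$.

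Finally I would assemble the pieces: summing over $j$, the diagonal part is bounded below by $\sum_{j=1}^p(\lambda_2(\calA_{\varphibf_*,j})-\lambda_1(\calA_{\varphibf_*,j}))\,\|z_j\|_{L^2(\Omega)}^2$, which is strictly positive as soon as some $z_j\neq 0$, that is, as soon as $\zbf\neq 0$; adding the non-negative coupling term preserves this, giving $\Drm^2_{\!\varphibf\varphibf}\calL(\varphibf_*,\Lambda_*)[\zbf,\zbf]>0$ for all nonzero tangent $\zbf$, as claimed. The main obstacle is precisely this spectral-gap step: one must correctly identify $\lambda_{*,j}$ with the \emph{simple smallest} eigenvalue of $\calA_{\varphibf_*,j}$ (so that the tangent constraint genuinely projects $z_j$ off the ground-state eigenfunction) and justify the eigenfunction expansion in the infinite-dimensional setting. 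Once that is in place, Assumption~\textbf{A2} renders the coupling term harmless and the estimate closes without any uniformity requirement across the components.
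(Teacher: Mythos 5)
Your proposal is correct and follows essentially the same route as the paper's proof: identify $\lambda_{*,j}$ with the simple smallest eigenvalue of $\calA_{\varphibf_*,j}$ via Proposition~\ref{prop:LagrEigVal}, discard the non-negative coupling term $\langle\calB_{\varphibf_*}(\zbf,\varphibf_*),\zbf\rangle\ge 0$ using Assumption~\textbf{A2}, and exploit the spectral gap after expanding each $z_j$ in the $L^2$-orthogonal eigenbasis with vanishing leading coefficient. The only cosmetic difference is that you make the Rayleigh-quotient bookkeeping slightly more explicit; the substance is identical.
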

\begin{proof}
Let $\varphibf_*=(\varphi_{*,1},\ldots,\varphi_{*,p})\geq 0$ be a~ground state of the functional $\,\calE$ with the Lagrange multiplier $\Lambda_*=\diag(\lambda_{*,1},\ldots,\lambda_{*,p})$. Furthermore, let $v_{j,1}, v_{j,2}, \ldots$ be the eigenfunctions of $\calA_{\varphibf_*,j}$ which form an~$L^2$-orthogonal basis of~$H_0^1(\Omega)$ and let \mbox{$\lambda_{1}(\calA_{\varphibf_*,j}) < \lambda_{2}(\calA_{\varphibf_*,j}) \leq \ldots$} be the corresponding eigenvalues. By Proposition~\ref{prop:LagrEigVal}, we know that $v_{j,1}=\pm\varphi_{*,j}$ and \mbox{$\lambda_{1}(\calA_{\varphibf_*,j}) = \lambda_{*,j}$} for $j=1,\ldots,p$. For any $\zbf\in\Hsp$, we obtain from Assumption~{\bf A2} and the relation \eqref{eq:opB} that $\langle\calB_{\varphibf_*}(\zbf,\varphibf_*),\zbf\rangle \geq 0$. Together with 
\eqref{eq:HessL}, this yields
\begin{align*}
    \Drm^2_{\!\varphibf\varphibf} \calL(\varphibf_*,\Lambda_*)[\zbf,\zbf]
    & = \langle\calA_{\varphibf_*}\zbf,\zbf\rangle 
        + \langle\calB_{\varphibf_*}(\zbf,\varphibf_*),\zbf\rangle
        - (\zbf\, \Lambda_*,\zbf)_\Lsp 
    \geq \langle\calA_{\varphibf_*}\zbf,\zbf \rangle 
        - \trace\, \out{\zbf\, \Lambda_*}{\zbf}.
\end{align*}
If $\out{\varphibf_*}{\zbf}=0$, then each component $z_j$ of $\zbf$
can be represented as $z_j=\sum_{\ell\geq 2} \alpha_{j,\ell}v_{j,\ell}$ with $\alpha_{j,\ell}\in\R$. For non-zero $\zbf$, this finally implies that
\begin{align*}
    \Drm^2_{\!\varphibf\varphibf} \calL(\varphibf_*,\Lambda_*)[\zbf,\zbf]
    &\geq \sum_{j=1}^p \langle\calA_{\varphibf_*,j}z_j,z_j\rangle  - \lambda_{*,j} (z_j,z_j)_{L^2(\Omega)} \notag\\
    &= \sum_{j=1}^p\, \sum_{\ell\ge 2} \lambda_{\ell}(\calA_{\varphibf_*,j})\,(\alpha_{j,\ell}v_{j,\ell},z_j)_{L^2(\Omega)}  - \lambda_{1}(\calA_{\varphibf_*,j}) (z_j,z_j)_{L^2(\Omega)} \notag\\
    &\geq \sum_{j=1}^p \big(\lambda_{2}(\calA_{\varphibf_*,j})-\lambda_{1}(\calA_{\varphibf_*,j}) \big)(z_j,z_j)_{L^2(\Omega)} > 0. \qedhere
\end{align*}
\end{proof}
It immediately follows from the proof of Theorem~\ref{thm:second-orderOpt} that $\Drm^2_{\!\varphibf\varphibf} \calL(\varphibf_*,\Lambda_*)$ is coercive on the tangent space to the constraint set with the coercivity constant $\gamma = \min\limits_{j=1,\ldots,p}(\lambda_{2}(\calA_{\varphibf_*,j})-\lambda_{1}(\calA_{\varphibf_*,j})) > 0$.
%
%
\section{Generalised oblique manifold}
\label{sec:oblique}
The ground state problem of multicomponent BECs is naturally connected to the infinite-dimen\-sio\-nal generalised oblique manifold $\OB$ defined in \eqref{eq:obligman}. In the finite-dimensional case, such a~manifold with $N = I_p$ has been investigated in \cite{AbsG06,BouAMC20,SelAGQCLA12} in the context of the approximate joint diagonalisation problem, which frequently arises in data science and engineering. Here, we extend the results on the manifold geometry from these works to the infinite-dimensional setting.

First of all, note that $\OB$ can be represented as the product manifold
\begin{equation}\label{eq:prodS}
    \OB = \Sphere_{N_1} \times \ldots \times \Sphere_{N_p}
\end{equation} 
with the spheres $\,\Sphere_{N_j} = \{\varphi \in H_0^1(\Omega) \, : \, \|\varphi\|_{L^2(\Omega)}^2 = N_j\}$ for $j = 1, \ldots, p$. 
The geometric structure of~$\,\Sphere_{N_j}$, which is a particular case of the infinite-dimensional Stiefel manifold, has been studied in~\cite{AltPS22}.
%
%
\subsection{Tangent and normal spaces}
The generalised oblique manifold $\OB$ admits a submanifold structure on the ambient space $H$.
\begin{proposition}\label{prop:submanifold}
The generalised oblique manifold $\OB$ is a~closed embedded subma\-ni\-fold of the Hilbert space $\Hsp$ of co-dimension~$p$.
\end{proposition}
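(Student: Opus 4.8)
The plan is to realise $\OB$ as the preimage of a regular value of a smooth constraint map and to invoke the submersion theorem for maps between Hilbert spaces. Concretely, I would introduce
\[
    F\colon \Hsp \to \Dp, \qquad F(\varphibf) = \out{\varphibf}{\varphibf} - N,
\]
so that $\OB = F^{-1}(0)$, and observe that the target $\Dp \cong \R^p$ is finite-dimensional. Since $\out{\,\cdot\,}{\,\cdot\,}$ is a bounded symmetric bilinear map, $F$ is a quadratic (hence $C^\infty$) map, and expanding $F(\varphibf + t\wbf)$ and collecting the linear term in $t$ gives the derivative
\[
    \Drm F(\varphibf)[\wbf] = 2\,\out{\varphibf}{\wbf}.
\]
Because $\{0\}\subset\Dp$ is closed and $F$ is continuous, $\OB = F^{-1}(0)$ is automatically closed, which settles that part of the claim.

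The crux is to verify that $0$ is a regular value, i.e.\ that $\Drm F(\varphibf)$ is surjective at every $\varphibf\in\OB$. This is elementary and explicit: given any target $\diag(d_1,\ldots,d_p)\in\Dp$, the feasibility constraint $\|\varphi_j\|_{L^2(\Omega)}^2 = N_j > 0$ guarantees $\varphi_j \neq 0$, so choosing $\wbf = (w_1,\ldots,w_p)$ with $w_j = \tfrac{d_j}{2N_j}\,\varphi_j$ yields $\Drm F(\varphibf)[\wbf] = \diag(d_1,\ldots,d_p)$. Hence $\range \Drm F(\varphibf) = \Dp$ for all $\varphibf\in\OB$.

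The point that requires genuine care in the infinite-dimensional setting — the only place where the argument departs from the finite-dimensional versions in \cite{AbsG06,SelAGQCLA12} — is that the submersion theorem on Banach manifolds additionally demands that $\Kern \Drm F(\varphibf)$ be complemented (split) in $\Hsp$. Here this is immediate: surjectivity onto the $p$-dimensional space $\Dp$ forces the kernel to have finite codimension $p$, and a closed subspace of finite codimension in a Hilbert space always admits a topological complement (for instance the $\Lsp$-orthogonal span of the explicit lifts $w_j$ constructed above). With surjectivity and the split kernel in hand, the submersion theorem shows that $\OB$ is a closed embedded submanifold of $\Hsp$ of codimension $\dim\Dp = p$. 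As an independent cross-check one could instead argue through the product representation \eqref{eq:prodS}: each factor $\Sphere_{N_j}$ is a closed embedded submanifold of $H_0^1(\Omega)$ of codimension one by the analysis in \cite{AltPS22}, and a finite product of embedded submanifolds is an embedded submanifold of the product manifold with codimensions adding up to $p$.
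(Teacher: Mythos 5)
Your proposal is correct and follows essentially the same route as the paper: the same constraint map $F(\varphibf)=\out{\varphibf}{\varphibf}-N$, the same explicit lift $\wbf$ (the paper writes it as $\vbf=\tfrac12\varphibf\,\Sigma N^{-1}$) to verify surjectivity of $\Drm F(\varphibf)$, and the submersion theorem. Your extra remark that the kernel splits because it is closed of finite codimension in a Hilbert space is a welcome point of care that the paper leaves implicit, but it does not change the argument.
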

\begin{proof}
We consider the map $F\colon \Hsp \to \Dp$ given by 
$F(\varphibf) = \out{\varphibf}{\varphibf} - N$. 
Then \mbox{$\OB = F^{-1}(0_p)$}. This implies that $\OB$ is closed, since it is the preimage of the closed set $\{0_p\}$ for the continuous map $F$. We now show that $F$ is a~submersion or, equivalently, that for all $\varphibf\in\OB$, the Fr\'echet derivative of $F$ at~$\varphibf$ given by \mbox{$\Drm \!F (\varphibf)[\vbf] = 2\, \out{\varphibf}{\vbf}$}
is surjective. Let $\Sigma \in\Dp$. Then, for \mbox{$\vbf = \frac{1}{2} \varphibf\,\Sigma N^{-1}\in \Hsp$}, we obtain 
$$
    \Drm \!F (\varphibf)[\vbf] 
    = \out{\varphibf}{\varphibf\,\Sigma N^{-1}} 
    = \Sigma.
$$ 
Thus, by the preimage theorem \cite[Th.~73.C]{Zei88}, $\OB$ is an embedded submanifold of $H$ and its co-dimension coincides with $\dim(\Dp) = p$.	
\end{proof}

The kernel of $\Drm \!F (\varphibf)$ defines the \emph{tangent space} $T_{\varphibf}\,\OB$ to $ \OB$ at $\varphibf$. It is given~by 
\begin{align*}
T_{\varphibf}\,\OB = \Kern\big(\Drm \!F (\varphibf)\big) = \big\{ \zbf \in \Hsp\enskip:\enskip \out{\varphibf}{\zbf} =0_p\big\}.
\end{align*}
In order to introduce a Riemannian metric on $\OB$, we consider a~symmetric, bounded, and coercive bilinear form $g_\varphibf\colon \Hsp\times \Hsp\to \R$ which defines an~inner product on~$H$. Furthermore, let \mbox{$\calG_\varphibf\colon \Hsp\to \Hsp^*$} be the linear bounded operator given by 
$$
  \langle \calG_\varphibf \vbf, \wbf\rangle 
  = g_\varphibf( \vbf, \wbf)\qquad \text{for all } \vbf, \wbf\in \Hsp.
$$
Due to the coercivity of~$g_\varphibf$, there exists the inverse $\calG_\varphibf^{-1}\colon \Hsp^* \to \Hsp$ of $\calG_\varphibf$, which is again linear. Note that due to the canonical inclusion $\Hsp \subset \Hsp^*$, the application of $\calG_\varphibf^{-1}$ to functions in $\Hsp$ is well-defined.
 
Since the generalised oblique manifold $\OB$ is an embedded submanifold of $H$, the restriction $g_\varphibf\colon T_{\varphibf}\,\OB\times T_{\varphibf}\,\OB\to \R$, provided that it depends smoothly on $\varphibf\in \Hsp$, defines a~Riemannian metric on~$\OB$ which turns $\OB$ into a~Riemannian manifold. Then the \emph{normal space} to $\OB$ at $\varphibf$ with respect to this metric is defined as 
\begin{align*} 
\big(T_{\varphibf}\,\OB\big)_{g_\varphibf}^\perp = \big\{ \etabf \in \Hsp\enskip:\enskip g_\varphibf(\etabf,\zbf) =0\enskip \text{for all }\zbf\in T_{\varphibf}\,\OB \big\}.
\end{align*}
Using the inverse operator $\calG_\varphibf^{-1}$, this space can be characterised as follows.

\begin{proposition}[Characterisation of the normal space]\label{prop:normal_space}
The normal space to the generalized oblique manifold $\OB$ at \mbox{$\varphibf\in\OB$} with respect to the metric~$g_\varphibf$ is given by 
$$
	\big(T_{\varphibf}\,\OB\big)_{g_\varphibf}^\perp = \big\{ \calG_\varphibf^{-1}\big(\varphibf\,\Sigma\big) \in \Hsp\enskip:\enskip \Sigma\in\Dp\big\}.
$$
\end{proposition}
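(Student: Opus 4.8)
The plan is to establish the two set inclusions separately, reformulating membership in the normal space through the operator $\calG_\varphibf$. By definition, $\etabf \in (T_\varphibf\OB)_{g_\varphibf}^\perp$ if and only if $g_\varphibf(\etabf,\zbf) = \langle \calG_\varphibf\etabf, \zbf\rangle = 0$ for every $\zbf \in T_\varphibf\OB$; equivalently, the functional $\calG_\varphibf\etabf \in \Hsp^*$ must annihilate the tangent space $T_\varphibf\OB = \{\zbf \in \Hsp : \out{\varphibf}{\zbf} = 0_p\}$. The whole argument therefore reduces to identifying those elements of $\Hsp^*$ that annihilate $T_\varphibf\OB$ and then applying the bounded inverse $\calG_\varphibf^{-1}$.

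For the inclusion $\supseteq$, I would take $\etabf = \calG_\varphibf^{-1}(\varphibf\Sigma)$ with $\Sigma \in \Dp$ and compute, for arbitrary $\zbf \in T_\varphibf\OB$,
\[
    g_\varphibf(\etabf, \zbf)
    = \langle \calG_\varphibf \etabf, \zbf\rangle
    = (\varphibf\Sigma, \zbf)_\Lsp
    = \trace\big(\Sigma\, \out{\varphibf}{\zbf}\big) = 0,
\]
where the second identity uses the Gelfand-triple embedding $\Lsp \subset \Hsp^*$ to read $\varphibf\Sigma$ as the functional $\wbf \mapsto (\varphibf\Sigma,\wbf)_\Lsp$, the third identity uses the diagonality of $\Sigma$ and $\out{\varphibf}{\zbf}$, and the final equality is the tangency condition $\out{\varphibf}{\zbf} = 0_p$. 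Hence every such $\etabf$ lies in $(T_\varphibf\OB)_{g_\varphibf}^\perp$.

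The reverse inclusion $\subseteq$ is the main point, and I would obtain it by a dimension count. The linear map $\Sigma \mapsto \calG_\varphibf^{-1}(\varphibf\Sigma)$ is injective: since $N_j > 0$ forces $\varphi_j \neq 0$, one has $\varphibf\Sigma = 0$ only for $\Sigma = 0_p$, and $\calG_\varphibf^{-1}$ is a bijection. Thus the right-hand set is a $p$-dimensional subspace, contained by the previous step in the normal space. On the other hand, because $g_\varphibf$ is equivalent to the $\Hsp$-inner product, $(\Hsp, g_\varphibf)$ is a Hilbert space in which $T_\varphibf\OB$ is closed of codimension $p$ (Proposition~\ref{prop:submanifold}), so its $g_\varphibf$-orthogonal complement $(T_\varphibf\OB)_{g_\varphibf}^\perp$ has dimension exactly $p$. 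A $p$-dimensional subspace contained in a $p$-dimensional space must coincide with it, which yields equality.

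I expect the only genuine subtlety to lie in this last codimension bookkeeping — specifically, in justifying that the $g_\varphibf$-orthogonal complement of $T_\varphibf\OB$ is finite-dimensional of dimension $p$, which rests on the closedness of $T_\varphibf\OB$ together with the norm equivalence between $g_\varphibf$ and the ambient inner product. An alternative that avoids the explicit count is to invoke the closed-range duality fact that the annihilator of $\Kern(\Drm F(\varphibf))$ in $\Hsp^*$ equals the range of the transpose of the surjection $\Drm F(\varphibf)$ from Proposition~\ref{prop:submanifold}; a direct computation identifies this range with $\{\varphibf\Sigma : \Sigma\in\Dp\}$ up to the harmless factor~$2$, after which applying $\calG_\varphibf^{-1}$ gives the claim.
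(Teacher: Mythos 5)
Your proposal is correct and follows essentially the same route as the paper: the $\supseteq$ inclusion by the identical direct computation $g_\varphibf(\calG_\varphibf^{-1}(\varphibf\Sigma),\zbf)=(\varphibf\Sigma,\zbf)_\Lsp=\trace(\Sigma\out{\varphibf}{\zbf})=0$, followed by equality because both subspaces have dimension $p$. The paper states the dimension step in one sentence; you merely fill in the details (injectivity of $\Sigma\mapsto\calG_\varphibf^{-1}(\varphibf\Sigma)$ and the codimension-$p$ fact from Proposition~\ref{prop:submanifold}), which is a faithful elaboration rather than a different argument.
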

\begin{proof}
For all $\zbf\in T_{\varphibf}\,\OB$ and $\Sigma\in\Dp$, we see that 
$$
	g_\varphibf( \calG_\varphibf^{-1}\big(\varphibf\,\Sigma\big),\zbf) 
	= (\varphibf\,\Sigma, \zbf)_{\Lsp} 
	= \trace\, \out{\varphibf\,\Sigma}{\zbf}
	= \trace\big(\Sigma\,\out{\varphibf}{\zbf}\big) 
	= 0.
$$
This implies that $\{ \calG_\varphibf^{-1}(\varphibf\,\Sigma)\in \Hsp \enskip:\enskip \Sigma\in\Dp \} \subseteq (T_{\varphibf}\,\OB )_{g_\varphibf}^\perp$. The equality follows since both spaces have dimension $p$.  
\end{proof}
The orthogonal projection $\calP_{\varphibf}\colon H\to T_{\varphibf}\,\OB$ with respect to the metric $g_\varphibf$ will be called the $g_{\varphibf}$-{\em orthogonal projection} and is characterised in terms of the inverse operator below.  
 \begin{proposition}\label{prop:proj}
	Let $\varphibf\in\OB$. Then the $g_{\varphibf}$-orthogonal projection onto the tangent space
	$T_{\varphibf}\,\OB$ is given by 
	\begin{equation}\label{eq:proj2}
		\calP_\varphibf(\ubf) = \ubf- \calG_\varphibf^{-1} \big(\varphibf \,\Sigma_{\varphibf,\ubf}\big), \qquad \ubf\in \Hsp,
	\end{equation}
where $\Sigma_{\varphibf,\ubf}\in\Dp$ is the~unique solution to the equation 
\begin{equation}\label{eq:Sigma}
\out{\varphibf}{\calG_\varphibf^{-1}( \varphibf \,\Sigma_{\varphibf,\ubf})}
=\out{\varphibf}{\ubf}.
\end{equation}
\end{proposition}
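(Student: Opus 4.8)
The plan is to characterise $\calP_\varphibf$ through the $g_\varphibf$-orthogonal decomposition $\Hsp = T_\varphibf\OB \oplus (T_\varphibf\OB)^\perp_{g_\varphibf}$, which holds because $g_\varphibf$ is a coercive inner product on $\Hsp$ and $T_\varphibf\OB$ is closed. Every $\ubf \in \Hsp$ then splits uniquely as $\ubf = \calP_\varphibf(\ubf) + \etabf$ with $\calP_\varphibf(\ubf) \in T_\varphibf\OB$ and $\etabf \in (T_\varphibf\OB)^\perp_{g_\varphibf}$. By the characterisation of the normal space in Proposition~\ref{prop:normal_space}, the normal component has the form $\etabf = \calG_\varphibf^{-1}(\varphibf\,\Sigma)$ for some $\Sigma \in \Dp$, so that $\calP_\varphibf(\ubf) = \ubf - \calG_\varphibf^{-1}(\varphibf\,\Sigma)$ necessarily takes the stated shape. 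It then remains to identify $\Sigma = \Sigma_{\varphibf,\ubf}$ and to prove that it is uniquely determined.

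The tangency requirement $\out{\varphibf}{\calP_\varphibf(\ubf)} = 0_p$ is equivalent, by linearity of $\out{\varphibf}{\cdot}$, to $\out{\varphibf}{\calG_\varphibf^{-1}(\varphibf\,\Sigma)} = \out{\varphibf}{\ubf}$, which is precisely equation~\eqref{eq:Sigma}. Thus the proposition reduces to showing that the linear map $T\colon \Dp \to \Dp$, $T(\Sigma) = \out{\varphibf}{\calG_\varphibf^{-1}(\varphibf\,\Sigma)}$, is a bijection, so that \eqref{eq:Sigma} admits a unique solution for the given right-hand side $\out{\varphibf}{\ubf}$. Since $\Dp$ has finite dimension $p$, it suffices to establish injectivity of $T$.

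The main step, and the only genuine obstacle, is this injectivity; I would argue via a coercivity/energy computation. Suppose $T(\Sigma) = 0_p$. Pairing with $\Sigma$ and using $\out{\varphibf\,\Sigma}{\wbf} = \Sigma\,\out{\varphibf}{\wbf}$ together with $(\vbf,\wbf)_\Lsp = \trace\,\out{\vbf}{\wbf}$ gives $0 = \trace\big(\Sigma\,T(\Sigma)\big) = \big(\varphibf\,\Sigma,\, \calG_\varphibf^{-1}(\varphibf\,\Sigma)\big)_\Lsp$. Identifying the $\Lsp$-inner product with the duality pairing on $\Hsp^*\times\Hsp$ via the Gelfand triple and invoking the defining relation $\langle \calG_\varphibf\vbf,\wbf\rangle = g_\varphibf(\vbf,\wbf)$, the right-hand side equals $g_\varphibf\big(\calG_\varphibf^{-1}(\varphibf\,\Sigma),\, \calG_\varphibf^{-1}(\varphibf\,\Sigma)\big)$, i.e.\ the squared $g_\varphibf$-norm of $\calG_\varphibf^{-1}(\varphibf\,\Sigma)$. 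Coercivity of $g_\varphibf$ then forces $\calG_\varphibf^{-1}(\varphibf\,\Sigma) = 0$, hence $\varphibf\,\Sigma = 0$ after applying $\calG_\varphibf$. Since $\varphibf \in \OB$ has components with $\|\varphi_j\|_{L^2(\Omega)}^2 = N_j > 0$, each $\varphi_j$ is nonzero, which yields $\Sigma = 0_p$ and completes the injectivity argument. Finally, defining $\Sigma_{\varphibf,\ubf}$ as the unique solution of \eqref{eq:Sigma}, the formula \eqref{eq:proj2} follows; the resulting $\calP_\varphibf(\ubf)$ is indeed the $g_\varphibf$-orthogonal projection because $\calP_\varphibf(\ubf) \in T_\varphibf\OB$ while $\ubf - \calP_\varphibf(\ubf) = \calG_\varphibf^{-1}(\varphibf\,\Sigma_{\varphibf,\ubf}) \in (T_\varphibf\OB)^\perp_{g_\varphibf}$.
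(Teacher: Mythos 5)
Your proof is correct and follows essentially the same route as the paper: both arguments reduce the claim to the unique solvability of the $p\times p$ system \eqref{eq:Sigma}, establish injectivity by observing that an element of the form $\calG_\varphibf^{-1}(\varphibf\,\Sigma)$ lying in the kernel of the map must belong to both the tangent and normal spaces (your pairing with $\Sigma$ computes exactly $g_\varphibf(\etabf,\etabf)$ for $\etabf=\calG_\varphibf^{-1}(\varphibf\,\Sigma)$, which is the same fact), and then invoke Proposition~\ref{prop:normal_space} to identify the complementary part as normal. The only cosmetic difference is that you obtain the projection property from the abstract $g_\varphibf$-orthogonal direct sum, whereas the paper verifies tangency and idempotency by hand.
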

\begin{proof}
First, we show that equation \eqref{eq:Sigma} is uniquely solvable and provide its explicit solution. Let $\varphibf\in\OB$ and $\Sigma_{\varphibf,\ubf}=\diag(\sigma_1,\ldots,\sigma_p)$. For every component $\varphi_j$ of $\varphibf$, we define the \mbox{$p$-fra}me \mbox{$\widehat{\varphibf}_j=(0,\ldots,0,\varphi_j,0\ldots,0)$} for $j=1,\ldots,p$. Then, due to the linearity of $\calG_\varphibf^{-1}$, we obtain that $\calG_\varphibf^{-1}(\varphibf\,\Sigma_{\varphibf,\ubf})=\sum_{i=1}^p \sigma_i\, \calG_\varphibf^{-1}\widehat{\varphibf}_i$. In this case, equation \eqref{eq:Sigma} can equivalently be written as a~linear system $\widehat{G}\sigma = \widehat{u}$ with an unknown vector $\sigma=[\sigma_1,\ldots,\sigma_p]^T$ and given 
\begin{equation}\label{eq:hatG}
\widehat{G} = \begin{bmatrix} 
\big(\varphi_1, (\calG_\varphibf^{-1}\widehat{\varphibf}_1)_1\big)_{L^2(\Omega)} & \cdots & \big(\varphi_1, (\calG_\varphibf^{-1}\widehat{\varphibf}_p)_1\big)_{L^2(\Omega)} \\
\vdots & \ddots & \vdots \\
\big(\varphi_p, (\calG_\varphibf^{-1}\widehat{\varphibf}_1)_p\big)_{L^2(\Omega)} & \cdots & \big(\varphi_p, (\calG_\varphibf^{-1}\widehat{\varphibf}_p)_p\big)_{L^2(\Omega)} 
\end{bmatrix}, 
\qquad 
\widehat{u}=\begin{bmatrix}(\varphi_1,u_1)_{L^2(\Omega)}\\[1.5mm] \vdots \\[1.5mm]
(\varphi_p,u_p)_{L^2(\Omega)}\end{bmatrix}.
\end{equation}
We now verify that the matrix $\widehat{G}$ is nonsingular. Assume that there exists a vector $\alpha=[\alpha_1,\ldots,\alpha_p]^T$ such that $\widehat{G}\alpha=0$. This implies that 
\[
\big(\varphi_i, (\calG_\varphibf^{-1}(\varphibf\diag(\alpha)))_i\big)_{L^2(\Omega)}= \sum_{j=1}^p \alpha_j\, \big(\varphi_i, (\calG_\varphibf^{-1}\widehat{\varphibf}_j)_i\big)_{L^2(\Omega)} = 0, \qquad i=1,\ldots,p,
\]
and, hence, $\calG_\varphibf^{-1}(\varphibf\diag(\alpha))\in T_\varphibf\,\OB$.
On the other hand, due to Proposition~\ref{prop:normal_space}, we have $\calG_\varphibf^{-1}(\varphibf\diag(\alpha))\in \big(T_\varphibf\,\OB\big)^\perp_{g_\varphibf}$ and thus $\calG_\varphibf^{-1}(\varphibf\diag(\alpha))=\zerobf$. This yields that $\alpha=0$ which implies that $\widehat{G}$ is nonsingular and $\Sigma_{\varphibf,\ubf}=\diag(\sigma)=\diag(\widehat{G}^{-1}\widehat{u})$ is the unique solution of \eqref{eq:Sigma}.

Next, we prove that $\calP_\varphibf$ maps on $T_\varphibf\,\OB$. Indeed, it follows from \eqref{eq:Sigma} that
\[
\out{\varphibf}{\calP_\varphibf(\ubf)}
=\out{\varphibf}{\ubf- \calG_\varphibf^{-1}( \varphibf \,\Sigma_{\varphibf,\ubf})} =  0
\] 
and, hence, $\calP_\varphibf(\ubf)\in T_{\varphibf}\,\OB$ for all $\ubf\in \Hsp$.

Further, taking into account that for all $\ubf\in T_{\varphibf}\,\OB$, equation \eqref{eq:Sigma} has only the trivial solution $\Sigma_{\varphibf,\ubf}=0_p$, we 
obtain that
\begin{align*}
\calP_\varphibf\big(\calP_\varphibf(\ubf)\big)
 = \calP_\varphibf(\ubf)- \calG_\varphibf^{-1} \big(\varphibf \,\Sigma_{\varphibf,\calP_\varphibf(\ubf)}\big)
 = \calP_\varphibf(\ubf).
\end{align*}
This shows that $\calP_{\varphibf}$ is a~projection. Finally, it follows from Proposition~\ref{prop:normal_space} that 
\[\ubf-\calP_\varphibf(\ubf) = \calG_\varphibf^{-1} (\varphibf \,\Sigma_{\varphibf,\ubf})\in (T_{\varphibf}\,\OB)_{g_\varphibf}^\perp
\] 
implying the $g_\varphibf$-orthogonality property.
\end{proof}

The computation of the projection $\calP_\varphibf$ in \eqref{eq:proj2} requires multiple inversions of the operator~$\calG_{\varphibf}$, which can be computationally expensive in practice. To reduce the computational cost, we exploit the product structure of the generalised oblique manifold in~\eqref{eq:prodS} and, inspired by \cite{GaoPY23}, equip it with a Riemannian metric defined as a sum of the metrics $g_{\varphibf,j}$ on each component $\Sphere_{N_j}$, i.e.,
\begin{equation}\label{eq:blockdiagonal}
g_\varphibf^\times(\zbf, \ybf) 
   = g_{\varphibf,1}(z_{1}, y_{1}) + \ldots + g_{\varphibf,p}(z_{p}, y_{p})
\end{equation}
for $\zbf = (z_{1}, \ldots, z_{p})$, $\ybf = (y_{1}, \ldots, y_{p}) \in T_{\varphibf}\,\OB$. Here,
\[
    g_{\varphibf,j}(z_{j}, y_{j}) 
    = \langle \calG_{\varphibf,j}^\times\, y_{j}, z_j\rangle
    \qquad \text{for all } z_{j}, y_{j} \in T_{\varphi_j}\Sphere_{N_j}
\]
with appropriate operators $\calG_{\varphibf,j}^\times\colon H_0^1(\Omega) \to H^{-1}(\Omega)$, $j = 1, \ldots, p$, smoothly depending on $\varphibf \in \Hsp$. The metric $g_{\varphibf}^\times$ has an advantage over $g_{\varphibf}$ in that all computations can be performed component-wise, which facilitates parallel computing. As a consequence of Proposition~\ref{prop:proj}, by exploiting the additive structure of the metric $g_\varphibf^\times$ in~\eqref{eq:blockdiagonal}, we obtain the following representation for the $g_\varphibf^\times$-orthogonal projection onto $T_\varphibf\,\OB$.
\begin{corollary}\label{cor:timesProj}
Define the operator $\calG_\varphibf^\times\vbf=(\calG_{\varphibf,1}^\times v_1,\ldots,\calG_{\varphibf,p}^\times v_p)$ for $\vbf\in \Hsp$. Then the $g_\varphibf^\times$-or\-tho\-go\-nal projection onto the tangent space $T_\varphibf\,(\OB$ is given by $\calP_\varphibf^\times (\ubf) = \ubf- (\calG_\varphibf^\times)^{-1} \varphibf \,\Sigma_{\varphibf,\ubf}^\times$ with the diagonal matrix $\Sigma_{\varphibf,\ubf}^\times = \out{\varphibf}{(\calG_\varphibf^\times)^{-1}\varphibf}^{-1}\out{\varphibf}{\ubf}$.
\end{corollary}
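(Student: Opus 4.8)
The plan is to specialise Proposition~\ref{prop:proj} to the product metric $g_\varphibf^\times$ from \eqref{eq:blockdiagonal} and to exploit its block-diagonal structure to solve equation~\eqref{eq:Sigma} in closed form. Setting $\calG_\varphibf = \calG_\varphibf^\times$ in \eqref{eq:proj2}, the projection automatically takes the form $\calP_\varphibf^\times(\ubf) = \ubf - (\calG_\varphibf^\times)^{-1}(\varphibf\,\Sigma_{\varphibf,\ubf}^\times)$, where $\Sigma_{\varphibf,\ubf}^\times \in \Dp$ is the unique solution of \eqref{eq:Sigma}. Since $\calG_\varphibf^\times$ acts component-wise, so does its inverse, namely $(\calG_\varphibf^\times)^{-1}\vbf = ((\calG_{\varphibf,1}^\times)^{-1}v_1,\ldots,(\calG_{\varphibf,p}^\times)^{-1}v_p)$. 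In particular, for the $p$-frames $\hat{\varphibf}_j = (0,\ldots,0,\varphi_j,0,\ldots,0)$ introduced in the proof of Proposition~\ref{prop:proj}, the image $(\calG_\varphibf^\times)^{-1}\hat{\varphibf}_j$ has a nonzero entry only in its $j$-th slot, equal to $(\calG_{\varphibf,j}^\times)^{-1}\varphi_j$.

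First I would substitute this observation into the matrix $\hat{G}$ from \eqref{eq:hatG}. Because $\big((\calG_\varphibf^\times)^{-1}\hat{\varphibf}_j\big)_i = 0$ whenever $i \neq j$, all off-diagonal entries of $\hat{G}$ vanish, so $\hat{G}$ collapses to the diagonal matrix with entries $\big(\varphi_j, (\calG_{\varphibf,j}^\times)^{-1}\varphi_j\big)_{L^2(\Omega)}$, which is precisely $\out{\varphibf}{(\calG_\varphibf^\times)^{-1}\varphibf}$. The right-hand side vector $\hat{u}$ from \eqref{eq:hatG} has entries $(\varphi_j, u_j)_{L^2(\Omega)}$, so that $\diag(\hat{u}) = \out{\varphibf}{\ubf}$. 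Solving the resulting diagonal system $\hat{G}\sigma = \hat{u}$ then yields $\Sigma_{\varphibf,\ubf}^\times = \diag(\hat{G}^{-1}\hat{u}) = \out{\varphibf}{(\calG_\varphibf^\times)^{-1}\varphibf}^{-1}\out{\varphibf}{\ubf}$, as claimed; the invertibility of $\hat{G}$ follows either from the general nonsingularity argument in the proof of Proposition~\ref{prop:proj} or directly from the positivity of its diagonal entries, which is a consequence of the coercivity of each $g_{\varphibf,j}$.

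Finally, I would simplify the term $(\calG_\varphibf^\times)^{-1}(\varphibf\,\Sigma_{\varphibf,\ubf}^\times)$. Because each component operator is linear, $\calG_\varphibf^\times$ commutes with right multiplication by a diagonal matrix $\Lambda \in \Dp$, i.e. $\calG_\varphibf^\times(\vbf\Lambda) = (\calG_\varphibf^\times\vbf)\Lambda$, and the same identity holds for the inverse. Hence $(\calG_\varphibf^\times)^{-1}(\varphibf\,\Sigma_{\varphibf,\ubf}^\times) = (\calG_\varphibf^\times)^{-1}\varphibf\,\Sigma_{\varphibf,\ubf}^\times$, which delivers the stated formula for $\calP_\varphibf^\times$. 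I do not expect a substantial obstacle here: the whole argument is an instantiation of Proposition~\ref{prop:proj}, and the only genuinely new point worth isolating is that the product structure of $g_\varphibf^\times$ diagonalises $\hat{G}$, collapsing the coupled $p \times p$ system \eqref{eq:Sigma} into $p$ decoupled scalar equations that can be written compactly through the diagonal bracket $\out{\cdot}{\cdot}$.
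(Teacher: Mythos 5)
Your argument is correct and follows exactly the route the paper takes: it specialises Proposition~\ref{prop:proj} and observes that the component-wise action of $\calG_\varphibf^\times$ makes the matrix $\hat{G}$ from \eqref{eq:hatG} diagonal with entries $(\varphi_j,(\calG_{\varphibf,j}^\times)^{-1}\varphi_j)_{L^2(\Omega)}$, which collapses \eqref{eq:Sigma} into decoupled scalar equations. The extra details you supply (commutation of $\calG_\varphibf^\times$ with right multiplication by diagonal matrices, positivity of the diagonal entries) are sound and merely make explicit what the paper leaves implicit.
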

\begin{proof}
The result follows from Proposition~\ref{prop:proj} by using the fact that for the metric $g_\varphibf^\times$ in~\eqref{eq:blockdiagonal}, the matrix~$\widehat{G}$ given in \eqref{eq:hatG} is diagonal with entries $(\varphi_j,(\calG_{\varphibf,j}^\times)^{-1}\varphi_j)_{L^2(\Omega)}$ on the diagonal.
\end{proof}
%
%
\subsection{Riemannian gradients and Hessians}

The \emph{Riemannian gradient} of a~Fr\'echet differentiable functional $\calE$ defined on the manifold $\OB$ with respect to the metric $g_\varphibf$ is the unique element $\grad \calE(\varphibf)\in T_\varphibf\,\OB$ which satisfies the condition
\[
g_\varphibf(\grad \calE(\varphibf), \zbf)=\Drm \calE(\varphibf)[\zbf] \qquad \text{ for all }\zbf\in T_\varphibf\,\OB.
\]
The Riemannian gradient of the energy functional $\,\calE$ in \eqref{eq:energy} with respect to $g_\varphibf$ can be represented~as 
\begin{equation}\label{eq:grad}
    \grad \calE(\varphibf) 
    = \calP_\varphibf\big(\calG_\varphibf^{-1} \Drm{\calE}(\varphibf)\big) 
    = \calG_\varphibf^{-1}\calA_\varphibf^{}\,\varphibf-\calG_\varphibf^{-1}\big(\varphibf\,\Sigma_{\varphibf,\calG_\varphibf^{-1}\calA_\varphibf\varphibf}\big),
\end{equation}
where $\Sigma_{\varphibf,\calG_\varphibf^{-1}\calA_\varphibf\varphibf}$ is as in Proposition~\ref{prop:proj} with $\ubf=\calG_\varphibf^{-1}\calA_\varphibf^{}\,\varphibf$. 

Consider the linear operator $\calR_\varphibf\colon \Hsp \to \Hsp^* $ defined by 
\[
    \calR_\varphibf\, \vbf 
    = \calA_\varphibf\,\vbf - \varphibf\out{\varphibf}{\calA_\varphibf\,\vbf}N^{-1}.
\]
Just as the Hamiltonian~$\calA_\varphibf$, the operator $\calR_\varphibf$ acts component-wise with the component operators denoted by~$\calR_{\varphibf,j}$. 
For all $\vbf\in\Hsp$ with $\calA_\varphibf\,\vbf\in\Hsp$, we have $\out{\varphibf}{\calR_\varphibf\, \vbf}=0$. This implies that \mbox{$\calR_\varphibf\, \vbf \in T_\varphibf\OB$}. Note that $\calR_\varphibf\, \varphibf$ gives the~residual of the NLEVP \eqref{eq:nlevp}. It can also be used to obtain an alternative representation for the Riemannian gradient, namely
\[
\grad \calE(\varphibf) 
= \calP_\varphibf\big(\calG_\varphibf^{-1}\calR_\varphibf\, \varphibf \big)
= \calG_\varphibf^{-1} \calR_\varphibf^{}\, \varphibf   - \calG_\varphibf^{-1}(\varphibf \, \Sigma_{\varphibf,\,\calG_\varphibf^{-1}\calR_\varphibf\varphibf}),  
\]
where $\Sigma_{\varphibf,\calG_\varphibf^{-1}\calR_\varphibf\varphibf}$ is as in Proposition~\ref{prop:proj} with $\ubf=\calG_\varphibf^{-1}\calR_\varphibf^{}\,\varphibf$. 

Let the Riemannian manifold $(\OB,g_\varphibf)$ be endowed with the~Riemannian connection $\bm{\nabla}$. The \emph{Riemannian Hessian} of a~twice differentiable functional $\calE$, denoted by $\Hess\calE(\varphibf)$, is the linear mapping $\Hess\calE(\varphibf)\colon T_\varphibf\,\OB\to T_\varphibf\,\OB$ given by
$$
\Hess \calE(\varphibf)[\zbf]=\bm{\nabla}_{\!\zbf} \grad\calE(\varphibf) \qquad \text{ for all }\zbf \in T_\varphibf\,\OB,
$$
where $\bm{\nabla}_{\!\zbf}$ denotes the covariant derivative along $\zbf$ with respect to the connection $\bm{\nabla}$. If the metric does not depend on $\varphibf$, the Riemannian Hessian admits the expression 
\begin{equation}\label{eq:RHess}
\Hess \calE(\varphibf)[\zbf] = \calP_\varphibf\big( \Drm\grad\calE(\varphibf)[\zbf]\big),
\end{equation}
which can be shown similarly to the finite-dimensional case \cite[Prop.~5.3.2]{AbsiMS08}.
%
%
\subsection{Choices of metric}\label{sec:metrics}
We introduce several metrics on the manifold $\OB$ and discuss the corresponding geometric concepts. In particular, using first- and second-order information of the energy, we construct specific metrics that possess a preconditioning effect when used in Riemannian optimisation.
%
\subsubsection{\texorpdfstring{$L^2$}--metric}\label{ssec:L2metric}
Let $\calG_\Lsp\colon \Hsp\to\Hsp^*$ denote the canonical identification operator defined by 
\[
\langle \calG_\Lsp\vbf,\wbf\rangle 
= (\vbf,\wbf)_\Lsp\qquad \text{for all } \vbf,\wbf\in\Hsp.
\]
This allows us to equip the generalised oblique manifold $\OB$ with the {\em $L^2$-metric}
\begin{equation}\label{eq:metricH}
g_{\Lsp}(\zbf,\ybf) 
= \langle\calG_\Lsp\zbf,\ybf\rangle \qquad \text{for all }\zbf,\ybf\in T_\varphibf\,\OB,
\end{equation}
which is independent of $\varphibf$. Note that this metric violates our assumptions above, as it is not coercive with respect to the $\Hsp$-norm and, in particular, the tangential space $T_\varphibf\OB$ is not complete with respect to the metric $g_{\Lsp}$. As a~consequence, the operator $\calG_\Lsp$ is not necessarily invertible and the Riemannian gradient with respect to $g_\Lsp$ may not exist for all $\varphibf$. In this subsection, however, we will always assume that all formulas are well-defined and, in particular, that $\varphibf\in \Hsp$ is such that $\calA_\varphibf\, \varphibf\in \Hsp$. 

As the metric $g_{\Lsp}$ is of the additive form~\eqref{eq:blockdiagonal}, the $g_L$-orthogonal projection onto $T_\varphibf\,\OB$ reads
\begin{equation}\label{eq:projH}
\calP_{\varphibf,L} (\ubf) = \ubf - \varphibf \out{\varphibf}{\ubf}N^{-1},
\qquad \ubf\in \Hsp.
\end{equation}
The Riemannian gradient of $\,\calE$ in~\eqref{eq:energy} with respect to $g_{\Lsp}$ then takes the form
\begin{align}\label{eq:gradH}
    \grad_{\Lsp} \calE(\varphibf) 
    = \calP_{\varphibf,L}\big(\Drm{\calE}(\varphibf)\big)
    = \calA_\varphibf\,\varphibf-\varphibf\,\out{\varphibf}{\calA_\varphibf\,\varphibf}N^{-1}
    = \calR_\varphibf\,\varphibf.
\end{align}
Furthermore, using \eqref{eq:HessE}, we obtain from \eqref{eq:RHess} the Riemannian Hessian 
\begin{align}
	\Hess_{\Lsp} \calE(\varphibf)[\zbf] 
       & = \calP_{\varphibf,L} \big(\calA_\varphibf\,\zbf 
		+ \calB_{\varphibf}(\zbf,\varphibf) 
        - \zbf \out{\varphibf}{\calA_\varphibf\,\varphibf}N^{-1}\big) \notag\\
		& =  \calA_\varphibf\,\zbf 
		+ \calB_{\varphibf}(\zbf,\varphibf)
  - \varphibf\out{\varphibf}{\calA_\varphibf\,\zbf + \calB_{\varphibf}(\zbf,\varphibf)}N^{-1} -\zbf\, \out{\varphibf}{\calA_\varphibf\,\varphibf}N^{-1}. \label{eq:HessH}
\end{align}
Note that for a~constrained critical point $\varphibf^*\in\OB$ of $\,\calE$ and the corresponding Lagrange multiplier~$\Lambda^*$, the Riemannian Hessian of $\,\calE$ and the second-order derivative of the Lagrange functional~$\calL$ are related to each other via $\Hess_{\Lsp} \calE(\varphibf^*)[\zbf] =\calP_{\varphibf^*\!,\,\Lsp}\big(\Drm^2_{\!\varphibf\varphibf}\calL(\varphibf^*,\Lambda^*)[\zbf]\big)$ for all tangent vectors $\zbf\in T_{\varphibf^*}\OB$. Then it follows from Theorem~\ref{thm:second-orderOpt} that for a~ground state $\varphibf_*\geq 0$, the Riemannian Hessian 
$\Hess_{\Lsp} \calE(\varphibf_*)$ is positive definite on the tangent space $T_{\varphibf_*}\OB$.
%
%
\subsubsection{Energy-adaptive metric}\label{sec:energymetric}
Consider the bilinear form $a_\varphibf$ introduced in~\eqref{eq:aphi}. If it is not coercive, then, based on Proposition~\ref{prop:aphiGarding}, we replace it by the coercive bilinear form
\[
\ahat_\varphibf(\vbf,\wbf)=a_\varphibf(\vbf, \wbf) + \widehat{c}_\varphibf (\vbf, \wbf)_\Lsp,
\]
where the~constant $\widehat{c}_\varphibf \geq c_\varphibf>0$ with $c_\varphibf$ as in the  G{\aa}rding inequality \eqref{eq:Garding} can be chosen to depend smoothly on $\varphibf$. This allows us to define an~alternative Riemannian metric on the generalised oblique manifold $\OB$, namely
\begin{equation}\label{eq:metr_a}
g_{\varphibf,a}(\zbf, \ybf)=\ahat_\varphibf(\zbf,\ybf) \qquad \text{for all } \zbf,\ybf\in T_\varphibf\,\OB.
\end{equation}
It is referred to as the \emph{energy-adaptive metric}. 

For $\varphibf\in\OB$, let $\calAhat_\varphibf:H\to H^*$ be defined by
\begin{equation}\label{eq:calAhat}
\langle\calAhat_\varphibf\vbf,\wbf\rangle=\ahat_\varphibf(\vbf,\wbf)\qquad \text{for all } \vbf,\wbf\in H.
\end{equation}
Then it follows from Corollary~\ref{cor:timesProj} with $\calG_\varphibf^\times = \calAhat_\varphibf$, which obviously acts component-wise, that the {\em $\ahat_\varphibf$-orthogonal projection} onto the tangent space $T_\varphibf\,\OB$ is given by 
$$
\calP_{\varphibf,a}(\ubf) = \ubf-\calAhat_\varphibf^{-1}\varphibf \out{\varphibf}{\ubf}
\out{\varphibf}{\calAhat_{\varphibf}^{-1}\varphibf}^{-1}.
$$
Observe that due to $\langle\calA_\varphibf \varphibf,\zbf\rangle = \langle\calAhat_\varphibf\varphibf, \zbf\rangle$ for all $\zbf \in T_\varphibf\OB$, we have 
\[
\calP_{\varphibf,a}(\calAhat_\varphibf^{-1}\calA_\varphibf\varphibf) = \calP_{\varphibf,a}(\varphibf).
\]
Therefore, using \eqref{eq:grad}, the Riemannian gradient of~$\,\calE$ in 
\eqref{eq:energy} with respect to the energy-adaptive metric~$g_{\varphibf,a}$ is determined as
\begin{align}\label{eq:grad_a} 
\grad_a\calE(\varphibf) 
& =\calP_{\varphibf,a}(\varphibf)
=\varphibf-\calAhat_\varphibf^{-1}\varphibf\, N\out{\varphibf}{\calAhat_\varphibf^{-1}\varphibf}^{-1}
= \calP_{\varphibf,a}\big(\calAhat_\varphibf^{-1}\calR_\varphibf^{}\,\varphibf\big). 
\end{align}
Using the relation $\calAhat_\varphibf^{-1}\varphibf = \big(\varphibf - \calAhat_\varphibf^{-1}\calR_\varphibf^{}\,\varphibf\big)\out{\varphibf}{\calAhat_\varphibf\,\varphibf}^{-1}N$, which follows directly from the definition of $\calR_\varphibf\,\varphibf$, it can further be represented as
\begin{equation}\label{eq:grad_a_R}
\grad_a\calE(\varphibf) 
= \varphibf - \big(\varphibf - \calAhat_\varphibf^{-1}\calR_\varphibf^{}\,\varphibf\big)\big(I_p - N^{-1}\out{\varphibf}{\calAhat_\varphibf^{-1}\calR_\varphibf^{}\,\varphibf}\big)^{-1}.
\end{equation}

The Riemannian Hessian with respect to $g_{\varphibf,a}$ can be calculated with the help of \cite[Th.~3.1]{Ngu23}. This, however, is beyond the scope of the present paper.
%
%
\subsubsection{Lagrangian-based metric} 
\label{ssec:LagrangeMetric}
In \cite{KreSV16, MisS16, ShuA23}, different strategies for constructing Riemannian metrics have been proposed in the context of Riemannian preconditioning for solving various optimisation problems on matrix manifolds. They all aim to speed up the convergence of Riemannian optimisation methods by exploiting the second-order  information of the cost functional and possibly constraints. Here, we adapt the approaches from \cite{MisS16} and \cite{GaoPY23} to the infinite-dimensional setting with product manifold structure and develop a~new family of Riemannian metrics by considering the second-order derivative of a~regularised Lagrange functional and neglecting its off-diagonal blocks with a~goal to trade-off between the computational cost and efficiency.   

Let us start with introducing a regularised Lagrange functional 
\[
\calL_{\omega}(\varphibf,\Lambda)
=\calE(\varphibf)- \frac{\omega}{2}\trace\Big(\Lambda \big(\out{\varphibf}{\varphibf}-N\big)\Big) 
\]
with an~appropriately chosen regularisation parameter $\omega \geq 0$. Note that for $\omega=1$, $\calL_{\omega}$ coincides with the Lagrangian $\calL$ in \eqref{eq:Lagrange}. The second-order derivative of $\calL_\omega$ with respect to $\varphibf$  has the form
\begin{equation}\label{eq:L2HessLreg}
\Drm_{\varphibf\varphibf}^2\calL_\omega(\varphibf,\Lambda)[\vbf] 
= \calA_\varphibf \vbf+\calB_\varphibf(\vbf,\varphibf)- \omega\,\vbf\,\Lambda, \qquad \vbf\in\Hsp,
\end{equation}
with $\calA_\varphibf$ and $\calB_\varphibf$ given in \eqref{eq:calA} and \eqref{eq:calB}, respectively. We define an~operator $\calG_{\varphibf,\omega}\colon H\to H^*$ acting component-wise $\calG_{\varphibf,\omega}\vbf=(\calG_{\varphibf,\omega,1}v_1,\ldots,\calG_{\varphibf,\omega,p}v_p)$, where  
\[
\calG_{\varphibf,\omega,j} v_j 
=  \calA_{\varphibf,j}v_j+\calB_{\varphibf,jj}(v_j,\varphi_j)-\omega\,\lambda_{j}v_j, \qquad j=1,\ldots, p,
\]
are obtained from \eqref{eq:L2HessLreg} by neglecting $\calB_{\varphibf,ij}$ with $i\neq j$ and setting the diagonal elements of~$\Lambda$ to \mbox{$\lambda_{j}=\langle\calA_{\varphibf,j}\varphi_j,\varphi_j\rangle/N_j$}, $j=1,\ldots,p$. 
Choosing $\omega$ such that the bilinear form $g_{\varphibf,\omega}(\vbf,\wbf)=\langle\calG_{\varphibf,\omega}\vbf,\wbf\rangle$ is coercive, we can define a~new metric on the generalised oblique manifold $\OB$~via  
\begin{equation}\label{eq:Lagr_metric}
g_{\varphibf,\omega}(\zbf,\ybf) 
= \langle\calG_{\varphibf,\omega}\,\zbf,\ybf\rangle
= \sum_{j=1}^p \langle\calG_{\varphibf,\omega,j}z_{j},y_{j}\rangle \qquad 
\text{for all } \zbf, \ybf\in T_{\varphibf}\,\OB.
\end{equation}
Due to the additive structure of this metric, the corresponding Riemannian gradient of the energy functional $\,\calE$ is given by 
\[
\grad_{\omega} \calE(\varphibf) = 
\calG_{\varphibf,\omega}^{-1}\calR_\varphibf^{}\,\varphibf- \calG_{\varphibf,\omega}^{-1}\,\varphibf\,\out{\varphibf}{\calG_{\varphibf,\omega}^{-1}\calR_\varphibf^{}\,\varphibf}\out{\varphibf}{\calG_{\varphibf,\omega}^{-1}\,\varphibf}^{-1}. 
\]

In summary, we see that different metrics result in distinct Riemannian gradients. The choice of the metric, as we will see in what follows, significantly influences the performance of the Riemannian optimisation schemes. A carefully chosen metric enables efficient optimisation by ensuring that the geometry of the generalised oblique manifold~$\OB$ is respected in every optimisation step. 
%
%
\section{Riemannian optimisation methods}
\label{sec:methods}
This section presents several Riemannian optimisation schemes for solving the energy minimisation problem \eqref{eq:min}. To facilitate this, we need a~retraction, which allows movement in a~tangent direction while remaining on the manifold. This can be realised by using a~component-wise normali\-sation operator $\calN\colon \Hsp\to\OB$ defined~as
\[
\calN(\vbf) = \vbf \out{\vbf}{\vbf}^{-1/2} N^{1/2}, \qquad \vbf \in \Hsp.
\]
Clearly, for all $\varphibf \in \OB$ and $\zbf \in T_\varphibf\,\OB$, $\calN(\varphibf+\zbf)$ is well-defined, as the matrix $\out{\varphibf + \zbf}{\varphibf + \zbf} = N + \out{\zbf}{\zbf}$ is invertible.
Moreover, for the origin $\zerobf_\varphibf \in T_\varphibf\,\OB$, we have $\calN(\varphibf+\zerobf_\varphibf) = \varphibf$, and for all $\zbf \in T_\varphibf\,\OB$, the local rigidity condition $\frac{{\rm d}}{{\rm d}t} \calN(\varphibf+t\zbf)\big|_{t=0} = \zbf$ is satisfied. This shows that the mapping $(\varphibf,\zbf)\mapsto \calN(\varphibf+\zbf)$ on the tangent bundle of $\OB$ indeed defines a~retraction on this manifold. 
%
%
\subsection{Preconditioned Riemannian gradient descent methods}\label{sec:methods:RGD}
A simple first-order optimisation scheme is the gradient descent method. In the context of Riemannian optimisation, this iterative procedure consists of the following steps: move in the tangent space along a~descent direction given by the negative Riemannian gradient with a certain step size and then retract the resulting tangent vector back onto the manifold. In a general setting, the {\em Riemannian gradient descent} (RGD) {\em method} is described in Algorithm~\ref{alg:RGD}.

\begin{algorithm}[ht]
		\caption{Riemannian gradient descent method}
		\label{alg:RGD}
	\begin{algorithmic}
			\Require Metric $g_\varphibf$ on $\OB$, 
                    initial guess $\varphibf_0\in\OB$.
            \vspace{0.5em}
			\For{$k=0, 1,\ldots$} 
			\State Compute a~search direction $\zbf_k=-\grad \calE(\varphibf_k)$. 
            \State Choose a~step size $\tau_k>0$. 
			\State Update $\varphibf_{k+1} = \calN(\varphibf_k+\tau_k \zbf_k)$.
			\EndFor
	\end{algorithmic}
\end{algorithm}
The convergence of the RGD method is strongly influenced by the step size, which can be determined using, e.g., the non-monotone line search procedure combined with the alternating Barzilai--Borwein step size strategy \cite{WenY13,ZhaH04}. It should also be noted that the RGD method depends on the choice of retraction and, most importantly, on the selected metric. In what follows, we briefly describe the resulting RGD schemes for the energy minimisation problem~\eqref{eq:min} based on the metrics considered in Section~\ref{sec:metrics} and highlight relationships between them.
%
%
\subsubsection{Preconditioned \texorpdfstring{$L^2$}--Riemannian gradient descent method}\label{ssec:pl2RGD}
The choice of the $L^2$-metric $g_{\Lsp}$ defined in \eqref{eq:metricH} in Algorithm~\ref{alg:RGD} yields the $L^2$-RGD method which can be interpreted as a power iteration with shifting. In electronic structure calculations, this method is also known as the direct minimisation algorithm \cite{PayTAAJ92,SchRNB09}. 
However, due to its explicit nature, it generally requires preconditioning which leads to the iteration
\[
	\varphibf_{k+1} = \calN\Big(\varphibf_k -\tau_k\,\calP_{\varphibf_k,L}\big(\calC_{\varphibf_k}^{-1} \big( \calA_{\varphibf_k}\varphibf_k - \varphibf_k\out{\varphibf_k}{\calA_{\varphibf_k}\varphibf_k}N^{-1} \big) \big) \Big)
\]
with a preconditioner $\calC_{\varphibf_k}$. Formally, this scheme requires a sufficiently smooth starting value to be well-defined. 
Different physics-based preconditioners related to the Laplace and Thomas–Fermi approximations have been developed in the literature \cite{AntD14a,AntLT17}. Alternatively, following \cite{AltPS22}, we propose the preconditioner \mbox{$\calC_{\varphibf_k}=\calAhat_{\varphibf_k}$}. Then using expression~\eqref{eq:projH}, we obtain the {\em energy-precon\-di\-tio\-ned $L^2$-RGD method} 
\begin{equation}\label{eq:pL2RGD}
	\varphibf_{k+1} = \calN\Big(\varphibf_k -\tau_k\, \big(\varphibf_k-\calAhat_{\varphibf_{k}}^{-1}\varphibf_kN\out{\varphibf_k}{\calAhat_{\varphibf_k}^{-1}\varphibf_k}^{-1} \big) \Theta_{\varphibf_k} \Big)
 \end{equation}
 with the diagonal matrix $\Theta_{\varphibf_k} = N^{-1}\out{\varphibf_k}{\calAhat_{\varphibf_k}^{-1}\varphibf_k}\out{\varphibf_k}{\calAhat_{\varphibf_k}\varphibf_k}N^{-1}$.

\subsubsection{Energy-adaptive Riemannian gradient descent method}\label{ssec:eaRGD}
Endowing $\OB$ with the energy-adaptive metric $g_{\varphibf,a}$ defined in \eqref{eq:metr_a} and using \eqref{eq:grad_a}  and~\eqref{eq:grad_a_R}, we obtain the \emph{energy-adaptive Riemannian gradient descent {\em (eaRGD)} method} 
\begin{align}
	\varphibf_{k+1} & = \calN\Big(\varphibf_k -\tau_k\, \big(\varphibf_k-\calAhat_{\varphibf_{k}}^{-1}\varphibf_k N \out{\varphibf_k}{\calAhat_{\varphibf_k}^{-1}\varphibf_k}^{-1} \big) \Big) \label{eq:eaRGD}\\
 & = \calN\Big(\varphibf_k -\tau_k\, \big(\varphibf_k - \big(\varphibf_k - \calAhat_{\varphibf_k}^{-1}\calR_{\varphibf_k}^{}\varphibf_k\big)\big(I_p - N^{-1}\out{\varphibf_k}{\calAhat_{\varphibf_k}^{-1}\calR_{\varphibf_k}^{}\varphibf_k}\big)^{-1}\big) \Big)\label{eq:eaRGD_R}.
\end{align}
This method can be interpreted as the inverse subspace iteration for $\calAhat_\varphibf$ enhanced with adaptive damping, where $\tau_k$ represents the damping parameter; see~\cite{Hen23,HenP20} for the case $p=1$. For coercive~$a_\varphibf$ and $\tau_k=1$, it reduces to the inverse subspace iteration method, known also as the \mbox{$\calA$-method}. Very similar schemes, such as the discrete normalised gradient flows~\cite{BaoD04}, result from an implicit--explicit time discretisation of the continuous $L^2$-Riemannian gradient flow; see also~\cite{DanP17,HenJ23}.

The eaRGD method \eqref{eq:eaRGD} differs from the energy-preconditioned $L^2$ -RGD iteration \eqref{eq:pL2RGD} only by the diagonal matrix $\Theta_{\varphibf_k}$. 
Since at a critical point, $\calAhat_\varphibf\,\varphibf = \varphibf\,(\Lambda+\widehat{c}_\varphibf I_p)$ and $\calAhat_\varphibf^{-1}\,\varphibf = \varphibf\,(\Lambda+\widehat{c}_\varphibf I_p)^{-1}$, the RGD methods~\eqref{eq:pL2RGD} and~\eqref{eq:eaRGD} are even asymptotically equivalent. In the numerical experiments reported in Section~\ref{sec:numerics},  we implement \eqref{eq:eaRGD_R}, as it explicitly includes the residual $\calR_{\varphibf_k}\varphibf_k$.
%
%
\subsubsection{Lagrangian-based Riemannian gradient descent method}\label{ssec:LgrRGD}
By considering the Lagran\-gian-based metric $g_{\varphibf,\omega}$ as defined in \eqref{eq:Lagr_metric}, we derive the \emph{Lagrangian-based Riemannian gradient descent {\em (LgrRGD)} method} 
\begin{equation}\label{eq:LgrRGD}
	\varphibf_{k+1} = \calN\Big(\varphibf_k -\tau_k\,\big(\calG_{\varphibf_k,\omega_k}^{-1} \calR_{\varphibf_k}^{}\varphibf_k - \calG_{\varphibf_k,\omega_k}^{-1}\varphibf_k	\,\Sigma_{\varphibf_k,\omega_k} \big) \Big)
\end{equation}
with the diagonal matrix $\Sigma_{\varphibf_k,\omega_k} 
 =\out{\varphibf_k}{\calG_{\varphibf_k,\omega_k}^{-1}\calR_{\varphibf_k}^{}\varphibf_k}
\out{\varphibf_k}{\calG_{\varphibf_k,\omega_k}^{-1}\varphibf_k}^{-1}$ and the regularisation parameter $\omega_k>0$ guaranteeing that $\calG_{\varphibf_k,\omega_k}$ is coercive and, in particular, invertible. A~possible choice for $\omega_k$ can be found in~\cite{MisS16}. In practice, however, we observed that after initialising the algorithm sufficiently close to a~ground state (e.g.~by applying a~few steps of eaRGD first), simply choosing $\omega_k = 1$ usually works well for the case where the interaction matrix $K$ has only positive entries. Comparing the computational complexity of the eaRGD method~\eqref{eq:eaRGD_R} and the LgrRGD method~\eqref{eq:LgrRGD}, we see that each iteration of the latter requires the solution of two linear operator equations with the same operator $\calG_{\varphibf_k,\omega}$ and the right-hand sides $\calR_{\varphibf_k}\varphibf_k$ and~$\varphibf_k$, while the former involves the solution of a single equation with the operator~$\calA_{\varphibf_k}$ and the right-hand side~$\calR_{\varphibf_k}\varphibf_k$.
%
%
\subsection{Alternating Riemannian gradient descent method}
In the above RGD schemes, the metric is of additive form \eqref{eq:blockdiagonal}, so that the components $\varphi_{k+1,j}$ of the new iteration~$\varphibf_{k+1}$ can be calculated independently, giving rise to possible parallelisation.  
Alternatively, inspired by the alternating approach presented in \cite{HuaY24}, the components of the new iteration can be computed sequentially using the components most recently calculated. This leads to the {\em alternating RGD method} presented in Algorithm~\ref{alg:alternatingRGD}, which is formulated for a~general metric $g_\varphibf^\times$ of the form~\eqref{eq:blockdiagonal} induced by an~operator~$\calG_\varphibf^\times$ acting component-wise. Taking either $\calG_\varphibf^\times=\calAhat_\varphibf$ or $\calG_\varphibf^\times=\calG_{\varphibf,\omega}$, we get the corresponding alternating versions of the eaRGD and LgrRGD methods, where for the eaRGD method, we can use \eqref{eq:eaRGD} or \eqref{eq:eaRGD_R} to reduce the number of linear systems to be solved per alternating step by one. The alternating LgrRGD method is similar to the alternating Newton--Noda method presented in~\cite{HuaY24}, but with a different choice for the Lagrange multiplier.

\begin{algorithm}[ht]
		\caption{Alternating Riemannian gradient descent method}
		\label{alg:alternatingRGD}
	\begin{algorithmic}
			\Require Operator $\calG_\varphibf^\times$ defining the metric $g_\varphibf^\times$ on~$\OB$, initial guess \mbox{$\varphibf_0\in\OB$}.
            \vspace{0.5em}
            \For{$k=0, 1,\ldots$}
                \State Set $\varphibf_{k+1}=\varphibf_k$. 
                \For{$j=1,\ldots, p$}
                    \State Solve $\calG_{\varphibf_{k+1},j}^\times \,v= \calR_{\varphibf_{k+1},j}\,\varphi_{k,j}$ for $v$.
                    \State Solve $\calG_{\varphibf_{k+1},j}^\times \,w=\varphi_{k,j}$ for $w$. 
                	\State Compute $z=v-\frac{(\varphi_{k,j},v)_{L^2(\Omega)}}{(\varphi_{k,j},w)_{L^2(\Omega)}}\, w$. 
            \State Choose a~step size $\tau_k>0$. 
			\State Update $\varphi_{k+1,j} = \sqrt{N_j}\,\frac{\varphi_{k,j}+\tau_k z}{\|\varphi_{k,j}+\tau_k z\|_{L^2(\Omega)}}$.  
			\EndFor
   \EndFor
	\end{algorithmic}
\end{algorithm}
%
%
\subsection{Riemannian Newton and Newton-type methods}
\label{sec:methods:Newton}
In the \emph{Riemannian Newton~{\em(RN)} method}, for given iterate $\varphibf_{k}\in\OB$, the search direction \mbox{$\zbf_k\in T_{\varphibf_k}\OB$} is computed by solving the Newton equation 
\begin{equation}\label{eq:NewtonEq}
\Hess\calE(\varphibf_k)[\zbf_k]=-\grad\calE(\varphibf_k),
\end{equation}
leading to the next iterate $\varphibf_{k+1} = \calN\big(\varphibf_k+\zbf_k\big)$. The resulting method is given in Algorithm~\ref{alg:Newton}.
\begin{algorithm}[ht]
		\caption{Riemannian Newton method}
		\label{alg:Newton}
	\begin{algorithmic}
			\Require Metric $g_\varphibf$ on $\OB$, 
                     initial guess \mbox{$\varphibf_0\in\OB$}.
            \vspace{0.5em}
			\For{$k=0, 1,\ldots$}             
            \State Solve $\Hess\calE(\varphibf_k)[\zbf_k]=-\grad\calE(\varphibf_k)$ for $\zbf_k\in T_{\varphibf_k}\OB$. 
            \State Update $\varphibf_{k+1} = \calN\big(\varphibf_k+\zbf_k\big)$.
            \EndFor
	\end{algorithmic}
\end{algorithm}

In the $L^2$-metric, due to \eqref{eq:gradH} and \eqref{eq:HessH}, the Newton equation \eqref{eq:NewtonEq} takes the form
\begin{equation}\label{eq:NewtonEqL2}
\calP_{\varphibf_k,L}\big(\calA_{\varphibf_k}\zbf_k + \calB_{\varphibf_k}(\zbf_k,\varphibf_k) -\zbf_k \out{\varphibf_k}{\calA_{\varphibf_k}\varphibf_k}N^{-1}\big) = -\calR_{\varphibf_k}\varphibf_k.
\end{equation}
As in Section~\ref{ssec:pl2RGD}, it needs to hold $\calA_{\varphibf_k} \varphibf_k \in \Hsp$. Similarly to~\cite{AltPS24}, we can show that the resulting RN method is equivalent to the Lagrange--Newton method \cite{AltM95} and the Newton--Noda iteration \cite{DuL22} with modified update $\varphibf_{k+1}=\calN\big(\varphibf_k+\zbf_k\big)$ and $\Lambda_{k+1}=\out{\varphibf_{k+1}}{\calA_{\varphibf_{k+1}}\varphibf_{k+1}}N^{-1}$. 

It is well-known that Newton methods have a local nature. Hence, the convergence to a~minimal solution can only be expected in a~small neighbourhood of this solution, where the Hessian is positive definite. To circumvent this difficulty, inspired by the second-order derivative of the regularised Lagrangian $\calL_{\omega}$ given in \eqref{eq:L2HessLreg}, the Riemannian Hessian can be parametrised by $\omega\geq 0$ yielding the operator
\begin{equation*}
    \calH_{\varphibf,\omega} \zbf
    = \calP_{\varphibf,L}\big(\calA_{\varphibf}\zbf + \calB_{\varphibf}(\zbf,\varphibf) - \omega \zbf \out{\varphibf}{\calA_{\varphibf}\varphibf}N^{-1}\big), \qquad \zbf\in T_{\varphibf_k}\OB.
\end{equation*}
For $\omega = 1$, we recover the Riemannian Hessian $\Hess_{\Lsp}\calE(\varphibf)$. If $K$ satisfies Assumption~\textbf{A2} and, additionally, has non-negative entries, then for $\omega = 0$, the operator $\calH_{\varphibf,\omega}$ is positive definite on $T_{\varphibf_k}\OB$. Therefore, we can view $\omega$ as a~regularisation parameter. Solving the equation
\[
\calH_{\varphibf_k,\omega_k}\zbf_k=-\calR_{\varphibf_k}\varphibf_k
\]
instead of the Newton equation \eqref{eq:NewtonEqL2} leads to the  \textit{regularised Riemannian Newton {\em (regRN)} method}. In practice, we found that even for $\omega$ very close to $1$, the convergence radius of the regRN method is much larger than that of the RN iteration. Comparing the regRN method with the LgrRGD iteration~\eqref{eq:LgrRGD}, in the case of weak inter-component interactions, the operator~$\calG_{\varphibf,\omega}$ can be considered as an~approximation to $\calH_{\varphibf,\omega}$, and, hence, \eqref{eq:LgrRGD} can be interpreted as a Riemannian quasi-Newton method. It should also be noted that our regRN method differs from the adaptive regularised Newton method presented in \cite{TiaCWW20,WuWB17}, which approximates the Hessian by a quadratic functional. 

\medskip
In all the above methods, scalar parameters such as the shift $\widehat{c}_\varphibf$, the step size~$\tau_k$, or the regularisation parameter~$\omega_k$ can be chosen for each component individually, resulting in vector-valued parameters. Moreover, other Riemannian optimisation methods such as the Riemannian conjugate gradient, trust region, or quasi-Newton methods, see, e.g. \cite[Chap.~7 \&~8]{AbsiMS08}, can be extended to multicomponent infinite-dimensional BEC models in a~similar way by introducing a~vector transport between different tangent spaces.
%
%
\section{Convergence analysis of energy-adaptive Riemannian gradient descent}
\label{sec:convergence}
This section aims to demonstrate the reliability that arises from the problem-adaptive choice of the metric in RGD schemes. For the eaRGD method, we adapt the global qualitative and local quantitative convergence results for the single-component case~\cite{Hen23,HenP20,HenY25} to the multicomponent model under Assumptions~\textbf{A1} and~\textbf{A2}. This reliability makes this method a~prime choice for the globalisation of the other schemes, in particular, the potentially locally faster RN~methods.
%
%
\subsection{Global convergence} 
\label{ssec:GlobConv}
To verify the convergence, we start by noting that for any fixed $\varphibf \in \Hsp$, the coercive bilinear form~$\ahat_\varphibf$ introduced in \eqref{eq:metr_a}
induces the norm $\|\vbf\|_{\ahat_\varphibf} = \sqrt{\ahat_\varphibf(\vbf, \vbf)}$ on $\Hsp$. Due to \eqref{eq:Garding}, it is equivalent to the $\Hsp$-norm in the sense that there exists a~constant $C_\Hsp > 0$ (which may depend on $\varphibf$) such that for all $\vbf \in \Hsp$, 
\[
    C_\Hsp \|\vbf\|_{\ahat_\varphibf} \leq \|\vbf\|_{\Hsp} 
    \leq 2\, \|\vbf\|_{\ahat_\varphibf},
\]
provided that we choose $\widehat{c}_\varphibf \geq c_\varphibf$. In addition, for the point $\varphibf \in H$, we have $\|\varphibf\|_\Hsp \leq \|\varphibf\|_{\ahat_\varphibf}$ due to the positive definiteness of the interaction matrix $K$.

We now collect some technical results that are needed to prove the global convergence of the eaRGD method \eqref{eq:eaRGD}. 
\begin{lemma}\label{lm:estEnergy}
    Let Assumption~\textup{\textbf{A1}} be fulfilled. For all $\varphibf, \vbf\in\Hsp$ and $\Sigma = \diag(\sigma_1, \dots, \sigma_p)$ with $|\sigma_j| \leq 1$ for $j = 1, \dots, p$,
it holds that
\begin{align*}
            \calE(\varphibf) - \calE(\varphibf-\vbf) 
            & \geq a_\varphibf(\varphibf,\vbf) - \frac{1}{2}\, a_\varphibf(\vbf, \vbf) - C_K\|\vbf\|_\Hsp^2\Big(\|\varphibf\|_\Hsp^{}+ \tfrac{1}{2}\, \|\vbf\|_\Hsp\Big)^2,\\
            \calE(\varphibf) \,- \,\calE(\varphibf\,\Sigma) 
            & \,\geq - \frac{1}{2}\, C_K\|I_p - \Sigma^2\|_2\|\varphibf\|_\Hsp^4
            \end{align*}
with $C_{K}=C_4^4\,\|K\|_2$ and $C_4$ as in \eqref{eq:qSobolev} with $q=4$.
\end{lemma}
\begin{proof}
For $\varphibf = (\varphi_1, \dots, \varphi_p)$ and $\vbf\in\Hsp$, we have 
    \begin{align*}
      \calE(\varphibf) - \calE(\varphibf-\vbf)
    & = a_\varphibf(\varphibf,\vbf)
    - \frac{1}{2}\, a_\varphibf(\vbf, \vbf) \\
    &\qquad - \int_\Omega (\varphibf\circ\vbf)K(\varphibf\circ\vbf)^T
        - (\varphibf\circ\vbf)K(\vbf\circ\vbf)^T
        + \frac{1}{4}\, (\vbf\circ\vbf)K(\vbf\circ\vbf)^T\dx
    \end{align*}
as well as  
    \begin{align*}
    \calE(\varphibf) - \calE(\varphibf\,\Sigma) 
    & = \sum_{j=1}^p \int_\Omega \frac{1}{2} \big(1 - \sigma_j^2\big)\big(\|\nabla\varphi_j\|^2 + V_j(x)|\varphi_j|^2\big) \dx\\
    & \qquad  + \frac{1}{4} \int_\Omega (\varphibf \circ \varphibf)(K-\Sigma^2K\Sigma^2)(\varphibf \circ \varphibf)^T \dx. 
    \end{align*}
Using the Cauchy--Schwarz inequality and the Sobolev embedding \eqref{eq:qSobolev} with $q=4$ as well as the facts that $1 - \sigma_j^2 \geq 0$ for $j = 1, \dots, p$ and $\|K-\Sigma^2 K\Sigma^2\|_2\leq 2\,\|K\|_2\|I_p-\Sigma^2\|_2$, we obtain the claimed estimates.~
\end{proof}
It is easy to see that if $K$ has non-negative entries, we have $\calE(\varphibf) - \calE(\varphibf\,\Sigma) \geq 0$ in the above lemma.
\begin{lemma}\label{lm:positivity}
Let Assumptions~\textup{\textbf{A1}} and \textup{\textbf{A2}} be fulfilled. 
For $\varphibf\in\Hsp$, let~$\calAhat_\varphibf$ be the operator defined in~\eqref{eq:calAhat} with the corresponding coercive bilinear form $\ahat_\varphibf$. Then for any $\vbf\in\Hsp$ with $\vbf\geq 0$, the $p$-frame $\calAhat_\varphibf^{-1}\vbf$ only has non-negative components, i.e., $\calAhat_\varphibf^{-1}\vbf\geq 0$.
\end{lemma}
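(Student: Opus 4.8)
The plan is to exploit the component-wise action of $\calA_\varphibf$ and reduce the claim to a scalar weak maximum principle for each Schr\"odinger-type operator $\calA_{\varphibf,j}$. By \eqref{eq:calAcomp}, the operator $\calA_\varphibf$ acts diagonally on $p$-frames, so its inverse does as well, and $\calA_\varphibf^{-1}\vbf = (\calA_{\varphibf,1}^{-1}v_1,\ldots,\calA_{\varphibf,p}^{-1}v_p)$. Since $\vbf\geq 0$ means $v_j\geq 0$ component-wise, it suffices to show that $u:=\calA_{\varphibf,j}^{-1}v_j\geq 0$ for each fixed $j$, where $\calA_{\varphibf,j}$ is the weak form of $-\Delta+(V_j+\rho_j(\varphibf))$ given in \eqref{eq:calAj}. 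I would first record that coercivity of $a_\varphibf$ transfers to each component form: inserting a frame supported in a single component into $a_\varphibf(\vbf,\vbf)=\sum_j \langle\calA_{\varphibf,j}v_j,v_j\rangle$ shows that $\langle\calA_{\varphibf,j}\,\cdot\,,\cdot\,\rangle$ is coercive on $H_0^1(\Omega)$ with the same constant.

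The core of the argument is the classical negative-part test function technique. The element $u=\calA_{\varphibf,j}^{-1}v_j\in H_0^1(\Omega)$ solves $\langle\calA_{\varphibf,j}u,w\rangle = (v_j,w)_{L^2(\Omega)}$ for all $w\in H_0^1(\Omega)$. Writing $u=u^+-u^-$ with $u^-=\max\{-u,0\}\geq 0$, Stampacchia's theorem guarantees $u^-\in H_0^1(\Omega)$, with $\nabla u^-$ equal to $-\nabla u$ on the set $\{u<0\}$ and vanishing elsewhere. Testing with $w=u^-$ and using this sign structure, both the gradient term and the potential term flip sign, yielding $\langle\calA_{\varphibf,j}u,u^-\rangle = -\langle\calA_{\varphibf,j}u^-,u^-\rangle$. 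On the other hand, the right-hand side is $(v_j,u^-)_{L^2(\Omega)}\geq 0$ because $v_j\geq 0$ and $u^-\geq 0$. Hence $\langle\calA_{\varphibf,j}u^-,u^-\rangle\leq 0$.

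The conclusion then follows immediately from coercivity: $\langle\calA_{\varphibf,j}u^-,u^-\rangle \geq c\,\|u^-\|_{H^1(\Omega)}^2\geq 0$ forces $u^-=0$, i.e.\ $u\geq 0$ almost everywhere, and collecting the components gives $\calA_\varphibf^{-1}\vbf\geq 0$.

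The argument is essentially routine, so the only points requiring care are the well-definedness of the integrals and the Stampacchia facts, rather than any deep estimate. The subtlety is that the potential $\rho_j(\varphibf)=\sum_i\kappa_{ij}|\varphi_i|^2$ need not lie in $L^\infty(\Omega)$, since for $\varphibf\in\Hsp$ one only has $\varphi_i\in L^6(\Omega)$ for $d\leq 3$; nevertheless $\rho_j(\varphibf)(u^-)^2\in L^1(\Omega)$ by H\"older's inequality and the Sobolev embeddings \eqref{eq:Sobolev4}--\eqref{eq:Sobolev6}, so the bilinear form stays well-defined and bounded, consistent with Proposition~\ref{prop:aphiGarding}. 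This is the step I would double-check, but it does not obstruct the maximum principle itself.
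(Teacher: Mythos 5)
Your proof is correct, but it takes a different route from the paper. The paper does not test with the negative part at all: it characterises $\calA_\varphibf^{-1}\vbf$ as the unique minimiser of the convex functional $\calF(\ubf)=\tfrac12 a_\varphibf(\ubf,\ubf)-(\vbf,\ubf)_\Lsp$ on $\Hsp$, notes that passing from $\ubf$ to $|\ubf|$ leaves the quadratic part unchanged (the form only sees $|\nabla u|^2$ and $u^2$) while the linear term satisfies $(\vbf,\ubf)_\Lsp\leq(\vbf,|\ubf|)_\Lsp$ for $\vbf\geq 0$, so $\calF(|\calA_\varphibf^{-1}\vbf|)\leq\calF(\calA_\varphibf^{-1}\vbf)$, and concludes from uniqueness of the minimiser that $\calA_\varphibf^{-1}\vbf=|\calA_\varphibf^{-1}\vbf|\geq 0$. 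Your Stampacchia-style argument with $w=u^-$ is the classical weak maximum principle and is equally valid: the identity $\langle\calA_{\varphibf,j}u,u^-\rangle=-\langle\calA_{\varphibf,j}u^-,u^-\rangle$ holds because $\nabla u=-\nabla u^-$ and $u=-u^-$ on $\{u<0\}$ while $u^-$ vanishes elsewhere, and coercivity of the component forms (obtained, as you say, by inserting single-component frames into $a_\varphibf$) kills $u^-$. What the paper's version buys is brevity — no component-wise reduction, no truncation lemma, and the integrability of $\rho_j(\varphibf)(u^-)^2$ never needs to be discussed separately because it is already subsumed in the boundedness of $a_\varphibf$ from Proposition~\ref{prop:aphiGarding}. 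What your version buys is that it is the standard textbook argument and makes the one genuinely delicate point — that $V_j+\rho_j(\varphibf)$ is only an $L^3$-type potential rather than $L^\infty$ — explicit and checked, which is a point the paper glosses over.
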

\begin{proof}
    First, we observe that $\calAhat_\varphibf^{-1}\vbf$ is the unique minimiser of the convex cost functional 
    \[
    \calF(\ubf)=\frac{1}{2}\, \ahat_{\varphibf}(\ubf,\ubf)-(\vbf,\ubf)_{\Lsp},\qquad \ubf\in\Hsp.
    \]
    Further, due to $\vbf\geq 0$ and $(\vbf,\ubf)_{\Lsp}\leq (\vbf,|\ubf|)_{\Lsp}$ for all $\ubf\in\Hsp$, we obtain that $\calF(|\calAhat_\varphibf^{-1}\vbf|)\leq \calF(\calAhat_\varphibf^{-1}\vbf)$. This immediately implies that
    $\calAhat_\varphibf^{-1}\vbf=|\calAhat_\varphibf^{-1}\vbf|\geq 0$.
\end{proof}
The following theorem shows that the iterates of the eaRGD method \eqref{eq:eaRGD} are uniformly bounded and the energy functional $\,\calE$ decays for sufficiently small step sizes.
Thereby, we will consider two options for the shift constants $\widehat{c}_{\varphibf_k}$, namely 
\begin{align}
     \widehat{c}_{\varphibf_k} 
     &= c_{\varphibf_k} = C_c^{}\, \|\varphibf_k\|_{[L^4(\Omega)]^p}^8, \qquad k\geq 0,\tag{\bf{S1}} \label{ass:S1} \\
	\widehat{c}_{\varphibf_k} 
    &= C_c^{}C_4^8\big(1 + \tfrac{1}{2}C_K\, \|\varphibf_0\|_\Hsp^2\big)^4\|\varphibf_0\|_\Hsp^8, \qquad k\geq 0.\tag{\bf{S2}} \label{ass:S2}
\end{align}
Here, $C_4$ and $C_6$ are the constants from the Sobolev embedding inequality \eqref{eq:qSobolev} with $q = 4$ and $q = 6$, respectively, $C_{K}=C_4^4\,\|K\|_2$, and $C_c^{} = \tfrac{1}{4}\,C_6^6\,\|K\|_2^4$.
While~\eqref{ass:S1} guarantees coercivity trivially, we will show that the constant shift~\eqref{ass:S2} relying only on the initial guess also suffices.
\begin{theorem}[Energy decay]\label{thm:EnergyDecay}
    Let Assumptions~{\bf A1} and {\bf A2} be fulfilled, and let the shift $\widehat{c}_{\varphibf_k}$ be defined either via~\eqref{ass:S1} or~\eqref{ass:S2}. Further, let     $\{\varphibf_k\}_{k=0}^\infty\subset \OB$ be a~sequence generated by the eaRGD method~\eqref{eq:eaRGD}. Then there exist constants $C_0>0$ and $0<C_\tau\leq 1$ depending on $\Omega$, $d$, $K$, $N$, and $\|\varphibf_0\|_{\ahat_{\varphibf_0}}$ such that for any step size $0< \tau_{\min}\leq \tau_k\leq\tau_{\max}\leq C_\tau$ and any~$k\geq 0$, the following relations hold:
    \vspace{0.4em}
    \begin{itemize}[itemsep=0.2em]
	   \item[\rm (i)] $\|\varphibf_k\|_{\ahat_{\varphibf_k}}\leq C_0$, 
	   \item[\rm (ii)] $\calE(\varphibf_k)-\calE(\varphibf_{k+1})\geq \frac{1}{2}\, \tau_{\min}\, \|\grad_a\calE(\varphibf_k)\|_{\ahat_{\varphibf_k}}^2$.
    \end{itemize}
    Additionally, \eqref{ass:S2} is a valid shift in the sense that the bilinear form $\ahat_{\varphibf_k}$ is coercive for all $k$. 
\end{theorem}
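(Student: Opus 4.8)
The plan is to prove (i) and (ii) \emph{simultaneously} by induction on $k$, breaking the apparent circularity between them by pinning down the constant $C_0$ from the initial data \emph{before} the induction starts. The cornerstone is the pointwise identity
\[
\|\varphibf\|_{a_\varphibf}^2 = a_\varphibf(\varphibf,\varphibf) = 4\,\calE(\varphibf) - \|\varphibf\|_\Hsp^2 \leq 4\,\calE(\varphibf),
\qquad \varphibf\in\Hsp,
\]
obtained by inserting the relation $\int_\Omega(\varphibf\circ\varphibf)K(\varphibf\circ\varphibf)^T\dx = 4\calE(\varphibf)-2\|\varphibf\|_\Hsp^2$, read off from \eqref{eq:energy_short}, into the expression \eqref{eq:aphi} for $a_\varphibf(\varphibf,\varphibf)$. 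Thus, as soon as a \emph{monotone} energy decay $\calE(\varphibf_{k+1})\le\calE(\varphibf_k)$ is in hand, the uniform bound (i) is immediate with $C_0:=2\sqrt{\calE(\varphibf_0)}$; and since $\|\varphibf_0\|_\Hsp\le\|\varphibf_0\|_{a_{\varphibf_0}}$ gives $\calE(\varphibf_0)\le\tfrac12\|\varphibf_0\|_{a_{\varphibf_0}}^2$, the constant $C_0$ depends only on $\|\varphibf_0\|_{a_{\varphibf_0}}$, as asserted.

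With $C_0$ fixed, I would define all remaining constants --- in particular the threshold $C_\tau$ --- in terms of $C_0$, $\|K\|_2$, the Sobolev constants $C_2,C_4$ of \eqref{eq:Sobolev2}--\eqref{eq:Sobolev4} (which encode the dependence on $\Omega$ and $d$), and $\|N^{-1}\|_2$. The induction hypothesis at step $k$ is exactly (i), i.e.\ $\|\varphibf_k\|_{a_{\varphibf_k}}\le C_0$. By the gradient bound in Lemma~\ref{lm:estGradRetr} this gives $\|\grad_a\calE(\varphibf_k)\|_{a_{\varphibf_k}}\le\|\varphibf_k\|_{a_{\varphibf_k}}\le C_0$, and since coercivity with constant $1$ makes all the metrics $a_{\varphibf_k}$ uniformly comparable to $\|\cdot\|_\Hsp$ (with constants governed by $\|\varphibf_k\|_\Hsp\le C_0$), it also controls every $\Hsp$- and $L^2$-norm of the search direction $\zbf_k=-\grad_a\calE(\varphibf_k)$.

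To establish (ii) I would split, writing $\psibf_k:=\varphibf_k+\tau_k\zbf_k$ and $\varphibf_{k+1}=\calN(\psibf_k)$,
\[
\calE(\varphibf_{k+1})-\calE(\varphibf_k)
= \big[\calE(\psibf_k)-\calE(\varphibf_k)\big]
+ \big[\calE(\varphibf_{k+1})-\calE(\psibf_k)\big].
\]
For the first bracket I use the \emph{exact} expansion underlying Lemma~\ref{lm:estEnergy} with base point $\varphibf_k$ and increment $\tau_k\zbf_k$: its linear term equals $\tau_k\,\Drm\calE(\varphibf_k)[\zbf_k]=-\tau_k\|\grad_a\calE(\varphibf_k)\|_{a_{\varphibf_k}}^2$ by the defining property of the Riemannian gradient in the metric $g_{\varphibf_k,a}=a_{\varphibf_k}$ from \eqref{eq:metr_a}, its quadratic term equals $\tfrac{\tau_k^2}{2}\|\grad_a\calE(\varphibf_k)\|_{a_{\varphibf_k}}^2$, and the remaining quartic terms are bounded, using the Sobolev embedding and $\|\zbf_k\|_\Hsp\le\|\grad_a\calE(\varphibf_k)\|_{a_{\varphibf_k}}\le C_0$, by $\tfrac94 C_K C_0^2\,\tau_k^2\|\grad_a\calE(\varphibf_k)\|_{a_{\varphibf_k}}^2$. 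For the second (retraction) bracket, Lemma~\ref{lm:estGradRetr} yields $\|\varphibf_{k+1}-\psibf_k\|_{a_{\varphibf_k}}=O(\tau_k^2)\,\|\grad_a\calE(\varphibf_k)\|_{a_{\varphibf_k}}^2$, and feeding this into Lemma~\ref{lm:estEnergy} with base point $\psibf_k$ (whose norms are controlled since $\|\psibf_k\|_\Hsp\le 2C_0$) bounds the energy difference by $C_R\,\tau_k^2\,\|\grad_a\calE(\varphibf_k)\|_{a_{\varphibf_k}}^2$. Collecting the two brackets gives
\[
\calE(\varphibf_{k+1})-\calE(\varphibf_k)
\le -\tau_k\big(1-\tau_k C_*\big)\,\|\grad_a\calE(\varphibf_k)\|_{a_{\varphibf_k}}^2
\]
with $C_*=C_*(C_0,\|K\|_2,C_2,C_4,\|N^{-1}\|_2)$; taking $C_\tau:=\min\{1,\tfrac{1}{2C_*}\}$ forces $1-\tau_kC_*\ge\tfrac12$ and hence (ii). Finally, (ii) implies $\calE(\varphibf_{k+1})\le\calE(\varphibf_k)\le\calE(\varphibf_0)$, so the cornerstone identity closes the induction by delivering (i) at step $k+1$.

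The main obstacle is precisely the interlocking of the two claims together with the base-point dependence of the metric. On the one hand, the step-size threshold is controlled by $C_*$, which in turn is governed by the a priori iterate bound $C_0$; on the other hand, the retraction error is naturally measured in the base metric $a_{\varphibf_k}$, whereas the second-bracket expansion lives at the off-manifold point $\psibf_k$. Both difficulties dissolve once the bound $\|\varphibf_k\|_\Hsp\le C_0$ is extracted first --- this makes all the metrics uniformly equivalent to $\|\cdot\|_\Hsp$ with constants depending only on $C_0$ --- and the key point enabling the whole scheme to be non-circular is that $C_0$ is fixed by $\calE(\varphibf_0)$ alone, through $\|\varphibf\|_{a_\varphibf}^2\le 4\calE(\varphibf)$.
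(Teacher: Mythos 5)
Your proposal is correct and follows essentially the same route as the paper's proof: the same simultaneous induction, the same splitting of the energy difference at the intermediate point $\tilde{\varphibf}_k=\varphibf_k-\tau_k\grad_a\calE(\varphibf_k)$, the same use of Lemmas~\ref{lm:estGradRetr} and~\ref{lm:estEnergy} with the relation $a_{\varphibf_k}(\varphibf_k,\gbf_k)=\|\gbf_k\|_{a_{\varphibf_k}}^2$, and the induction is closed exactly as in the paper via $\|\varphibf_{k+1}\|_{a_{\varphibf_{k+1}}}^2\le 4\,\calE(\varphibf_{k+1})\le 4\,\calE(\varphibf_0)$. Your choice $C_0=2\sqrt{\calE(\varphibf_0)}$ is a slightly cleaner packaging of the paper's constant $C_0=(2+C_K\|\varphibf_0\|_\Hsp^2)^{1/2}\|\varphibf_0\|_{a_{\varphibf_0}}$, but the argument is the same.
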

\begin{proof} We prove the estimates by mathematical induction adapting the proof of \cite[Th.~3.1]{ChenLLZ24} to the multicomponent setting and the energy-adaptive norm. 
We start with the case~\eqref{ass:S2}. Set
\[
    C_0=\big(2+C_K\|\varphibf_0\|_\Hsp^2\big)^{1/2}\|\varphibf_0\|_{\ahat_{\varphibf_0}}.
\]
For $k=0$, we obviously have $\|\varphibf_0\|_{\ahat_{\varphibf_0}}\leq C_0$ and \mbox{$\widehat{c}_{\varphibf_0} \geq c_{\varphibf_0}$}. Hence, the bilinear form $\ahat_{\varphibf_0}$ is coercive. 
Now assume that bound~(i) holds for $0,1,\ldots,k$. Define \mbox{$\gbf_k=(\gbf_{k,1}, \dots, \gbf_{k,p})=\grad_a\calE(\varphibf_k)$} and \mbox{$\widetilde{\varphibf}_k=\varphibf_k-\tau_k\gbf_k$}.
As the Riemannian gradient $\gbf_k$ is the~\mbox{$\ahat_{\varphibf_k}$-orthogonal} projection of $\varphibf_k$, we can estimate $\|\gbf_k\|_{\Hsp}\leq 2\,\|\gbf_k\|_{\ahat_{\varphibf_k}} \leq 2\,\|\varphibf_k\|_{\ahat_{\varphibf_k}} \leq 2\,C_0$. 
Due to the triangle inequality,  this implies that
$\|\widetilde{\varphibf}_k\|_{\Hsp}\leq \|\varphibf_k\|_{\ahat_{\varphibf_k}}+2\,\|\gbf_k\|_{\ahat_{\varphibf_k}} \leq 3\,C_0$.
Furthermore, we consider the diagonal matrix \mbox{$\Sigma_{\widetilde{\varphibf}_k} = \diag(\sigma_{k,1}, \dots, \sigma_{k,p})$} with 
\[
0<\sigma_{k,j} = \big(1 + \tau_k^2\,\|\gbf_{k,j}\|_{L^2(\Omega)}^2N_j^{-1}\big)^{-1/2} \leq 1, \qquad j = 1, \dots, p,
\] 
such that we have $\varphibf_{k+1}= \calN(\widetilde{\varphibf}_k) =\widetilde{\varphibf}_k\Sigma_{\widetilde{\varphibf}_k}$ and
$\|I_p - \Sigma_{\widetilde{\varphibf}_k}^2\|_2^{} \leq \tau_k^2\, C_N\, \|\gbf_k\|_{\ahat_{\varphibf_k}}^2$
with \mbox{$C_N = 4\,C_2^2\,\|N^{-1}\|_2^{}$} and $C_2$ as in \eqref{eq:qSobolev} with $q=2$. Then Lemma~\ref{lm:estEnergy} implies that
\begin{align*}
\calE(\varphibf_k) - \calE(\widetilde{\varphibf}_k)
& \geq \tau_k\, a_{\varphibf_k}(\varphibf_k,\gbf_k) 
    - \tau_k^2\,\big(\tfrac{1}{2}a_{\varphibf_k}(\gbf_k,\gbf_k) 
    + 16\, C_K C_0^2\, \|\gbf_k\|_{\ahat_{\varphibf_k}}^2\big), \\
\calE(\widetilde{\varphibf}_k) - \calE(\widetilde{\varphibf}_k\Sigma_{\widetilde{\varphibf}_k})
& \geq -\tfrac{81}{2}\tau_k^2\, C_K C_N C_0^4\, \|\gbf_k\|_{\ahat_{\varphibf_k}}^2.
\end{align*}
    
Using these estimates and the relations 
\begin{align*}
    a_{\varphibf_k}(\varphibf_k,\gbf_k) 
        = \ahat_{\varphibf_k}(\varphibf_k,\gbf_k)
        = \|\gbf_k\|_{\ahat_{\varphibf_k}}^2 
        = \ahat_{\varphibf_k}(\gbf_k,\gbf_k)
        \geq a_{\varphibf_k}(\gbf_k,\gbf_k),    
\end{align*}
we obtain that
\begin{align*}
    \calE(\varphibf_k)-\calE(\varphibf_{k+1})
    & \geq \calE(\varphibf_k) - \calE(\widetilde{\varphibf}_k)
    + \calE(\widetilde{\varphibf}_k) 
    - \calE(\widetilde{\varphibf}_k \Sigma_{\widetilde{\varphibf}_k}) \\
    & \geq \tau_k\, \|\gbf_k\|_{\ahat_{\varphibf_k}}^2 - \tau_k^2\, \big(\tfrac{1}{2}+16\,C_K C_0^2 + 
\tfrac{81}{2}C_K C_N C_0^4
    \big)\, \|\gbf_k\|_{\ahat_{\varphibf_k}}^2
    \geq \frac{\tau_{\min}}{2}\, \|\gbf_k\|_{\ahat_{\varphibf_k}}^2,
\end{align*}
provided that~$\tau_{\max}\big(\frac{1}{2}+16\,C_K C_0^2 + \tfrac{81}{2}C_NC_K C_0^4
\big)\leq \frac{1}{2}$, which is guaranteed by $\tau_{\max}\leq C_\tau$ for some sufficiently small~$C_\tau$. This proves (ii) and, in particular, $\calE(\varphibf_{k+1}) \leq \calE(\varphibf_{k})$. 
In order to show (i) for $k+1$, we estimate
\begin{align*}
\widehat{a}_{\varphibf_{k+1}}(\varphibf_{k+1},\varphibf_{k+1})
& 
= 2\,\calE(\varphibf_{k+1})+\frac{1}{2} \int_\Omega (\varphibf_{k+1}\circ\varphibf_{k+1})K(\varphibf_{k+1}\circ\varphibf_{k+1})^T\dx + \widehat{c}_{\varphibf_{k+1}}\|\varphibf_{k+1}\|_\Lsp^2 \\
&\leq 4\,\calE(\varphibf_{k+1}) + \widehat{c}_{\varphibf_{k+1}}\trace{N} \\
&\leq 4\,\calE(\varphibf_0) + \widehat{c}_{\varphibf_0}\trace{N} \\
&\leq 2\,\|\varphibf_0\|_\Hsp^2 + \|K\|_2\|\varphibf_0\|_{[L^4(\Omega)]^p}^4 + \widehat{c}_{\varphibf_0}\|\varphibf_0\|_{\Lsp}^2 \\
&\leq \big(2+C_K\|\varphibf_0\|_\Hsp^2\big)\|\varphibf_0\|_{\ahat_{\varphibf_0}}^2.
\end{align*}

A very similar argument implies $\|\varphibf_{k+1}\|^2_\Hsp \leq \big(1 + \tfrac{1}{2}C_K\|\varphibf_0\|_\Hsp^2\big)\|\varphibf_0\|_\Hsp^2$, which allows us to estimate 
\begin{equation*}
    c_{\varphibf_{k+1}} = C_c^{} \|\varphibf_{k+1}\|_{[L^4(\Omega)]^p}^8 \leq C_c^{} C_4^8 \|\varphibf_{k+1}\|_H^8 \leq  C_c^{}C_4^8\big(1 + \tfrac{1}{2}C_K\|\varphibf_0\|_\Hsp^2\big)^4\|\varphibf_0\|_\Hsp^8 = \widehat{c}_{\varphibf_{k+1}}.
\end{equation*}
Therefore, the bilinear form $\ahat_{\varphibf_{k+1}}$ is coercive and~\eqref{ass:S2} defines a valid shift. Moreover, we have $\|\varphibf_{k+1}\|_{\ahat_{\varphibf_{k+1}}}\leq C_0$.

For the shift~\eqref{ass:S1}, the claim follows analogously with the choice
\[
    C_0=\Big(1 + C_c^{}\trace{N}\,\|K^{-1}\|_2^2\big(2 + C_K\|\varphibf_0\|_\Hsp^2\big)\|\varphibf_0\|_\Hsp^2\Big)^{1/2}\Big(2 + C_K\|\varphibf_0\|_\Hsp^2\Big)^{1/2}\|\varphibf_0\|_{\ahat_{\varphibf_0}}
\]
and using that $\|\varphibf_{k+1}\|_{[L^4(\Omega)]^p}^4 \leq 4\,\|K^{-1}\|_2\,\calE(\varphibf_{k+1})$.
\end{proof}
We are now ready to prove the global convergence result for the eaRGD iteration. 
\begin{theorem}[Global convergence to the ground state]\label{thm:globalConv}
Let Assumptions~\textup{\textbf{A1}} and \textup{\textbf{A2}} be fulfilled and let the shift $\widehat{c}_{\varphibf_k}$ be defined via~\eqref{ass:S1} or~\eqref{ass:S2}. Further, let 
$\{\varphibf_k\}_{k=0}^\infty\subset \OB$ be a~sequence generated by the eaRGD method \eqref{eq:eaRGD} with step sizes \mbox{$0<\tau_{\min}\leq\tau_k\leq\tau_{\max}\leq C_\tau \leq 1$} as in Theorem~\textup{\ref{thm:EnergyDecay}}, and a~starting guess $\varphibf_0\in\OB$ with $\varphibf_0\geq 0$. Then this  sequence converges strongly in $\Hsp$ to the unique ground state~\mbox{$\varphibf_*\!\geq 0$}.
\end{theorem}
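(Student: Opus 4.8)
The strategy is a standard two-stage argument for gradient-descent convergence to a global minimiser. First, I would extract a convergent subsequence and show its limit is a constrained critical point; second, I would use the uniqueness and spectral-gap structure established earlier (Theorem~\ref{thm:uniquedensity}, Proposition~\ref{prop:LagrEigVal}, Remark~\ref{lm:GroundEigVal}) to identify that limit as the ground state $\varphibf_*$, and finally upgrade subsequential to full strong convergence.

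\textbf{Step 1: Boundedness and a vanishing-gradient subsequence.} By Theorem~\ref{thm:EnergyDecay}, the iterates satisfy $\|\varphibf_k\|_{a_{\varphibf_k}}\le C_0$ uniformly, so $\{\varphibf_k\}$ is bounded in $\Hsp$ (using the norm equivalence following Proposition~\ref{prop:aphiGarding}). Moreover, part~(ii) of that theorem gives the summable decay
\begin{equation*}
\sum_{k=0}^\infty \tfrac{1}{2}\,\tau_{\min}\,\|\grad_a\calE(\varphibf_k)\|_{a_{\varphibf_k}}^2 \le \calE(\varphibf_0)-\inf_k\calE(\varphibf_k) < \infty,
\end{equation*}
since $\calE$ is bounded below by zero. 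Hence $\|\grad_a\calE(\varphibf_k)\|_{a_{\varphibf_k}}\to 0$. By boundedness, there is a subsequence $\varphibf_{k_\ell}\rightharpoonup \varphibf_\star$ weakly in $\Hsp$, and the compact embedding $H^1_0(\Omega)\hookrightarrow L^2(\Omega)$ gives strong $\Lsp$-convergence; as in the proof of Theorem~\ref{thm:existence}, this yields $\out{\varphibf_\star}{\varphibf_\star}=N$, so $\varphibf_\star\in\OB$. Each iterate is non-negative (starting from $\varphibf_0\ge 0$ and using Lemma~\ref{lm:positivity} to preserve non-negativity through the $\calA_{\varphibf_k}^{-1}$ step of~\eqref{eq:eaRGD}), so $\varphibf_\star\ge 0$.

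\textbf{Step 2: The limit is a critical point, hence the ground state.} Passing to the limit in $\grad_a\calE(\varphibf_{k_\ell})\to 0$ — using the representation~\eqref{eq:grad_a_R} and continuity of the operators $\calA_\varphibf$, $\calA_\varphibf^{-1}$, and $\calR_\varphibf$ in $\varphibf$ — shows $\grad_a\calE(\varphibf_\star)=0$, i.e.\ $\calR_{\varphibf_\star}\varphibf_\star=0$, so $\varphibf_\star$ is a constrained critical point solving the NLEVP~\eqref{eq:nlevp}. Now I invoke the component-wise non-negativity: since $\varphibf_\star\ge 0$, Lemma~\ref{lem:spectralGap} forces each component $\varphi_{\star,j}$ to be (a scalar multiple of) the \emph{positive} ground eigenfunction of $\calA_{\varphibf_\star,j}$, so the associated eigenvalues are the smallest ones. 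By Remark~\ref{lm:GroundEigVal}, an eigenvector of~\eqref{eq:nlevp} whose eigenvalues are all smallest must equal $\varphibf_*\Sigma_{\pm1}$; combined with $\varphibf_\star\ge 0$ and the sign-normalisation of $\varphibf_*$, this gives $\varphibf_\star=\varphibf_*$, the unique non-negative ground state of Theorem~\ref{thm:uniquedensity}.

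\textbf{Step 3: Full strong convergence.} Every subsequence thus has a further subsequence converging (at least weakly, strongly in $\Lsp$) to the \emph{same} limit $\varphibf_*$, which upgrades subsequential convergence to convergence of the whole sequence in the weak-$\Hsp$/strong-$\Lsp$ sense. To obtain strong $\Hsp$-convergence I would show $\|\varphibf_k\|_{a_{\varphibf_*}}\to\|\varphibf_*\|_{a_{\varphibf_*}}$: energy convergence $\calE(\varphibf_k)\to\calE(\varphibf_*)$ (monotone and bounded below, with the limit pinned by lower semicontinuity) together with strong $L^4$-convergence of the nonlinear term (via strong $\Lsp$-convergence and boundedness, by interpolation~\eqref{eq:estL4}) controls the quadratic part, forcing norm convergence; norm convergence plus weak convergence in the Hilbert space $(\Hsp,a_{\varphibf_*})$ yields strong convergence. \textbf{The main obstacle} I anticipate is Step~2 — rigorously passing to the limit in the nonlinear, $\varphibf$-dependent gradient expression and, above all, ruling out convergence to a \emph{spurious} critical point (an excited eigenvector that is not the minimiser). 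The non-negativity of the iterates, preserved by Lemma~\ref{lm:positivity}, is exactly the device that closes this gap: it is what lets the spectral-gap dichotomy of Remark~\ref{lm:GroundEigVal} pick out the ground state rather than merely some critical point.
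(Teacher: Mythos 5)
Your overall skeleton matches the paper's: non-negativity of the iterates is propagated exactly as in the paper via Lemma~\ref{lm:positivity} applied to the convex-combination form of $\tilde{\varphibf}_k$, and the boundedness/energy-decay input from Theorem~\ref{thm:EnergyDecay} together with a compactness extraction of a subsequence is also how the paper (via the cited single-component results) produces a limit $\varphibf_*\in\OB$ with $\varphibf_*\geq 0$. However, your Step~2 has a genuine gap in the identification of the limit as the ground state. What you actually establish is that each component $\varphi_{\star,j}$ is the positive ground eigenfunction of $\calA_{\varphibf_\star,j}$, i.e.\ $\lambda_{\star,j}=\lambda_1(\calA_{\varphibf_\star,j})$ --- minimality with respect to $\varphibf_\star$'s \emph{own} linearised operators. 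The dichotomy of Remark~\ref{lm:GroundEigVal} compares instead against $\lambda_{*,j}=\lambda_1(\calA_{\varphibf_*,j})$, the eigenvalues of the \emph{ground state's} operators, whose potentials $V_j+\rho_j(\varphibf_*)$ differ from $V_j+\rho_j(\varphibf_\star)$. A hypothetical non-negative excited critical point could satisfy $\lambda_{\star,j}=\lambda_1(\calA_{\varphibf_\star,j})$ for every $j$ and still fall into the second branch $\lambda_{\star,j_0}>\lambda_{*,j_0}$, so the dichotomy as stated does not pick out $\varphibf_*$. Closing this requires an additional argument, e.g.\ the convexity device from the proof of Theorem~\ref{thm:uniquedensity} (a strictly positive critical point yields a critical density of the strictly convex functional $\calE(\sqrt{\bullet})$ on the convex set $\Upsilon$, hence the global minimiser) or a direct energy comparison.

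Your Step~3 also diverges from the paper, and less robustly. The paper proves the quantitative bound
\begin{equation*}
\calE(\varphibf)-\calE(\varphibf_*)\;\geq\;\frac{\lambda_{\min}(K)}{4}\sum_{j=1}^p\int_\Omega\big(|\varphi_j|^2-|\varphi_{*,j}|^2\big)^2\dx
\qquad\text{for all }\varphibf\in\OB,
\end{equation*}
obtained by expanding $\calE(\varphibf)-\calE(\varphibf_*)$ around $\varphibf_*$, using the NLEVP, the identity $(\varphibf\Lambda_*,\varphibf)_\Lsp=(\varphibf_*\Lambda_*,\varphibf_*)_\Lsp$ on $\OB$, the spectral minimality of Proposition~\ref{prop:LagrEigVal} to discard the quadratic remainder, and Assumption~\textbf{A2} for the quartic term. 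Applied to $\varphibf=\varphibf_k$ with $\calE(\varphibf_k)\to\calE(\varphibf_*)$ and the sign information $\varphibf_k,\varphibf_*\geq 0$, this yields strong convergence of the \emph{whole} sequence at once, with no subsequence-uniqueness detour. Your alternative is workable in principle, but note that ``the limit pinned by lower semicontinuity'' only gives $\calE(\varphibf_*)\leq\lim_k\calE(\varphibf_k)$; the reverse inequality needs strong $\Hsp$-convergence of the subsequence (which in the paper's cited argument comes from the vanishing gradient combined with the smoothing property of $\calA_{\varphibf_k}^{-1}$), whereas you attempt to derive strong convergence \emph{from} energy convergence --- a mild circularity you would need to untangle.
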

\begin{proof}
Starting with $\varphibf_0\geq 0$, we apply Lemma~\ref{lm:positivity} recursively. Given~$\varphibf_k\geq 0$, we get 
\begin{align*}
  \widetilde{\varphibf}_k 
  & = \varphibf_k-\tau_k\big(\varphibf_k-\calAhat_{\varphibf_k}^{-1}\varphibf_k N\out{\varphibf_k}{\calAhat_{\varphibf_k}^{-1}\varphibf_k}^{-1}\big)\\
  & = (1-\tau_k)\varphibf_k + \tau_k\calAhat_{\varphibf_k}^{-1}\varphibf_k N\out{\varphibf_k}{\calAhat_{\varphibf_k}^{-1}\varphibf_k}^{-1}\geq 0.
\end{align*}
This then implies $\varphibf_{k+1}=\widetilde{\varphibf}_k\out{\widetilde{\varphibf}_k}{\widetilde{\varphibf}_k}^{-1/2}N^{1/2}\geq 0$. Hence, the eaRGD iteration~\eqref{eq:eaRGD} preserves the non-negativity of the components.

By using Proposition~\ref{prop:LagrEigVal} and the uniform boundedness of $\{\varphibf_k\}$ shown in Theorem~\ref{thm:EnergyDecay}, similarly to \cite[Th.~4.9 \& Th.~5.1]{HenP20}, we can prove that there exists a~subsequence $\{\varphibf_{k_l}\}$ converging strongly in~$\Lsp$ and $\Hsp$ to a~ground state $\varphibf_*\in\OB$ with $\varphibf_*\geq 0$ and $\calE(\varphibf_*)=\lim\limits_{k\to\infty}\calE(\varphibf_k)$. Let $\Lambda_*$ denote the associated Lagrange multiplier. Then for any $\varphibf\in\OB$, we have 
\[
    a_{\varphibf_*}(\varphibf_*,\varphibf_*) 
    = (\varphibf_*\Lambda_*,\varphibf_*)_\Lsp
    = \sum_{j=1}^p N_j\lambda_{*,j}
    = \sum_{j=1}^p \|\varphi_{j}\|_{L^2(\Omega)}^2\lambda_{*,j}
    = (\varphibf\Lambda_*,\varphibf)_\Lsp.
\]
Hence, due to the definition of the smallest eigenvalues of the operators $\calA_{\varphibf_*,j}$ for $j=1,\ldots,p$, and the matrix $K$, it holds that
\begin{align*}
\calE(\varphibf) - \calE(\varphibf_*)  
&= \frac{1}{2}\, \big(a_{\varphibf_*}(\varphibf,\varphibf)-a_{\varphibf_*}(\varphibf_*,\varphibf_*)\big)\\
&\qquad + \frac{1}{4} \int_\Omega (\varphibf\circ\varphibf)K(\varphibf\circ\varphibf)^T
-2\,(\varphibf_*\circ\varphibf_*)K(\varphibf\circ\varphibf)^T
+(\varphibf_*\circ\varphibf_*)K(\varphibf_*\circ\varphibf_*)^T \dx\\
&= \frac{1}{2}\, a_{\varphibf_*}(\varphibf-\varphibf_*,\varphibf-\varphibf_*)
-\frac{1}{2}\, \big((\varphibf-\varphibf_*)\Lambda_*,\varphibf-\varphibf_*\big)_\Lsp \\
&\qquad+\frac{1}{4}\int_\Omega (\varphibf\circ\varphibf-\varphibf_*\circ\varphibf_*)K(\varphibf\circ\varphibf-\varphibf_*\circ\varphibf_*)^T\dx\\
&\geq \frac{1}{4\,\|K^{-1}\|_2}\, \int_\Omega (\varphibf\circ\varphibf-\varphibf_*\circ\varphibf_*)(\varphibf\circ\varphibf-\varphibf_*\circ\varphibf_*)^T\dx \\
&= \frac{1}{4\,\|K^{-1}\|_2}\, \sum_{j=1}^p\int_\Omega \big(|\varphi_j|^2-|\varphi_{*,j}|^2\big)^2\dx.
\end{align*}
The above relation with $\varphibf=\varphibf_k=(\varphi_{k,1},\ldots,\varphi_{k,p})$ implies that 
\[
\sum_{j=1}^p\int_\Omega \big(|\varphi_{k,j}|^2-|\varphi_{*,j}|^2\big)^2\dx \leq 4\,\|K^{-1}\|_2
\big(\calE(\varphibf_k) -\calE(\varphibf_*) \big) \underset{k \to \infty}{\longrightarrow} 0.
\]
Finally, taking into account that $\varphibf_k\geq 0$ and $\varphibf_*\geq 0$, we conclude that the whole sequence~$\{\varphibf_k\}$ converges strongly to $\varphibf_*$ in $\Lsp$ and in $\Hsp$. 
\end{proof}
%
%
\subsection{Local convergence}
\label{ssec:LocConv}
Our goal is to characterise the local convergence rate of the eaRGD iteration \eqref{eq:eaRGD} with constant step size $\tau_k=\tau$ and element-wise non-negative interaction matrix $K$. Due to the latter assumption, we do not need shifting. Therefore, the operator $\calAhat_{\varphibf}$ in \eqref{eq:eaRGD} can be replaced by $\calA_\varphibf$. Then the eaRGD iteration \eqref{eq:eaRGD} can equivalently be written as the fixed point iteration $\varphibf_{k+1}=\psi_\tau(\varphibf_k)$ with the mapping $\psi_\tau\colon H\to H$,
\[
\psi_\tau(\varphibf) 
    = \widetilde{\psi}_\tau(\varphibf)\out{\widetilde{\psi}_\tau(\varphibf)}{\widetilde{\psi}_\tau(\varphibf)}^{-1/2}N^{1/2},
\]
where $\widetilde{\psi}_\tau(\varphibf)
    = (1-\tau)\varphibf+\tau \calA_\varphibf^{-1}\varphibf N\out{\varphibf}{\calA_\varphibf^{-1}\varphibf}^{-1}$.
For a~ground state $\varphibf_*\in\OB$, we have $\widetilde{\psi}_\tau(\varphibf_*)=\varphibf_*=\psi_\tau(\varphibf_*)$ showing that $\varphibf_*$ is a~fixed point of $\psi_\tau$. Proving that the spectral radius of the Fr\'echet derivative $\Drm\psi_\tau(\varphibf_*)$, denoted by~$\varrho_*:=\varrho(\Drm {\psi}_\tau(\varphibf_*))$, is smaller than $1$, local convergence with rate~$\varrho_*$ follows from Ostrowski's theorem, see, e.g., \cite[Prop.~1]{AltHP21}.

To proceed, we introduce the mappings $\zeta(\varphibf)=\calA_\varphibf^{-1}\varphibf$ and $\Xi(\varphibf)=\out{\varphibf}{\zeta(\varphibf)}^{-1}$. Their directional derivatives at any $\varphibf\in H\setminus\{\zerobf\}$ along $\vbf\in H$ are given by
\begin{align*}
\Drm \zeta(\varphibf)[\vbf] 
& = \calA_\varphibf^{-1} \vbf- \calA_\varphibf^{-1}\calB_\varphibf(\vbf,\calA_\varphibf^{-1}\varphibf), \\
\Drm \Xi(\varphibf)[\vbf] & = -\big(\out{\vbf}{\calA_\varphibf^{-1}\varphibf}
+\out{\varphibf}{\calA_\varphibf^{-1}\vbf-\calA_\varphibf^{-1}\calB_\varphibf(\vbf,\calA_\varphibf^{-1}\varphibf)}\big)\out{\varphibf}{\calA_\varphibf^{-1}\varphibf}^{-2}.
\end{align*}
Since the linear operators $\Drm \zeta(\varphibf)$ and $\Drm \Xi(\varphibf)$ are both continuous in a~neighbourhood of $\varphibf_*$, which excludes the zero element, $\zeta(\varphibf)$ and $\Xi(\varphibf)$ are Fr\'echet differentiable in this neighbourhood. This implies that $\widetilde{\psi}_\tau(\varphibf)=(1-\tau)\varphibf+\tau\zeta(\varphibf)N\Xi(\varphibf)$ and $\psi_{\tau}(\varphibf)$ are also Fr\'echet differentiable in a~neighbourhood of $\varphibf_*$.
In particular, evaluating $\zeta(\varphibf)$, $\Xi(\varphibf)$ and their derivatives at the ground state $\varphibf_*$ and taking into account that
$\calA_{\varphibf_*}^{-1}\varphibf_*^{}=\varphibf_*^{}\Lambda_*^{-1}$ with $\Lambda_*=\out{\varphibf_*}{\calA_{\varphibf_*}\varphibf_*}N^{-1}$, we obtain that 
\begin{align*}
\zeta(\varphibf_*)& =\varphibf_*\Lambda_*^{-1}, &\; 
\Drm\zeta(\varphibf_*)[\vbf] &=\calA_{\varphibf_*}^{-1}\vbf-\calA_{\varphibf_*}^{-1}\calB_{\varphibf_*}(\vbf,\varphibf_*\Lambda_*^{-1}), \\
\Xi(\varphibf_*)& =\Lambda_*N^{-1}, \;
& \Drm\Xi(\varphibf_*)[\vbf] &= -
\big(\out{\vbf}{\varphibf_*}\Lambda_{*}^{-1}
+\out{\varphibf_*}{\calA_{\varphibf_*}^{-1}\vbf-\calA_{\varphibf_*}^{-1}\calB_{\varphibf_*}(\vbf,\varphibf_*\Lambda_*^{-1})}\big)\Lambda_{*}^2N^{-2}.
\end{align*} 
Then the derivatives of $\widetilde{\psi}_\tau$ and $\psi_{\tau}$ at $\varphibf_*$ are computed as 
\begin{align*}
\Drm \widetilde{\psi}_\tau(\varphibf_*)[\vbf] 
& = (1-\tau)\,\vbf+\tau\, \big(\Drm\zeta(\varphibf_*)[\vbf]N\Xi(\varphibf_*)+\zeta(\varphibf_*)N\Drm\Xi(\varphibf_*)[\vbf]\big), \\
\Drm \psi_\tau(\varphibf_*)[\vbf] 
& = \Drm \widetilde{\psi}_\tau(\varphibf_*)[\vbf] \out{\widetilde{\psi}_\tau(\varphibf_*)}{\widetilde{\psi}_\tau(\varphibf_*)}^{-1/2}N^{1/2} \\
& \qquad -\widetilde{\psi}_\tau(\varphibf_*)\out{\widetilde{\psi}_\tau(\varphibf_*)}{\widetilde{\psi}_\tau(\varphibf_*)}^{-3/2}\out{\Drm\widetilde{\psi}_\tau(\varphibf_*)[\vbf]}{\widetilde{\psi}_\tau(\varphibf_*)}N^{1/2}\\
& = \Drm \widetilde{\psi}_\tau(\varphibf_*)[\vbf] 
-\varphibf_*\out{\Drm\widetilde{\psi}_\tau(\varphibf_*)[\vbf]}{\varphibf_*}N^{-1}.
\end{align*}

We now consider the following linear eigenvalue problem: find an~eigenfunction $\vbf_{\tau,i}\in H$ and an~eigenvalue $\mu_{\tau,i}\in\mathbb{R}$ such that 
\begin{equation}\label{eq:EVP4Dpsi}
\Drm \psi_\tau(\varphibf_*)[\vbf_{\tau,i}] = \mu_{\tau,i}\vbf_{\tau,i}. 
\end{equation}
The following lemma shows that all eigenfunctions of $\Drm \psi_\tau(\varphibf_*)$ corresponding to non-zero eigenvalues belong to the tangent space $T_{\varphibf_*}\OB$.
\begin{lemma}\label{lem:eigsDpsi}
Let Assumptions \textup{\textbf{A1}} and \textup{\textbf{A2}} be fulfilled and let $\varphibf_*\in\OB$ be a ground state. Then all eigenfunctions $\vbf_{\tau,i}$ of \eqref{eq:EVP4Dpsi} corresponding to~$\mu_{\tau,i}\neq 0$ satisfy \mbox{$\vbf_{\tau,i}\in T_{\varphibf_*}\OB$}.
\end{lemma}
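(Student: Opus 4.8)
The plan is to show that the range of the linear operator $\Drm\psi_\tau(\varphibf_*)$ is already contained in the tangent space $T_{\varphibf_*}\OB$, irrespective of the input direction. Once this is in place, the eigenvalue relation \eqref{eq:EVP4Dpsi} finishes the argument immediately: for $\mu_{\tau,i}\neq 0$ the vector $\mu_{\tau,i}\vbf_{\tau,i}=\Drm\psi_\tau(\varphibf_*)[\vbf_{\tau,i}]$ lies in $T_{\varphibf_*}\OB$, and since this is a linear subspace, dividing by the nonzero scalar $\mu_{\tau,i}$ gives $\vbf_{\tau,i}\in T_{\varphibf_*}\OB$.

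The heart of the matter is therefore the claim $\out{\varphibf_*}{\Drm\psi_\tau(\varphibf_*)[\vbf]}=0_p$ for every $\vbf\in\Hsp$. To establish it, I would compare the derivative formula
\[
\Drm \psi_\tau(\varphibf_*)[\vbf] = \Drm \tilde{\psi}_\tau(\varphibf_*)[\vbf] -\varphibf_*\out{\Drm\tilde{\psi}_\tau(\varphibf_*)[\vbf]}{\varphibf_*}N^{-1}
\]
with the expression \eqref{eq:projH} for the $L^2$-orthogonal projection $\calP_{\varphibf_*,L}$. Writing $\ubf=\Drm\tilde{\psi}_\tau(\varphibf_*)[\vbf]$ and using the symmetry $\out{\ubf}{\varphibf_*}=\out{\varphibf_*}{\ubf}$ of the component-wise inner-product bracket, one sees that $\Drm \psi_\tau(\varphibf_*)[\vbf]=\calP_{\varphibf_*,L}(\ubf)$. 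Since $\calP_{\varphibf_*,L}$ is exactly the orthogonal projection onto $T_{\varphibf_*}\OB$, its range is contained in the tangent space, which yields $\out{\varphibf_*}{\Drm\psi_\tau(\varphibf_*)[\vbf]}=0_p$.

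Alternatively, the claim can be verified by a direct computation: applying $\out{\varphibf_*}{\,\cdot\,}$ to the derivative formula and using $\out{\varphibf_*}{\varphibf_*M}=NM=MN$ for any diagonal $M\in\Dp$ together with $\out{\varphibf_*}{\varphibf_*}=N$, the two diagonal contributions cancel exactly. The only thing to track is the diagonal-matrix bookkeeping and the commutativity of diagonal matrices; there is no genuine analytic difficulty, as the Fr\'echet differentiability of $\psi_\tau$ in a neighbourhood of $\varphibf_*$ has already been secured in the paragraph preceding the lemma. The substantive content is purely structural: every eaRGD step retracts a projected tangent vector, so the linearisation at a fixed point maps into the tangent space, and consequently only the eigenfunctions attached to nonzero eigenvalues are forced to lie there.
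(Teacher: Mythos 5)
Your proposal is correct and follows essentially the same route as the paper: both arguments reduce to showing that the range of $\Drm\psi_\tau(\varphibf_*)$ lies in $T_{\varphibf_*}\OB$ by verifying $\out{\Drm\psi_\tau(\varphibf_*)[\vbf]}{\varphibf_*}=0_p$ via the diagonal-matrix cancellation $\out{\varphibf_*}{\varphibf_*\Sigma N^{-1}}=N\Sigma N^{-1}=\Sigma$, and then conclude from the eigenvalue relation with $\mu_{\tau,i}\neq 0$. Your observation that the derivative formula is precisely $\calP_{\varphibf_*,L}$ applied to $\Drm\tilde\psi_\tau(\varphibf_*)[\vbf]$ is a clean repackaging of the same computation, not a different argument.
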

\begin{proof}
For all $\vbf\in H$, we have
\begin{align*}
\out{\Drm \psi_\tau(\varphibf_*)[\vbf]}{\varphibf_*} 
& = \out{\Drm \widetilde{\psi}_\tau(\varphibf_*)[\vbf]}{\varphibf_*} 
- \out{\varphibf_*\out{\Drm\widetilde{\psi}_\tau(\varphibf_*)[\vbf]}{\varphibf_*}N^{-1}}{\varphibf_*} \\
& = \out{\Drm \widetilde{\psi}_\tau(\varphibf_*)[\vbf]}{\varphibf_*} 
- \out{\varphibf_*}{\varphibf_*}\out{\Drm\widetilde{\psi}_\tau(\varphibf_*)[\vbf]}{\varphibf_*}N^{-1} = 0_p.
\end{align*}
From~\eqref{eq:EVP4Dpsi}, we hence obtain for all eigenfunctions $\vbf_{\tau,i}\in H$ of $\Drm \psi_\tau(\varphibf_*)$ that
\[
0_p 
= \out{\Drm \psi_\tau(\varphibf_*)[\vbf_{\tau,i}]}{\varphibf_*} 
= \mu_{\tau,i}\, \out{\vbf_{\tau,i}}{\varphibf_*}. 
\]
This yields $\out{\vbf_{\tau,i}}{\varphibf_*}=0$ or, equivalently, $\vbf_{\tau,i}\in T_{\varphibf_*}\OB$ whenever $\mu_{\tau,i}\neq 0$.
\end{proof}
Since zero eigenvalues are not relevant for the spectral radius of $\Drm \psi_\tau(\varphibf_*)$, we restrict the eigenvalue problem \eqref{eq:EVP4Dpsi} to $T_{\varphibf_*}\OB$. In order to estimate~$\varrho_*$, we first investigate an auxiliary linear eigenvalue problem: find \mbox{$\vbf_i\in T_{\varphibf_*}\OB$} with $\out{\vbf_i}{\vbf_i}=I_p$ and $\mu_i\in\mathbb{R}$ such that 
\begin{equation}\label{eq:auxEVP}
    \calA_{\varphibf_*}^{-1}\big(\vbf_i-\calB_{\varphibf_*}(\vbf_i,\varphibf_*\Lambda_*^{-1})\big)\, \Lambda_* 
    = \mu_i\vbf_i. 
\end{equation}
The following lemma provides estimates for the eigenvalues $\mu_i$.
\begin{lemma}\label{lem:est4mu}
    Let Assumptions \textup{\textbf{A1}} and \textup{\textbf{A2}} be fulfilled and let $\varphibf_*\in\OB$ be a ground state. Then all eigenvalues $\mu_i$ of \eqref{eq:auxEVP} are real and satisfy 
\[
    \mu_i\leq \frac{\sum_{j=1}^p \lambda_1(\calA_{\varphibf_*,j})}
{\sum_{j=1}^p \lambda_2(\calA_{\varphibf_*,j}) } <1.
\]
    If, additionally, all entries of the interaction matrix~$K$ are non-negative, then $\mu_i>-2$
    for all $i\in\mathbb{N}$.
\end{lemma}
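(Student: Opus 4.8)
The plan is to recast the auxiliary problem \eqref{eq:auxEVP} as a self-adjoint generalised eigenvalue problem governed by the Hessian of the Lagrangian, and then read off both bounds from an exact Rayleigh-type identity for $\mu_i$.

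First I would simplify \eqref{eq:auxEVP} algebraically. Applying $\calA_{\varphibf_*}$ to both sides and using that $\calA_{\varphibf_*}$ commutes with right multiplication by diagonal matrices (see~\eqref{eq:calAcomp}), so that $\calA_{\varphibf_*}(\calA_{\varphibf_*}^{-1}\xbf\,\Lambda_*)=\xbf\,\Lambda_*$, together with the cancellation $\calB_{\varphibf_*}(\vbf_i,\varphibf_*\Lambda_*^{-1})\,\Lambda_*=\calB_{\varphibf_*}(\vbf_i,\varphibf_*)$ that follows directly from the componentwise form \eqref{eq:calBij}, the equation becomes $\vbf_i\Lambda_*-\calB_{\varphibf_*}(\vbf_i,\varphibf_*)=\mu_i\,\calA_{\varphibf_*}\vbf_i$. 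Recognising the left-hand side through the second-order derivative of the Lagrangian in \eqref{eq:HessL}, this is equivalent to $\calS\vbf_i=(1-\mu_i)\calA_{\varphibf_*}\vbf_i$, where $\langle\calS\vbf,\wbf\rangle=\Drm^2_{\!\varphibf\varphibf}\calL(\varphibf_*,\Lambda_*)[\vbf,\wbf]$. Since $\calS$ is symmetric and $\calA_{\varphibf_*}$ is symmetric and coercive, this pencil is self-adjoint, which immediately yields that all $\mu_i$ are real (and, via Theorem~\ref{thm:second-orderOpt}, that $1-\mu_i>0$ for eigenvectors in $T_{\varphibf_*}\OB$).

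For the quantitative bounds I would test the simplified equation with $\vbf_i$, i.e.\ apply $\trace\out{\cdot}{\vbf_i}$. Using $\out{\vbf_i}{\vbf_i}=I_p$ and Proposition~\ref{prop:LagrEigVal} (which gives $\lambda_{*,j}=\lambda_1(\calA_{\varphibf_*,j})$ and $\varphi_{*,j}=\pm v_{j,1}$), this produces the exact identity
\[
\mu_i\, a_{\varphibf_*}(\vbf_i,\vbf_i)=\sum_{j=1}^p \lambda_1(\calA_{\varphibf_*,j})-\big\langle\calB_{\varphibf_*}(\vbf_i,\varphibf_*),\vbf_i\big\rangle.
\]
The upper bound then follows from three facts: $\langle\calB_{\varphibf_*}(\vbf_i,\varphibf_*),\vbf_i\rangle\ge 0$ by Assumption~\textbf{A2} (as in the proof of Theorem~\ref{thm:second-orderOpt}); the Courant--Fischer estimate $a_{\varphibf_*}(\vbf_i,\vbf_i)=\sum_j\langle\calA_{\varphibf_*,j}v_{i,j},v_{i,j}\rangle\ge\sum_j\lambda_2(\calA_{\varphibf_*,j})$, valid because each $v_{i,j}$ has unit $L^2$-norm and is $L^2$-orthogonal to $\varphi_{*,j}=\pm v_{j,1}$ (since $\vbf_i\in T_{\varphibf_*}\OB$); and $\sum_j\lambda_1(\calA_{\varphibf_*,j})>0$, which follows from coercivity of $a_{\varphibf_*}$ by testing with single-component frames. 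A short case distinction on the sign of the numerator then gives $\mu_i\le \sum_j\lambda_1(\calA_{\varphibf_*,j})/\sum_j\lambda_2(\calA_{\varphibf_*,j})$, and this quantity is $<1$ because the smallest eigenvalue of each $\calA_{\varphibf_*,j}$ is simple by Lemma~\ref{lem:spectralGap}, so $\lambda_1(\calA_{\varphibf_*,j})<\lambda_2(\calA_{\varphibf_*,j})$ componentwise.

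For the lower bound under elementwise non-negative $K$ I would bound the quadratic term $\langle\calB_{\varphibf_*}(\vbf_i,\varphibf_*),\vbf_i\rangle=2\int_\Omega(\varphibf_*\circ\vbf_i)K(\varphibf_*\circ\vbf_i)^T\dx$ from above, using \eqref{eq:opB}. The key pointwise estimate is $(\varphibf_*\circ\vbf_i)K(\varphibf_*\circ\vbf_i)^T\le(\varphibf_*\circ\varphibf_*)K(\vbf_i\circ\vbf_i)^T$, which I would derive from the arithmetic--geometric mean inequality $a_k b_k a_l b_l\le\tfrac12(a_k^2 b_l^2+a_l^2 b_k^2)$ applied with $a_k=\varphi_{*,k}$, $b_l=v_{i,l}$ together with the symmetry of $K$, where $\kappa_{kl}\ge 0$ is exactly what preserves the inequality direction after multiplying and summing. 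Since the gradient and potential contributions to $a_{\varphibf_*}(\vbf_i,\vbf_i)$ are non-negative, this yields $\langle\calB_{\varphibf_*}(\vbf_i,\varphibf_*),\vbf_i\rangle\le 2\,a_{\varphibf_*}(\vbf_i,\vbf_i)$, whence the identity gives $\mu_i\ge \sum_j\lambda_1(\calA_{\varphibf_*,j})/a_{\varphibf_*}(\vbf_i,\vbf_i)-2>-2$, using that here $a_{\varphibf_*}$ is coercive with constant one by Remark~\ref{re:non-negative} and that the $\lambda_1(\calA_{\varphibf_*,j})$ are strictly positive. The main obstacle I anticipate is Step~1 --- in particular the cancellation $\calB_{\varphibf_*}(\vbf_i,\varphibf_*\Lambda_*^{-1})\Lambda_*=\calB_{\varphibf_*}(\vbf_i,\varphibf_*)$ and the identification of $\calS$ with the Lagrangian Hessian --- since every later step rests on that reduction; the pointwise matrix-weighted AM--GM bound in the lower-bound step is the second place where care is needed.
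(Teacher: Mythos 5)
Your proof is correct and follows essentially the same route as the paper: applying $\calA_{\varphibf_*}$, testing with $\vbf_i$ to obtain the identity $\mu_i\,\langle\calA_{\varphibf_*}\vbf_i,\vbf_i\rangle=\sum_j\lambda_1(\calA_{\varphibf_*,j})-\langle\calB_{\varphibf_*}(\vbf_i,\varphibf_*),\vbf_i\rangle$, then using $\langle\calB_{\varphibf_*}(\vbf_i,\varphibf_*),\vbf_i\rangle\ge 0$ together with Courant--Fischer for the upper bound and the rearrangement estimate $(\varphibf_*\circ\vbf_i)K(\varphibf_*\circ\vbf_i)^T\le(\varphibf_*\circ\varphibf_*)K(\vbf_i\circ\vbf_i)^T$ for the lower bound. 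Your reformulation as the self-adjoint pencil $\calS\vbf_i=(1-\mu_i)\calA_{\varphibf_*}\vbf_i$ and the explicit AM--GM derivation of the pointwise inequality are just slightly more detailed justifications of steps the paper leaves implicit.
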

\begin{proof}
    It immediately follows from  \eqref{eq:auxEVP} that
    \[
     \mu_i\, \langle\calA_{\varphibf_*}\!\vbf_i,\vbf_i\rangle 
     = (\vbf_i\Lambda_*,\vbf_i)_{\Lsp} - \big\langle \calB_{\varphibf_*}(\vbf_i,\varphibf_*^{}\Lambda_*^{-1})\Lambda_*^{},\vbf_i\big\rangle
     = (\vbf_i\Lambda_*,\vbf_i)_{\Lsp} - \big\langle \calB_{\varphibf_*}(\vbf_i,\varphibf_*^{}),\vbf_i\big\rangle
    \]
    and, hence, the~$\mu_i$ are real. Furthermore, the Courant--Fischer theorem implies together with the orthogonality~$\out{\vbf_{i}}{\varphibf_*}=0$ that 
    \[
    \langle\calA_{\varphibf_*}\!\vbf_i,\vbf_i\rangle
    = \sum_{j=1}^p \langle\calA_{\varphibf_*,j}v_{i,j},v_{i,j}\rangle
    \geq \sum_{j=1}^p \lambda_2(\calA_{\varphibf_*,j})\, \|v_{i,j}\|_{L^2(\Omega)}^2
    =\sum_{j=1}^p \lambda_2(\calA_{\varphibf_*,j}),
    \]
    where $v_{i,j}$ denotes the $j$-th component of $\vbf_i$. Then Proposition~\ref{prop:LagrEigVal} together with \mbox{$\langle \calB_{\varphibf_*}\!(\vbf_i,\varphibf_*),\vbf_i\rangle\! \geq\! 0$} yields that
    \[
    \mu_i 
    = \frac{\big(\vbf_i\Lambda_*,\vbf_i\big)_{\Lsp}-\langle\calB_{\varphibf_*}(\vbf_i,\varphibf_*),\vbf_i\rangle}{\langle\calA_{\varphibf_*}\vbf_i,\vbf_i\rangle}
    \leq \frac{(\vbf_i,\vbf_i\Lambda_*)_{\Lsp}}{\langle\calA_{\varphibf_*}\!\vbf_i,\vbf_i\rangle}
    \leq \frac{\sum_{j=1}^p \lambda_1(\calA_{\varphibf_*,j})}
{\sum_{j=1}^p \lambda_2(\calA_{\varphibf_*,j}) } <1
    \]
due to $\lambda_1(\calA_{\varphibf_*,j})<\lambda_2(\calA_{\varphibf_*,j})$ for $j=1,\ldots,p$.
On the other hand, we can estimate
\begin{align*}
    -\mu_i 
    & \leq \frac{\langle\calB_{\varphibf_*}(\vbf_i,\varphibf_*),\vbf_i\rangle}{\langle\calA_{\varphibf_*}\!\vbf_i,\vbf_i\rangle}
    = \frac{2\int_\Omega(\varphibf_*\circ\vbf_i)K(\varphibf_*\circ\vbf_i)^T\dx}{\|\vbf_i\|_\Hsp^2+\int_\Omega(\varphibf_*\circ\varphibf_*)K(\vbf_i\circ\vbf_i)^T\dx}
    < 2.
\end{align*}
The last inequality follows from 
\begin{align*}
\int_\Omega(\varphibf_*\circ\vbf_i)K(\varphibf_*\circ\vbf_i)^T\dx
\le \int_\Omega(\varphibf_*\circ\varphibf_*)K(\vbf_i\circ\vbf_i)^T\dx,
\end{align*} 
provided that all entries of the interaction matrix~$K$ are non-negative.
\end{proof}

We are now ready to prove that the eaRGD method \eqref{eq:eaRGD} converges locally linear in $\Hsp$ to the ground state $\varphibf_*$ and to specify the convergence rate. 

\begin{theorem}[Local convergence rate]\label{th:LocConv}
Let Assumptions~\textup{\textbf{A1}} and~\textup{\textbf{A2}} be fulfilled and let $K$ be element-wise non-negative. Further, let $\varphibf_*\in\OB$ be a~ground state and the eigenvalues~$\mu_i$ of \eqref{eq:auxEVP} be ordered decreasingly in magnitude, i.e., \mbox{$|\mu_1|\geq|\mu_2|\geq\ldots\,$}. Then the spectral radius \mbox{$\varrho_*=\varrho(\Drm\psi_\tau(\varphibf_*))$} satisfies  
\begin{equation}\label{eq:specrad}
    \varrho_*<1\qquad \text{for all } 
    \tau\in\left\{\begin{array}{ll} 
    (0, \tau_*), & \text{if }\mu_1>0, \\ 
    \big(0, \frac{2}{1+|\mu_1|}\big), & \text{if }\mu_1<0,
    \end{array}\right.
\end{equation}
where 
    \[
    \tau_*
    = \frac{2\,\sum_{j=1}^p \lambda_2(\calA_{\varphibf_*,j})}{\sum_{j=1}^p \big(\lambda_1(\calA_{\varphibf_*,j})+\lambda_2(\calA_{\varphibf_*,j})\big)}
    > 1.
    \]  
    Furthermore, for every $\epsilon>0$, there exists a neighbourhood $\,\mathcal{U}_\epsilon$ of $\varphibf_*$ in $\OB$ and a positive constant $C_\epsilon$ such that for all starting functions $\varphibf_0\in\mathcal{U}_\epsilon$, the~sequence $\{\varphibf_k\}$ generated by the eaRGD method~\eqref{eq:eaRGD} with the constant step size $\tau_k=\tau$ fulfills
    \[
    \|\varphibf_k-\varphibf_*\|_{\Hsp}
    \leq C_\epsilon\, |\varrho_*+\epsilon|^k\, \|\varphibf_0-\varphibf_*\|_{\Hsp}\qquad \text{for all } k\geq 1,
    \]
    meaning that the eaRGD iteration~\eqref{eq:eaRGD} converges locally linear with rate~$\varrho_*$.    
\end{theorem}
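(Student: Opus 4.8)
The plan is to study the Fréchet derivative $\Drm\psi_\tau(\varphibf_*)$, restrict it to the tangent space, express its spectrum through the auxiliary eigenvalues $\mu_i$, read off $\varrho_*$ by an elementary sign analysis, and finally invoke Ostrowski's theorem. I would first note that the computation in the proof of Lemma~\ref{lem:eigsDpsi} gives $\out{\Drm\psi_\tau(\varphibf_*)[\vbf]}{\varphibf_*}=0$ for every $\vbf\in\Hsp$, so the image of $\Drm\psi_\tau(\varphibf_*)$ lies in $T_{\varphibf_*}\OB$; since eigenfunctions for nonzero eigenvalues lie in $T_{\varphibf_*}\OB$ and only nonzero eigenvalues affect $\varrho_*$, it suffices to analyse the restriction $M_\tau:=\Drm\psi_\tau(\varphibf_*)|_{T_{\varphibf_*}\OB}$. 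Evaluating the derivative formulas at $\varphibf_*$ for a tangent vector $\vbf$ (so $\out{\varphibf_*}{\vbf}=0$) and using $\calA_{\varphibf_*}^{-1}\varphibf_*=\varphibf_*\Lambda_*^{-1}$ together with the commutation $\calA_{\varphibf_*}(\ubf\Lambda)=(\calA_{\varphibf_*}\ubf)\Lambda$ for diagonal $\Lambda$, I expect the $\Drm\Xi$-contributions and the normalisation correction to collapse, leaving $M_\tau\vbf=(1-\tau)\vbf+\tau\,\calP_{\varphibf_*,L}(T\vbf)$, where $T\vbf=\calA_{\varphibf_*}^{-1}\big(\vbf-\calB_{\varphibf_*}(\vbf,\varphibf_*\Lambda_*^{-1})\big)\Lambda_*$ is the operator of \eqref{eq:auxEVP} and $\calP_{\varphibf_*,L}$ is the projection \eqref{eq:projH}. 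Since $\calP_{\varphibf_*,L}$ acts as the identity on $T_{\varphibf_*}\OB$, this reads $M_\tau=(1-\tau)\id+\tau S$ with $S=\calP_{\varphibf_*,L}\circ T$.

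Next I would show that $S$ is self-adjoint with respect to the inner product $a_{\varphibf_*}$ on $T_{\varphibf_*}\OB$ and has precisely the eigenvalues $\mu_i$ of \eqref{eq:auxEVP}. Testing against $\wbf\in T_{\varphibf_*}\OB$, using $\out{\varphibf_*}{\wbf}=0$, the commutation property, and the identity $\calB_{\varphibf_*}(\vbf,\varphibf_*\Lambda_*^{-1})\Lambda_*=\calB_{\varphibf_*}(\vbf,\varphibf_*)$, the projection correction drops out and one obtains $a_{\varphibf_*}(S\vbf,\wbf)=(\vbf\Lambda_*,\wbf)_\Lsp-\langle\calB_{\varphibf_*}(\vbf,\varphibf_*),\wbf\rangle$, a symmetric form whose Rayleigh quotient is exactly the one bounded in Lemma~\ref{lem:est4mu}. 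Because $\calA_{\varphibf_*}^{-1}$ is compact (via $H_0^1(\Omega)\hookrightarrow L^2(\Omega)$), $S$ is compact, so its spectrum consists of the real eigenvalues $\mu_i$ together with $0$. Hence the spectrum of $M_\tau$ is $\{1-\tau(1-\mu_i)\}_i\cup\{1-\tau\}$, all real, and
\[
\varrho_*=\max\Big(|1-\tau|,\ \sup_i\,\big|1-\tau(1-\mu_i)\big|\Big).
\]
By Lemma~\ref{lem:est4mu}, $1-\mu_i>0$ for all $i$, so each $1-\tau(1-\mu_i)<1$, and $\big|1-\tau(1-\mu_i)\big|<1$ is equivalent to $\tau<2/(1-\mu_i)$; since $1-\mu_i$ is largest for the most negative eigenvalue, the binding constraint comes from $\mu_{\min}:=\inf_i\mu_i$.

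I would then split according to the sign of the dominant eigenvalue $\mu_1$ (largest in magnitude). If $\mu_1<0$, the magnitude ordering forces $\mu_{\min}=\mu_1$, the binding bound is $\tau<2/(1-\mu_1)=2/(1+|\mu_1|)$, and the assumption $K\geq 0$ (through $\mu_i>-2$ in Lemma~\ref{lem:est4mu}) keeps this interval nonempty; together with $|1-\tau|<1$ this yields $\varrho_*<1$ on $(0,2/(1+|\mu_1|))$. If $\mu_1>0$, the ordering gives $\mu_{\min}\geq-\mu_1$, and the upper bound $\mu_1\leq\sum_j\lambda_1(\calA_{\varphibf_*,j})/\sum_j\lambda_2(\calA_{\varphibf_*,j})$ of Lemma~\ref{lem:est4mu} implies $2/(1-\mu_{\min})\geq\tau_*$ with $\tau_*=2\sum_j\lambda_2(\calA_{\varphibf_*,j})/\sum_j(\lambda_1(\calA_{\varphibf_*,j})+\lambda_2(\calA_{\varphibf_*,j}))$; thus $\varrho_*<1$ for all $\tau\in(0,\tau_*)$, and $\lambda_1(\calA_{\varphibf_*,j})<\lambda_2(\calA_{\varphibf_*,j})$ gives $\tau_*>1$. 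With $\varrho_*<1$ established and $\psi_\tau$ Fréchet differentiable near its fixed point $\varphibf_*$, Ostrowski's theorem \cite[Prop.~1]{AltHP21} then produces, for each $\epsilon>0$, a neighbourhood $\calU_\epsilon$ and constant $C_\epsilon$ with the claimed bound $\|\varphibf_k-\varphibf_*\|_\Hsp\leq C_\epsilon|\varrho_*+\epsilon|^k\|\varphibf_0-\varphibf_*\|_\Hsp$.

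The step I expect to be most delicate is the reduction in the first two paragraphs: one must verify that $T$ does \emph{not} preserve $T_{\varphibf_*}\OB$—so the projection $\calP_{\varphibf_*,L}$ in $S$ is genuinely present—while simultaneously checking that this same projection correction vanishes upon $a_{\varphibf_*}$-testing against tangent vectors, which is what makes $S$ self-adjoint and pins its eigenvalues to the $\mu_i$ of \eqref{eq:auxEVP}. Securing the exact threshold $\tau_*$ in the case $\mu_1>0$ is then a matter of combining the magnitude ordering with the two-sided eigenvalue bounds of Lemma~\ref{lem:est4mu}.
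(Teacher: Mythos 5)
Your proposal is correct and follows essentially the same route as the paper: both reduce to the restriction of $\Drm\psi_\tau(\varphibf_*)$ to $T_{\varphibf_*}\OB$, identify its nonzero eigenvalues with $1-\tau+\tau\mu_i$ via the auxiliary problem \eqref{eq:auxEVP} (the paper does this through the weak form tested against tangent vectors, you through the operator $S=\calP_{\varphibf_*,L}\circ T$ and its $a_{\varphibf_*}$-self-adjointness, which amounts to the same Rayleigh quotient used in Lemma~\ref{lem:est4mu}), and then conclude with the elementary sign analysis and Ostrowski's theorem. The only difference is that you carry out the sign analysis and the operator-theoretic bookkeeping explicitly, whereas the paper delegates these steps to the cited single-component reference.
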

\begin{proof}
    Let $\mu_{\tau,i}\neq 0$ be an eigenvalue of $\Drm\psi_\tau(\varphibf_*)$ and $\vbf_{\tau,i}\in T_{\varphibf_*}\OB$ be the corresponding eigenfunction. Then~for all $\wbf\in T_{\varphibf_*}\OB$, \eqref{eq:EVP4Dpsi} and the formulae of the directional derivatives imply 
    \begin{align*}
        \mu_{\tau,i}\, (\vbf_{\tau,i},\wbf)_{\Lsp} 
        &= \big(\Drm\psi_\tau(\varphibf_*)[\vbf_{\tau,i}],\wbf\big)_{\Lsp}
        = \big(\Drm\widetilde{\psi}_\tau(\varphibf_*)[\vbf_{\tau,i}],\wbf\big)_{\Lsp} \\
        &= (1-\tau)(\vbf_{\tau,i},\wbf)_{\Lsp} 
        +\tau\,\big(\calA_{\varphibf_*}^{-1}\big(\vbf_{\tau,i}-\calB_{\varphibf_*}(\vbf_{\tau,i},\varphibf_*\Lambda_*^{-1}))\Lambda_*,\wbf\big)_{\Lsp}.
    \end{align*} 
    This shows that $\mu_{\tau,i}\neq 0$ is an~eigenvalue of~$\Drm\psi_\tau(\varphibf_*)$ if and only if $\mu_i=(\mu_{\tau,i}-1+\tau)/\tau$ is an eigenvalue of~\eqref{eq:auxEVP}. 
    Lemma~\ref{lem:est4mu} yields that $-2<\mu_1 <1$. Then using the same arguments as in the proof of \cite[Lem.~5.8]{HenY25}, we obtain that the spectral radius of $\Drm\psi_\tau(\varphibf_*)$ defined as \mbox{$\varrho_*=\max_{i\in\mathbb{N}}|1-\tau+\tau\mu_i|$} satisfies~\eqref{eq:specrad}. Therefore, the local convergence estimate immediately follows from Ostrowski's theorem \cite[Prop.~1]{AltHP21}.~
\end{proof}
\begin{remark}
Since $\tau_*>1$ and $1+|\mu_1|<3$, the restriction for $\tau$ in \eqref{eq:specrad} can be replaced by a~stronger condition $\tau\in(0,\frac{2}{3}]$, which is much simpler to verify than~\eqref{eq:specrad}. 
\end{remark}
Despite the favorable global convergence and local linear convergence rate for the eaRGD method presented above, the convergence analysis of other Riemannian optimisation methods for multicomponent BECs remains largely unexplored, particularly concerning local quantitative convergence results. 
Previously, the preconditioned $L^2$-RGD method has been studied in \cite{SchRNB09} for Hartree--Fock and Kohn--Sham problems. More recently, the RGD method with a~canonical $H^1$-metric has been rigorously analysed in \cite{ChenLLZ24}. Additionally, the RN method has been numerically proven to exhibit local quadratic convergence for Gross--Pitaevskii and Kohn--Sham problems~\cite{AltPS24}; see also~\cite{ZhaoBJ15} for the convergence analysis of the RN scheme for the discretised simplified Kohn--Sham model. However, the generalisation of these convergence results to multicomponent BECs is non-trivial and far beyond the scope of this paper. 
%
%
\section{Finite element discretisation}\label{sec:FEM}
Due to the general infinite-dimensional formulation of our optimisation schemes, we may employ any appropriate spatial discretisation technique to discretise the minimisation problem~\eqref{eq:min} and the associated NLEVP~\eqref{eq:nlevp}. This includes (mixed) finite elements, multiscale, spectral, or pseudospectral methods~\cite{BaoC13,CanCM10,GalHLP24,HenMP14}. For illustration purposes, a~finite element discretisation with $n \in \mathbb{N}$ degrees of freedom is considered in the following, which is also used in numerical experiments in Section~\ref{sec:numerics}. Let $\Phi=[\phi_1,\ldots,\phi_p]$, $U=[u_1,\ldots,u_p]$, and $W=[w_1,\ldots,w_p]$ denote the discrete tuples with $\phi_j, u_j, w_j\in\R^n$, $j=1,\dots, p$, corresponding to the components of the \mbox{$p$-frames} $\varphibf$, $\ubf$ and~$\wbf$, respectively. Note that, here and in the following, $\phi_j$, $u_j$ and $w_j$ will always refer to the discrete vectors in $\mathbb{R}^n$ rather than their continuous counterparts in $H_0^1(\Omega)$ considered in the previous sections.

The discretised version of the minimisation problem (\ref{eq:min}) takes the form
\begin{align*}
    \min_{\Phi \in \OBn} E(\Phi),
\end{align*}
where the generalised oblique matrix manifold is given by 
\[
\OBn 
= \big\{ \Phi \in \R^{n\times p}\enskip:\enskip \ddiag(\Phi^TM\Phi) = N \big\},
\]
and the discretised energy functional reads
\[
E(\Phi) =  \sum_{j=1}^p \phi_j^T\, \Big(\tfrac{1}{2}\, S + \tfrac{1}{2}\, M_{\V_j} + \tfrac{1}{4}\, M_{\rho_j(\Phi)}\Big)\, \phi_j^{}.
\]
Here, $M$ is the $L^2$-mass matrix, $S$ is the stiffness matrix, $M_{\rho_j(\Phi)} = \sum_{i=1}^p \kappa_{ij}M_{\phi_i\phi_i}$ with the discretised density functions $\rho_j(\Phi)$, and $M_{\V_j}$ and $M_{\phi_i\phi_i}$ are weighted mass matrices, where $\phi_i\phi_i$ should be understood as the element-wise product.

The discrete version of the bilinear form $a_\varphibf$ in \eqref{eq:aphi} reads 
\[
a_\Phi(U,W) = \langle A_\Phi(U),W\rangle_M = \trace\big(W^TMA_\Phi(U)\big)
\]
with the linear operator $A_\Phi\colon\R^{n\times p}\to\R^{n\times p}$ given by
\[
A_\Phi(U) = \big[ M^{-1} A_{\Phi,1} u_1,\ldots,M^{-1}A_{\Phi,p} u_p\big],
\]
where $A_{\Phi,j} = S + M_{\V_j} + M_{\rho_j(\Phi)}$ for $j=1,\ldots,p$. Furthermore, the discretisation of the operator~$\calB_\varphibf$ in~\eqref{eq:calB} is given by
\[
B_\Phi(U,\Phi)=\bigg[M^{-1}\sum_{j=1}^p B_{\Phi,1j}\, u_j,\ \ldots,\ M^{-1}\sum_{j=1}^p B_{\Phi,pj}\, u_j\bigg]
\]
with $B_{\Phi,ij} = 2\kappa_{ij}\, M_{\phi_i\phi_j}$ for $i,j=1,\ldots,p$. 
We equip the tangent space 
\[
T_\Phi\OBn 
= \big\{ Z \in \R^{n\times p}\enskip:\enskip \ddiag(\Phi^TM Z)=0_p\big\}
\]
with a Riemannian metric which follows the product structure 
\[
g_\Phi^\times(Z,Y)=\trace(Z^TMG_\Phi^\times(Y)), \qquad
Z,Y\in T_\Phi\OBn,
\]
where the operator $G_{\Phi}^\times(Y)=[M^{-1}G_{\Phi,1}y_1,\ldots, M^{-1}G_{\Phi,p}y_p]$ acts column-wise and the matrices \mbox{$G_{\Phi,j}\in \R^{n\times n}$} are symmetric positive definite. As discussed in Section~\ref{sec:oblique}, this restriction to metrics, that act on all components independently, allows expressions that are both simple and computationally efficient. The $g_\Phi^\times$-orthogonal projection onto the tangent space $T_\Phi\OBn$ is given~by
\begin{align*}
P_{\Phi}^\times(U) 
&= U-(G_\Phi^\times)^{-1}(\Phi) \ddiag\big(\Phi^TMU\big)\big(\ddiag\big(\Phi^TM(G_\Phi^\times)^{-1}(\Phi)\big)\big)^{-1} \\ 
&= \left[u_1 - \frac{\phi_1^TMu_1}{\phi_1^TMG_{\Phi, 1}^{-1}M\phi_1}G_{\Phi,1}^{-1}M\phi_1,\ \ldots,\ 
u_p - \frac{\phi_p^TMu_p}{\phi_p^TMG_{\Phi, p}^{-1}M\phi_p}G_{\Phi,p}^{-1}M\phi_p \right].
\end{align*}
Further, the discretisation of $\calR_\varphibf$ is given by
\begin{equation*}
    R_\Phi(U) = \bigg[M^{-1}R_{\Phi,1}u_1, \ldots, M^{-1}R_{\Phi,p}u_p \bigg]
\end{equation*}
with $R_{\Phi,j} = A_{\Phi,j} - \sigma_jM$ and $\sigma_j = \frac{\phi_j^TA_{\Phi,j}\phi_j^{}}{N_j}$ for $j=1,\ldots,p$.
Using the above definitions, the corresponding Riemannian gradient of $E$ at $\Phi$ then takes the form
\begin{align*}
\grad E(\Phi)
 &= P_\Phi^\times\big((G_\Phi^\times)^{-1}(R_\Phi(\Phi))\big) 
 = \big[\,G_{\Phi,1}^{-1}(R_{\Phi,1}^{}-\theta_1\, M)\phi_1, \,\ldots\,,\, 
 G_{\Phi,p}^{-1}(R_{\Phi,p}^{} - \theta_p\,M)\phi_p\,\big],
\end{align*}
where
\[
\theta_j = \frac{\phi_j^TMG_{\Phi,j}^{-1}R_{\Phi,j}\phi_j}{\phi_j^TMG_{\Phi,j}^{-1}M\phi_j}, \qquad j=1,\ldots,p.
\]
Using $G_{\Phi,j} = M$, $G_{\Phi,j} = \widehat{A}_{\Phi,j}$ or $G_{\Phi,j}=G_{\Phi,\omega,j} = A_{\Phi,j} + B_{\Phi,jj} - \omega\sigma_jM$, we obtain the Riemannian gradients with respect to the discrete $L^2$-, energy-adaptive or Lagrangian-based metric, respectively. 
Here, $\widehat{A}_{\Phi,j} = A_{\Phi,j} + c_\Phi M$, where $c_\Phi \geq 0$ is an appropriately chosen shift. If $K$ has only non-negative entries, we can use $c_\Phi = 0$, i.e., $\widehat{A}_{\Phi,j} = A_{\Phi,j}$.
The discretisation of the Riemannian Hessian with respect to the $L^2$-metric is given by
\[
    \Hess_M E(\Phi)[Z] = M^{-1} \Big[P_{\Phi,M,1}\big(R_{\Phi,1}z_1 + \sum\limits_{j=1}^p B_{\Phi,1j}\, z_j\big), \ldots, P_{\Phi,M,p}\big(R_{\Phi,p}z_p + \sum\limits_{j=1}^p B_{\Phi,pj}\, z_j\big) \Big]
\]
with the projectors $P_{\Phi,M,j}=I - \frac{1}{N_j}M\phi_j^{}\phi_j^T$.
These expressions can be used to construct the discrete counterparts of the Riemannian optimisation schemes presented in Section~\ref{sec:methods}.

It should be noted that the global convergence results for the eaRGD method no longer hold in the finite-dimensional case because our particular choice of the finite element discretisation does not satisfy a~discrete maximum principle. Achieving this would require a~slightly modified, stabilised scheme as proposed in \cite{hauck2024positivitypreservingfiniteelement}. However, in practice, unless specifically triggered, a~violation of the maximum principle and potential convergence to an excited state from a~positive initial guess will typically not be observed. We also refer to \cite{ChenLLZ24a}, for recent convergence results of the RGD method based on the $H^1$-metric for a~discretised single-component BEC.

%
\section{Numerical experiments}\label{sec:numerics}
We report on numerical experiments for two test examples in 1D and 2D domains, aiming at comparing the performance of the different Riemannian optimisation schemes presented in Section~\ref{sec:methods}. For the spatial discretisation, we use the finite element method with bi-quadratic elements on a quadrilateral mesh of width $h$. We impose Neumann boundary conditions instead of the Dirichlet boundary conditions discussed above, but due to the strong trapping potentials in the models, all values on the boundary are sufficiently close to zero in all experiments.
The implementation was done in the Julia programming language using the package {\tt Ferrite.jl} for the finite element discretisation. The source code is available in the GitHub repository
\url{https://github.com/MaHermann/Riemannian-coupledGPE}
and relies on the following parameter and design choices:
\begin{itemize}
    \item Stopping criterion: We terminate the iterations once the norm of the residual \linebreak
    \mbox{${\rm res}_k=(\trace(R_{\Phi_k}(\Phi_k)^TMR_{\Phi_k}(\Phi_k)))^{1/2}$}
    falls below the tolerance $\tol=10^{-8}$.

    \item Initial guess: For all schemes, we use a consistent initial guess $\Phi_0$ computed by the alternating eaRGD method, where each component is initialised by $1$. Iterations continue until the residual norm, as defined by the stopping criterion, falls below $10^{-2}$ in the 1D case and $10^{-4}$ in the 2D case.

    \item Step size: To maintain transparency in the comparison between methods, we use a constant step size of $\tau_k = 1$ for all RGD methods presented here. This approach allows a~clear evaluation of the inherent convergence properties of each method, without introducing variability due to the interplay between method choice and adaptive step size strategies. In particular, all methods show stable convergence without the need for smaller step sizes, except for the 2D problem with random potential (see below). The use of an adaptive step size strategy, such as a non-monotone line search combined with the alternating Barzilai--Borwein step size strategy \cite{WenY13, ZhaH04}, could further improve the optimisation performance, but we leave this method-specific refinement for future work.
    
    \item Linear system solvers: For solving linear systems in Riemannian gradient and Newton direction calculations, we employ the preconditioned conjugate gradient method with the preconditioner determined from the ILU decomposition of $A_{0,j}=S+M_{V_j}$ and an~adaptive residual tolerance $\tol_{\rm CG}$ for the 1D system and $10 \tol_{CG}$ for the 2D system, where the latter has a weaker nonlinearity, allowing for less accurate solutions. Here, $\tol_{\rm CG}={\rm res}_k$ as defined above for the Newton-type methods, and the residual norm for the individual components for the RGD methods. In the computation of the initial guess $\Phi_0$ by using the alternating eaRGD method, we take a~tolerance of $1.5 \cdot 10^{-8} \tol_{\rm CG}$ in the linear systems.
    
    \item Regularisation parameters: In the LgrRGD  method, we take $\omega_k=1$, as discussed in Section~\ref{ssec:LgrRGD}. In the regRN method, we use $\omega_k=0.99$, which proves to be enough to provide convergence in the cases where the RN method (which corresponds to $\omega_k = 1$) does not converge.
\end{itemize}

Although the most canonical performance measure is probably the number of outer iterations to a~given stopping criterion, in practice, other measures may play a more important role. For example, a Newton step takes on average much longer than a gradient step, so it makes sense to also compare the total CPU time required. Since the computational time depends to a large extent on the implementation and the hardware used, we also report the average number of matrix--vector multiplications with $n \times n$ matrices per optimisation step, which, together with the assembly of the finite element matrices discussed below, are the most expensive operations in all optimisation schemes. Together, these performance measures provide sufficient insight into the individual strengths of the different optimisation methods. 

Each optimisation step requires the assembly of the finite element matrices $M_{\phi_i\phi_j}$. In the eaRGD and LgrRGD methods, only the $p$ matrices $M_{\phi_i\phi_i}$ are needed, while the RN and regRN methods require all $p(p + 1) / 2$ combinations to compute the (regularised) Riemannian Hessian. In the cases below with~$p = 2, 3$, this amounts to a~factor of at most two, but it should be taken into account when applying the optimisation schemes to models with a large number of components.

Note that in both examples below, the interaction matrix $K$ has non-negative entries, so by Remark~\ref{re:non-negative} there is no need to shift $A_\Phi$ to ensure its invertibility. 
%
%
\subsection{Two-component BEC in 1D}
First, we consider a well-known benchmark example from the literature, see, e.g., \cite{BaoC11,HuaY24}. It describes a two-component BEC on $\Omega = [-16, 16]$ with the potentials 
\[
    V_1(x) 
    = V_2(x)
    = 2\, \left(\frac{1}{2}\,x^2 + 24\cos^2(x)\right)
\]
and the interaction parameters $\kappa_{11} = 2.08\, \beta$, $\kappa_{22} = 1.94\, \beta$ and $\kappa_{12} = \kappa_{21} = 2\, \beta$, where different values for $\beta > 0$ can be chosen. By renormalisation, we can assume, without loss of gene\-ra\-li\-ty, that \mbox{$N_1+N_2=1$}, so we take \mbox{$N_1 = \alpha$} and $N_2 = 1 - \alpha$ with $\alpha \in (0, 1)$. The differences in parameters compared to the earlier works result from the choice of constants in the energy and the fact that the interaction matrix~$K$ used there is not positive definite and therefore does not fit in the setting of this paper, so we use a~slightly larger $\kappa_{11}$. 
In the following, we fix $\alpha = 0.8$ and perform experiments for $\beta = 10, 100, 1000$ to examine how the strength of the nonlinearity affects the individual optimisation methods. 

\begin{figure}[t]
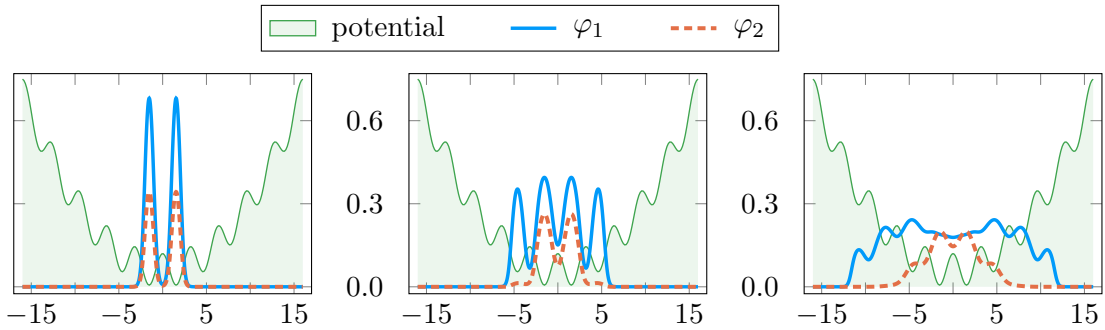

\hspace*{-5mm}
\input{img/results/1D/ground_state/beta=10.tex}
\input{img/results/1D/ground_state/beta=100.tex}
\input{img/results/1D/ground_state/beta=1000.tex}
\vspace{-5mm}
\caption{Two-component BEC: potential and components of the ground state for $\beta = 10, 100, 1000$ (from left to right). The potential is rescaled by a~factor of $0.0025$ for plotting purposes.}
\label{fig:groundstates1D}
\end{figure}

We choose a finite element mesh width of $h = 32 \cdot 2^{-10}$, resulting in $n=2049$ degrees of freedom. The initialisation with the alternating eaRGD method takes $5$, $9$, and $17$ iterations for $\beta = 10, 100$, and $1000$, respectively. Figure~\ref{fig:groundstates1D} presents the potential and the resulting ground state components. The convergence history of the residual norms for different optimisation methods and different values of $\beta$ is shown in Figure~\ref{fig:convergence1D}. To illustrate the advantage of the alternating variant of the LgrRGD scheme, in addition to the convergence history for the alternating versions, we also include the residual plots for the non-alternating schemes for the case $\beta = 100$. Further details on the number of outer iterations and the average number of matrix--vector multiplications in an optimisation step are given in Table~\ref{tab:twoBECresults}. We do not provide time measurements for these one-dimensional experiments, as they are all in the order of seconds, where precompilation of the Julia code and other factors such as implementation details and processor load dominate. 

\begin{figure}[t]
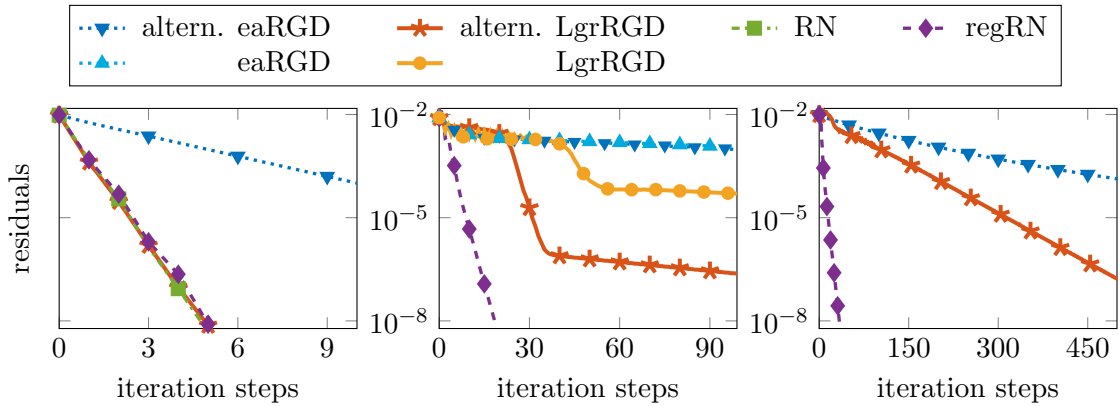

\hspace*{-2mm}
\begin{tikzpicture}

\begin{axis}[
width=55.0mm, height=45.0mm, 
at={(0.0mm,0.0mm)},
xlabel={iteration steps}, 
xmin=0, xmax=10, xticklabels={$0$,$3$,$6$,$9$}, xtick={0.0,3.0,6.0,9.0}, 
ylabel={residuals}, 
ymode=log, log basis y=10, 
ymin= 6e-9, ymax=0.015, 
yticklabels=\empty, ytick={1e-8, 1e-5, 1e-2}, 
legend columns = 4,
legend style={legend cell align=right, align=left, at={(1.7,1.1)}, anchor=south}
]
\addplot[color=mycolor1, line width=1.25, dotted, mark=triangle*, mark size=2.2pt, mark repeat=3, mark options={solid, mycolor1,rotate=180}]
    table[row sep={\\}]{
            0.0  0.009570379644680443  \\
            1.0  0.005763851757907287  \\
            2.0  0.003670515808875858  \\
            3.0  0.002354401262862156  \\
            4.0  0.0015035034249445212  \\
            5.0  0.0009586935305191378  \\
            6.0  0.0006111898612796092  \\
            7.0  0.0003896681042389785  \\
            8.0  0.0002484626307142471  \\
            9.0  0.00015844637156441776  \\
            10.0  0.00010105560618660883  \\
    };
\addlegendentry {\ altern. eaRGD\qquad}

\addplot[color=mycolor2, line width=1.5, solid, mark=star, mark size=3.5pt, mark repeat=1.5, mark options={solid, mycolor2}]
        table[row sep={\\}]{
            0.0  0.009570379644680443  \\
            1.0  0.00041216949470411924  \\
            2.0  2.9647795011433045e-5  \\
            3.0  1.5696791841558317e-6  \\
            4.0  9.40905430439428e-8  \\
            5.0  7.466262240413793e-9  \\
        };
\addlegendentry {\ altern. LgrRGD\qquad}

\addplot[color=mycolor5, line width=1.25, dashdotted, mark=square*, mark size=2.2pt, mark repeat=2, mark options={solid, mycolor5}]
    table[row sep={\\}]{
            0.0  0.009570379644680443  \\
            1.0  0.00047033295729495333  \\
            2.0  3.512926935766896e-5  \\
            3.0  1.40858588529358e-6  \\
            4.0  8.618384583768708e-8  \\
            5.0  4.797352179367817e-9  \\
    };
\addlegendentry {\ RN\qquad}

\addplot[color=mycolor4, line width=1.25, dashed, mark=diamond*, mark size=2.7pt, mark repeat=1.2, mark options={solid, mycolor4}]
        table[row sep={\\}]{
            0.0  0.009570379644680443  \\
            1.0  0.0004806866135227447  \\
            2.0  4.810225176500962e-5  \\
            3.0  1.991117609142943e-6  \\
            4.0  2.2701323161646077e-7  \\
            5.0  8.026587675744231e-9  \\
        };
\addlegendentry {\ regRN}
\addplot[color=mycolor1_, line width=1.25, dotted, mark=triangle*, mark size=2.2pt, mark repeat=8, mark options={solid, mycolor1_}]
        table[row sep={\\}]{
            -1.0  1e-10 \\
        };
\addlegendentry {\ eaRGD\qquad}
        \addplot[color=mycolor2_, line width=1.5, solid, mark=*, mark size=1.8pt, mark repeat=8, mark options={solid, mycolor2_}]
        table[row sep={\\}]{
            -1.0  1e-10 \\
        };
\addlegendentry {\ LgrRGD\qquad}
\end{axis}
\input{img/results/1D/convergence/beta=100.tex}
\input{img/results/1D/convergence/beta=1000.tex}
\caption{Two-component BEC: convergence history of the residuals for \mbox{$\beta = 10, 100, 1000$} (from left to right). For $\beta = 100$, the non-alternating versions of the eaRGD and LgrRGD methods are shown as well.}
\label{fig:convergence1D}
\end{figure}

\begin{table}[t]
\caption{Two-component BEC: performance measures for different optimisation methods
\label{tab:twoBECresults}}
\footnotesize
\begin{tabular}{l || r | c || r | c || r | c }
& \begin{tabular}{c}outer\\[-1mm]iter.\end{tabular} & \begin{tabular}{c}aver.~matr.--vec.\\[-1mm]mult.~per iter.\end{tabular} & \begin{tabular}{c}outer\\[-1mm] iter.\end{tabular} & \begin{tabular}{c}matr.--vec.~mult.\\[-1mm]
per iter.\end{tabular} & \begin{tabular}{c}outer\\[-1mm] iter.\end{tabular} & \begin{tabular}{c}matr.--vec.~mult.\\[-1mm]per iter.\end{tabular} \\
\hline
       & \multicolumn{2}{ c ||}{ $\beta = 10$} & \multicolumn{2}{ c ||}{ $\beta = 100$} & \multicolumn{2}{ c }{ $\beta = 1000$} \\
\hline
altern. eaRGD  &  31\hspace*{2mm} & 12.1  &  1147\hspace*{1mm} & \hspace*{-1.8mm}14.0 & 2225\hspace*{1mm} & 20.3 \\ 
altern. LgrRGD &   5\hspace*{2mm} & 30.4  &   259\hspace*{1mm} & \hspace*{-1.8mm}60.0 &  629\hspace*{1mm} & \hspace*{-1.8mm}106.6 \\
\hspace*{9.5mm} RN & 5\hspace*{2mm} & 20.0  &  - \hspace*{1mm} &  -    & -\hspace*{2mm}  &     - \\
\hspace*{9.5mm} regRN &5\hspace*{2mm}& 20.0  &   19\hspace*{1mm} & \hspace*{-1.8mm}53.3 & 34\hspace*{1mm}   & \hspace*{-1.8mm}131.9 \\
\end{tabular}
\end{table}

Figure~\ref{fig:convergence1D} and Table~\ref{tab:twoBECresults} show that the regRN method converges in significantly fewer iterations than the alternating eaRGD method for all three parameter values of $\beta$ and that the alternating LgrRGD method is competitive to both the RN and regRN methods for $\beta=10$. For strong interactions ($\beta = 100, 1000$), the RN method does not converge to the ground state, as the initial value is too far away. This could be fixed by more initial iterations with the reliable eaRGD method or by regularisation with the regRN method. 
%
%
\subsection{Three-component BEC in 2D}
As a second example, we consider a~three-compo\-nent BEC model on the unit square $\Omega = [0,1]^2$, uniformly partitioned with mesh width $h = 2^{-10}$ in each direction, resulting in $n = 4\,198\,401$ degrees of freedom, a~much larger problem size than in the previous 1D example. The interaction parameters are $\kappa_{11} = 0.5$, $\kappa_{22} = 5$, $\kappa_{33} = 10$, and $\kappa_{ij} = 1$ otherwise, while the number of particles is $N_1 = N_2 = N_3 = 1$. 

\begin{figure}[t]
	\centering
	\begin{subfigure}[b]{0.24\textwidth}
		\centering
		\includegraphics[width=\textwidth]{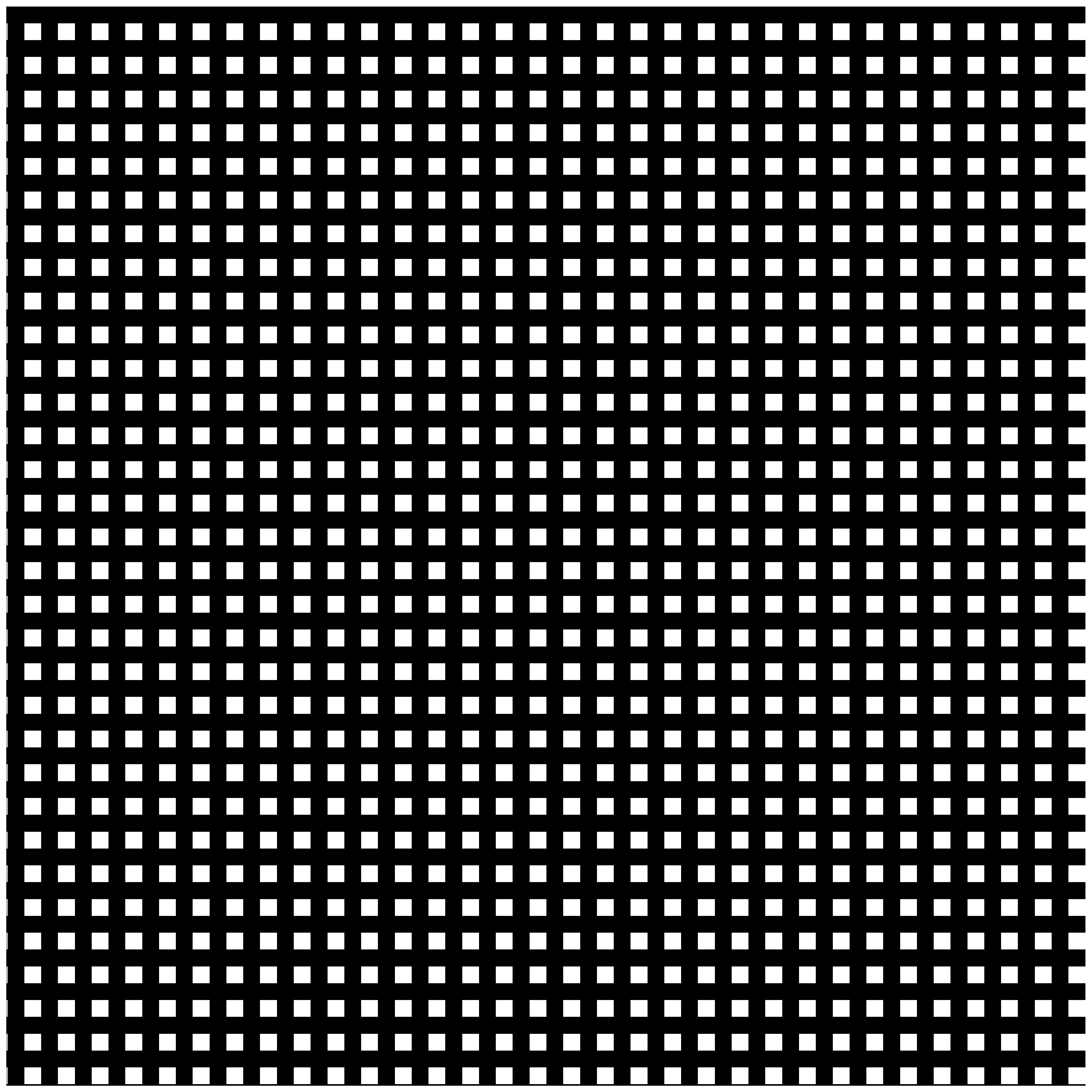}
		\caption{periodic potential}
	\end{subfigure}
	\begin{subfigure}[b]{0.24\textwidth}
		\centering
		\includegraphics[width=\textwidth]{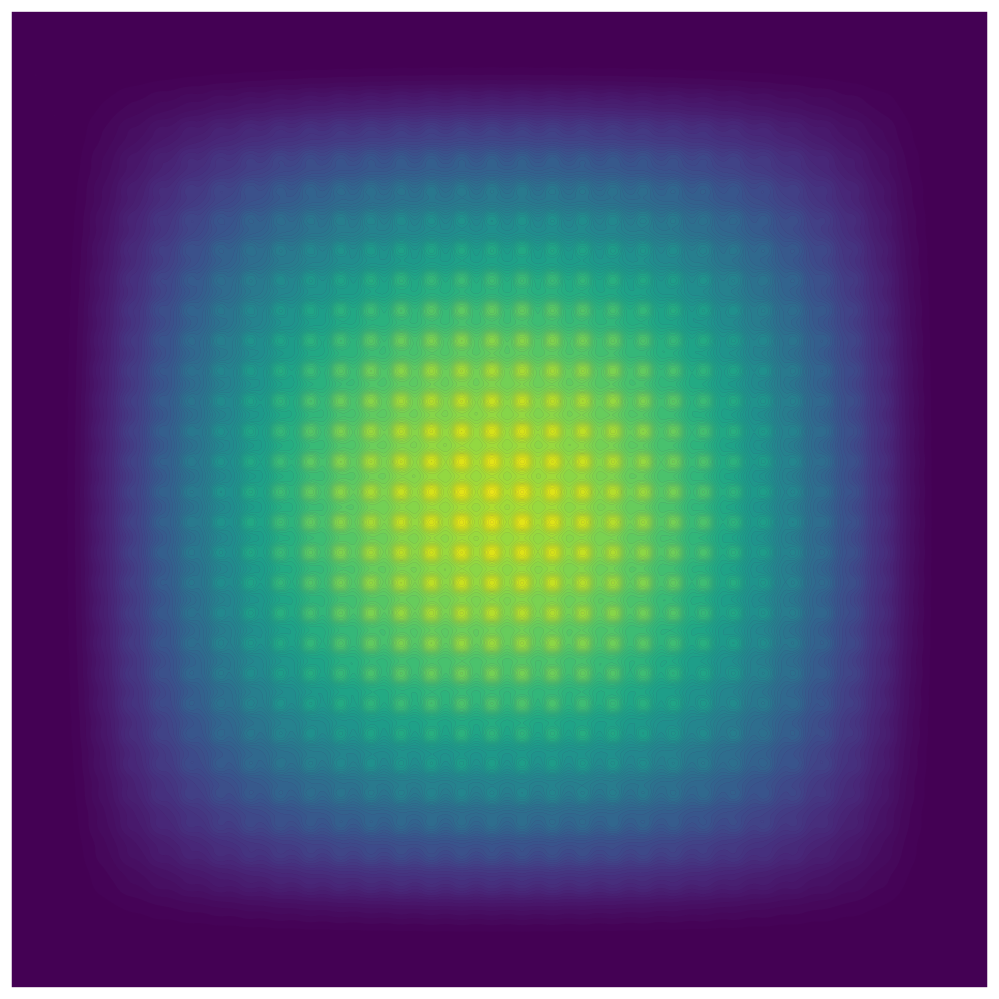}
		\caption{$\varphibf_1$}
	\end{subfigure}
	\begin{subfigure}[b]{0.24\textwidth}
		\centering
		\includegraphics[width=\textwidth]{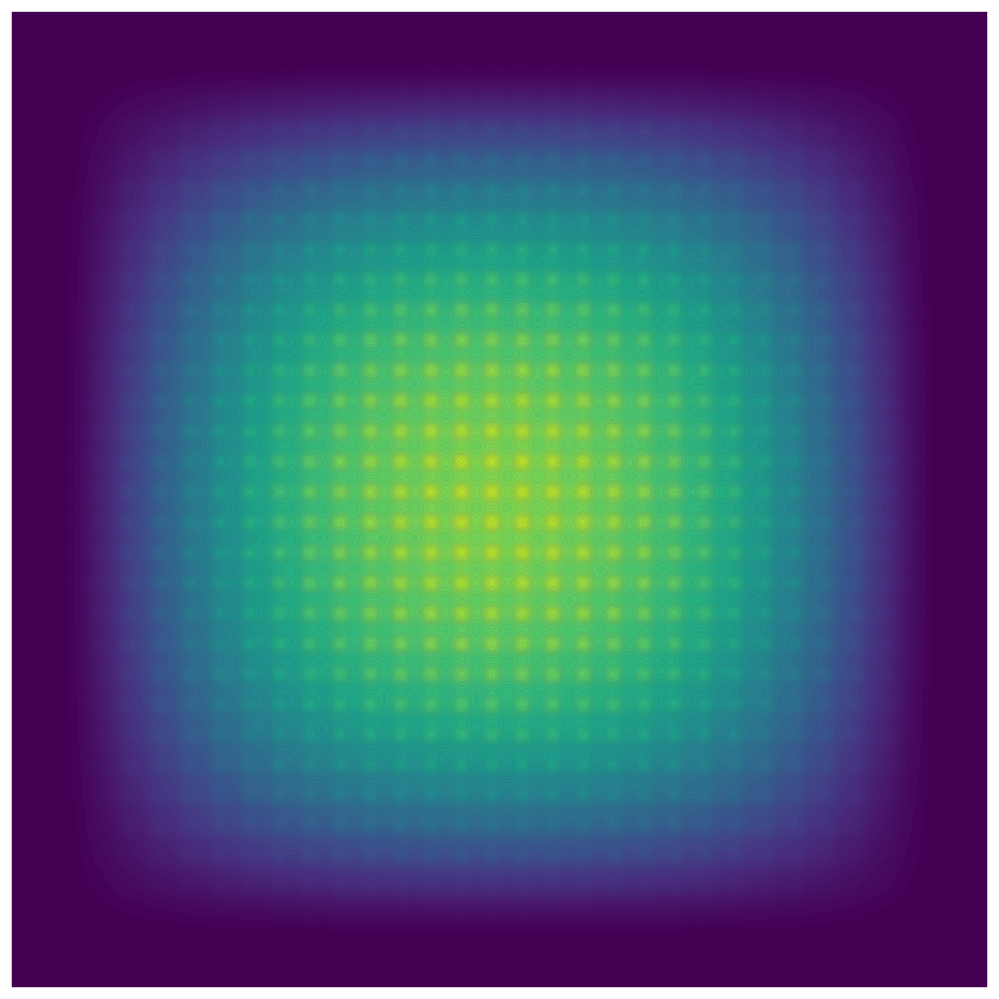}
		\caption{$\varphibf_2$}
	\end{subfigure}
	\begin{subfigure}[b]{0.24\textwidth}
		\centering
		\includegraphics[width=\textwidth]{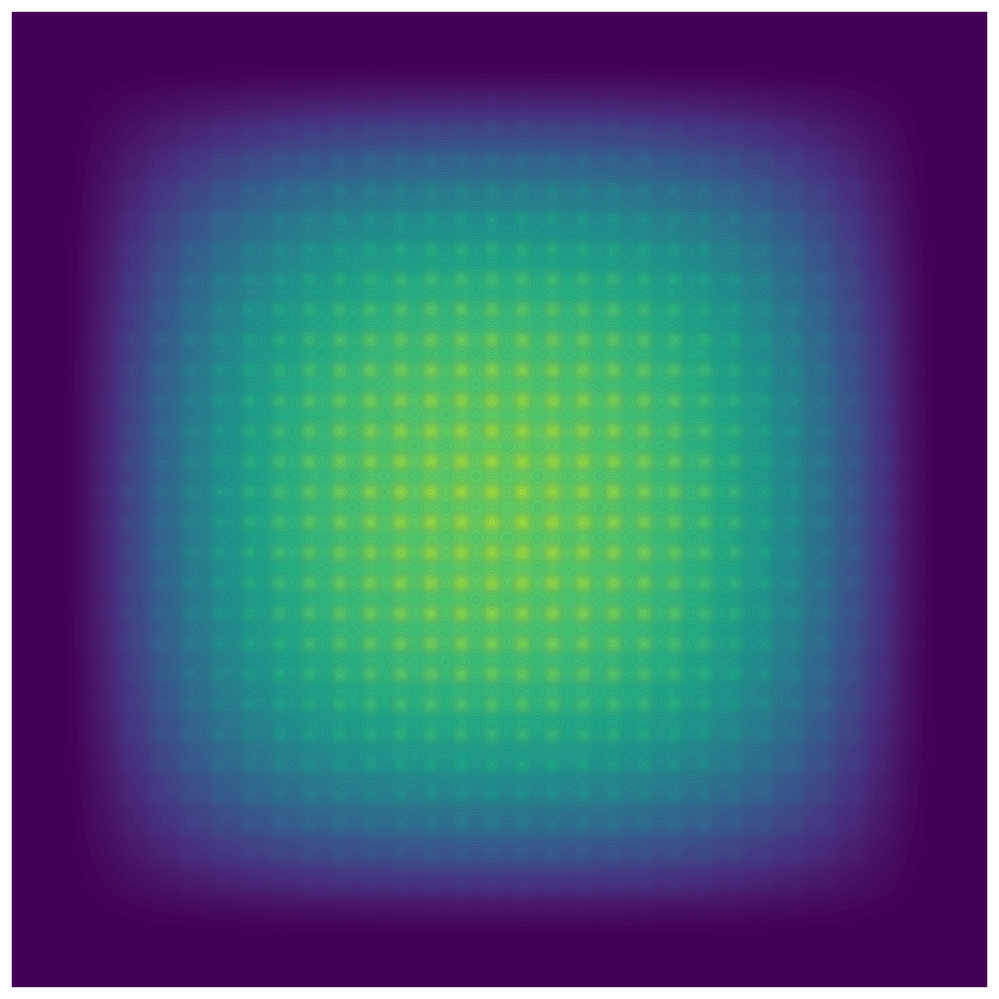}
		\caption{$\varphibf_3$}
	\end{subfigure} \\[0.5em]
	\begin{subfigure}[b]{0.24\textwidth}
		\centering
		\includegraphics[width=\textwidth]{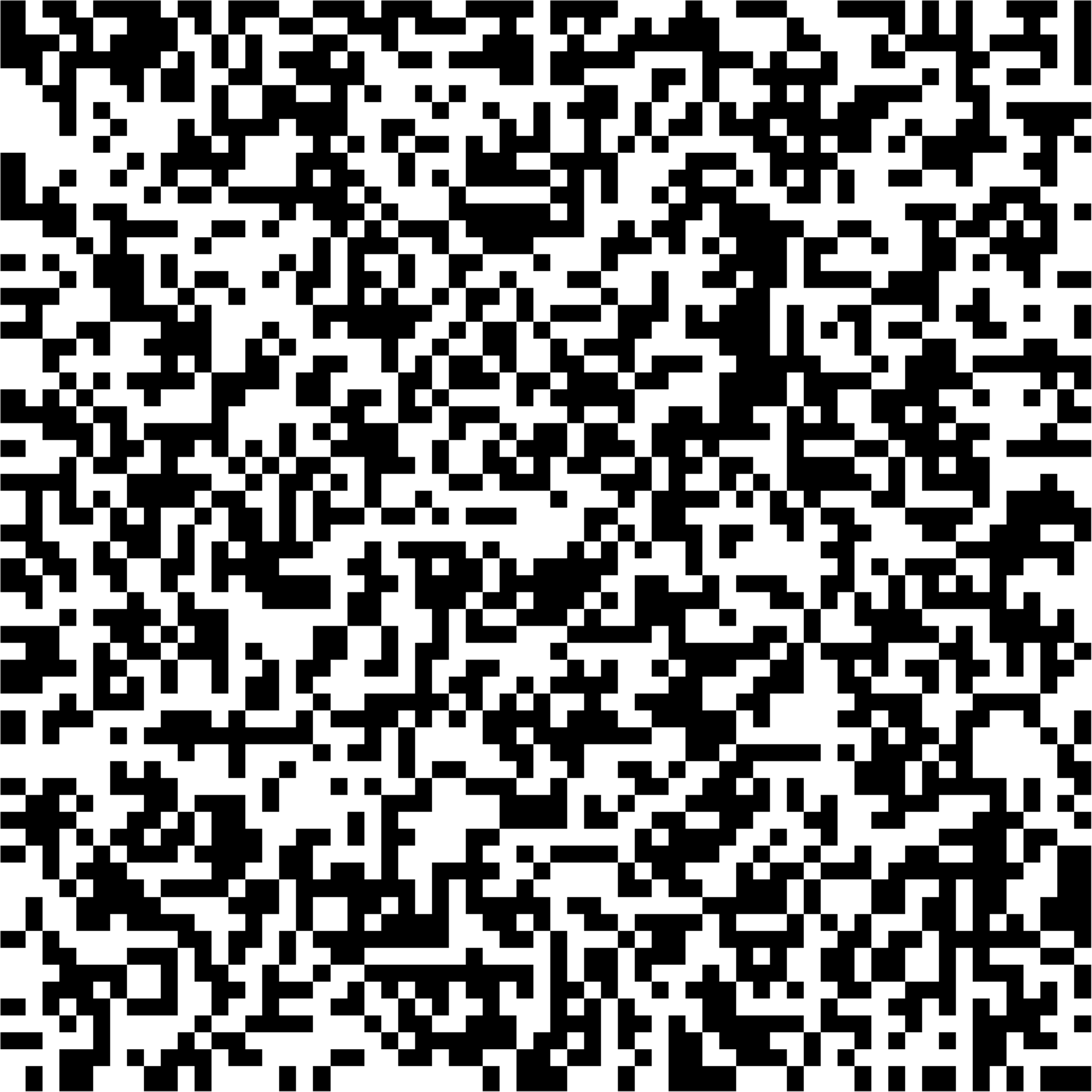}
		\caption{random potential}
	\end{subfigure}
	\begin{subfigure}[b]{0.24\textwidth}
		\centering
		\includegraphics[width=\textwidth]{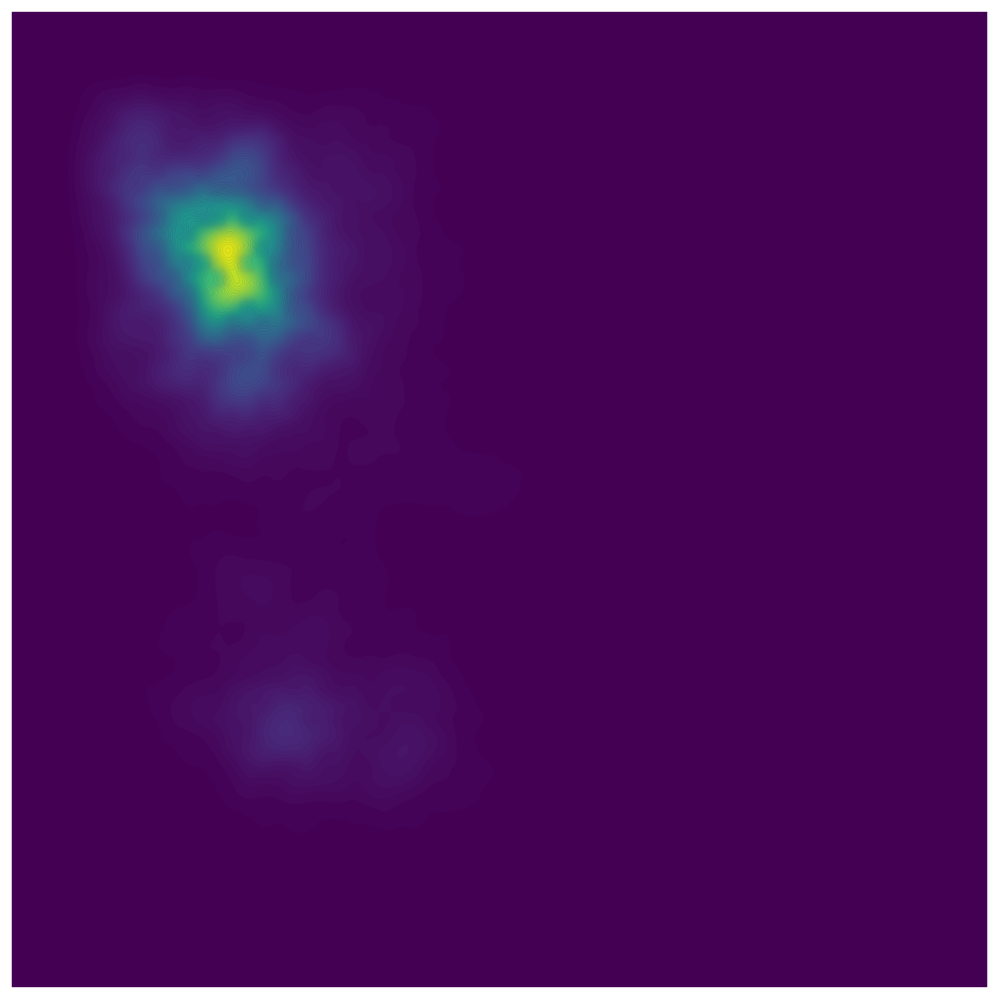}
		\caption{$\varphibf_1$}
	\end{subfigure}
	\begin{subfigure}[b]{0.24\textwidth}
		\centering
		\includegraphics[width=\textwidth]{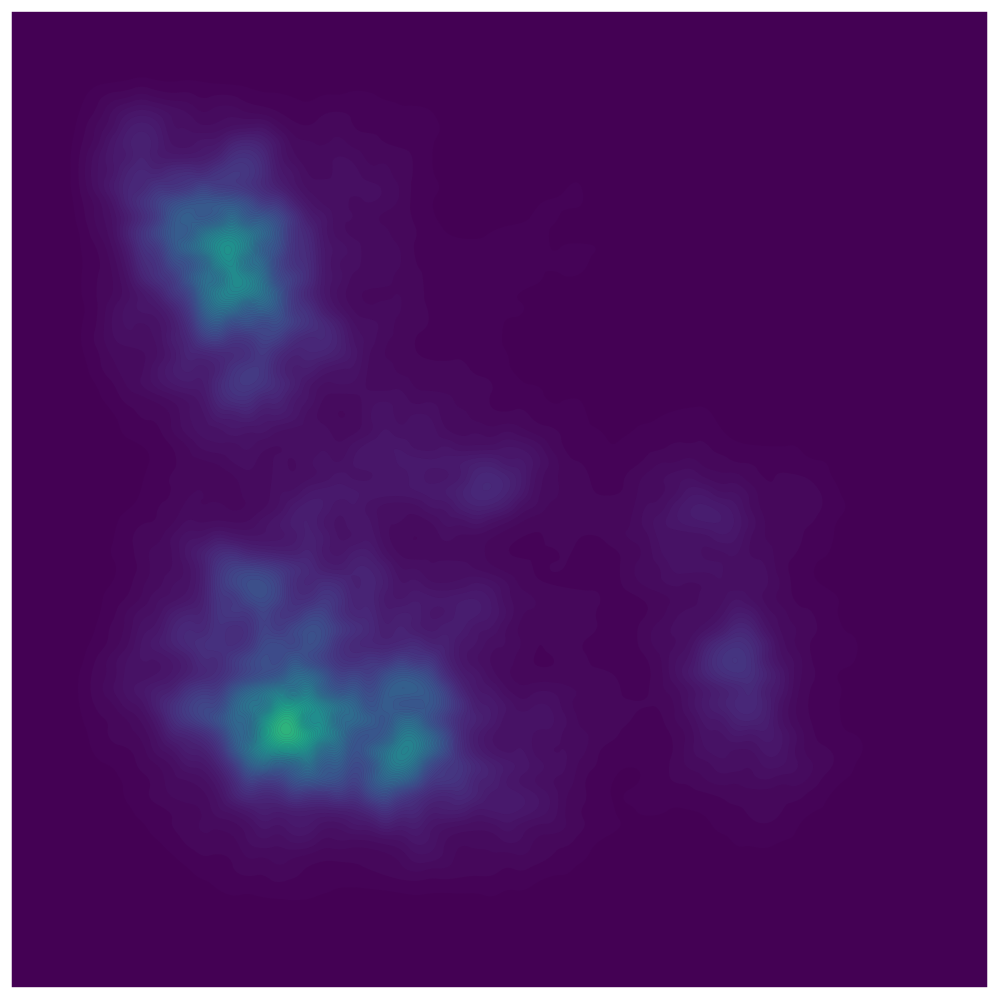}
		\caption{$\varphibf_2$}
	\end{subfigure}
	\begin{subfigure}[b]{0.24\textwidth}
		\centering
		\includegraphics[width=\textwidth]{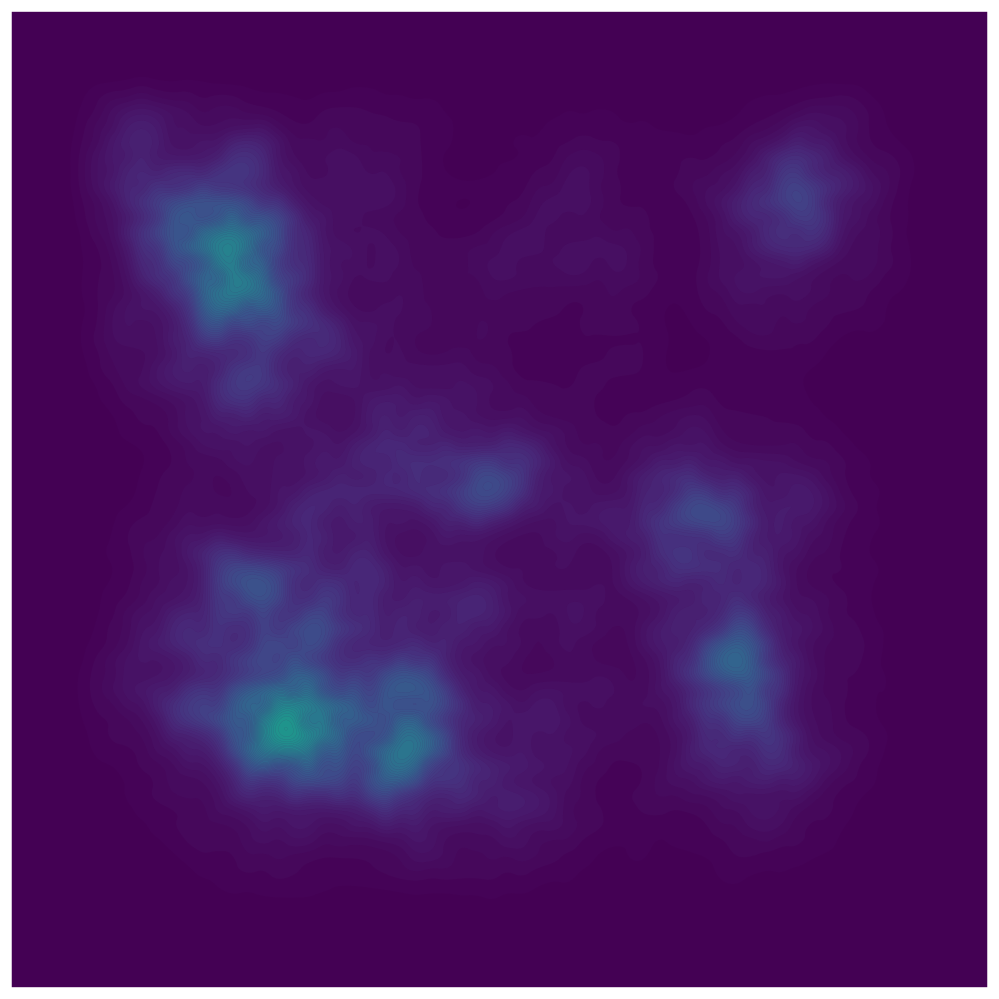}
		\caption{$\varphibf_3$}
	\end{subfigure}
	\vspace*{-3mm}
	\caption{Three-component BEC: random and periodic potentials and components of the ground states computed with the alternating eaRGD method. For the potentials, we do not depict the additional trapping potential that is used to enforce the homogeneous Dirichlet boundary conditions.}
	\label{fig:2D_states}
\end{figure}

We consider two cases: a~periodic potential and a~piecewise random potential, both taking values in $\{0, 2^{12}\}$ and varying on a length scale of $\epsilon = 2^{-6}$. In both cases, to enforce small values on the boundary, we additionally add a trapping potential 
\[
V_\text{trap}(x_1, x_2) = 10^6 \max\{(2x_1 - 1)^{40}, (2x_2 - 1)^{40}\}.
\]
The initialisation takes $3$ and $4$~eaRGD steps for the periodic and random potentials, respectively.

In our experiments, we observed that for the periodic potential, all optimisation schemes converge to the same ground state. For the random potential, however, only the eaRGD method reliably provides the ground state, while the other schemes find excited states with higher energy, even with the relatively small initialisation residual of $10^{-4}$. 

Comparing the computed ground states for the two models shown in Figure~\ref{fig:2D_states}, we see that for the periodic potential, the ground state remains spread over the entire domain, while for the random potential, the condensate undergoes localisation, see \cite{AltHP20,AltHP22,AltP19} for related results for single-component BECs. These works also justify that both cases are extremely computationally challenging, as the spectral gaps in the lower energy spectrum are very small and scale like $\epsilon$, which explains the need for very accurate initial estimates to avoid approaching excited states of similar energy levels. 

The performance measures including the computational time together with the final energy values and residual norms for the convergent methods are collected in Table~\ref{tab:threeBECresults}.  It can be seen that for the periodic potential, the alternating LgrRGD, RN, and regRN methods, which are based on the second-order information, significantly outperform the alternating eaRGD method both in terms of the number of outer iterations and the computational time. For the random potential, as reported above, only the alternating eaRGD method provides the ground state, but  requires a~prohibitively large number of iterations to achieve small target residuals and is computationally very expensive. Therefore, a~combination of the eaRGD and RN methods, exploiting both the global convergence guarantees of the former and the fast local convergence speed of the latter, seems to be a promising approach to compute ground states in an efficient way. In addition, the development of better preconditioners to reduce the large number of inner iterations and suitable step size control strategies to speed up convergence could further improve the computational performance of the optimisation algorithms.

\begin{table}[t]
\footnotesize
\caption{Three-component BEC: performance measures for different optimisation methods
\label{tab:threeBECresults}}
\begin{tabular}{l || r | r | r | c | c}
    &\begin{tabular}{c}outer\\[-1mm]iter.\end{tabular}   
    &\begin{tabular}{c}aver.~matr.--vec.\\[-1mm]mult.~per iter.\end{tabular}
    &\begin{tabular}{c}CPU time\\[-1mm] (minutes)\end{tabular} & energy & residual \\
\hline
    & \multicolumn{5}{ c }{ periodic potential }\\
\hline
altern. eaRGD  & 206\hspace*{2mm}   &   42.8\hspace*{6mm} & 87\hspace*{4mm}
& \hspace*{3mm}4582.2\hspace*{3mm} & \hspace*{3mm}9.9017e-09\hspace*{3mm}\\
altern. LgrRGD &   4\hspace*{2mm}   &  665.2\hspace*{6mm} & 24\hspace*{4mm} & \hspace*{3mm}4582.2\hspace*{3mm} & \hspace*{3mm}4.3065e-10\hspace*{3mm}\\
\hspace*{7.3mm} RN     &   4\hspace*{2mm}   & 1249.5\hspace*{6mm} & 25\hspace*{4mm} &\hspace*{3mm}4582.2\hspace*{3mm} & \hspace*{3mm}6.3026e-11\hspace*{3mm}\\
\hspace*{7.3mm} regRN  &   5\hspace*{2mm}   & 954.6\hspace*{6mm} & 25\hspace*{4mm}  & \hspace*{3mm}4582.2\hspace*{3mm}& \hspace*{3mm}7.5963e-09\hspace*{3mm}\\
\hline
 & \multicolumn{5}{ c }{ random potential } \\\hline
 altern. eaRGD  & 1503\hspace*{1mm}  &   63.5\hspace*{6mm} & 773\hspace*{4mm} & \hspace*{3mm}2332.1\hspace*{3mm}& \hspace*{3mm}9.9942e-09\hspace*{3mm}
\end{tabular}
\end{table}
%
%
%
\section{Conclusion}
In this paper, we have introduced a general framework for computing the ground state of multi-component Bose--Einstein condensates using Riemannian optimisation methods on the infinite-dimensional generalised oblique manifold. This problem is formulated as an energy minimisation problem with mass conservation constraints for each condensate component. We have established existence and uniqueness results for the ground state on a bounded spatial domain and linked it to the coupled Gross--Pitaevskii eigenvalue problem. In addition, we have explored the Riemannian structure of the generalised oblique manifold by defining several Riemannian metrics and computing essential geometric tools such as Riemannian gradients and Hessians.

By incorporating first- and second-order information of the energy functional, we have constructed appropriate metrics that allow preconditioning within Riemannian optimisation, which, combined with improvements through alternating iterations, significantly enhances optimisation performance. Our qualitative global and quantitative local convergence results for the energy-adaptive Riemannian gradient descent method provide a robust basis for further exploration of complex multicomponent Bose--Einstein condensation phenomena in quantum systems.
\section*{Acknowledgments}
The authors would like to thank Jonas P\"uschel for fruitful and constructive discussions on important implementation aspects. 
%
%
\bibliographystyle{siamplain}
\bibliography{refBEC} 
\end{document}